\newtheorem{theorem}{Theorem}[section]%[section]
\newtheorem{lemma}[theorem]{Lemma}%[section]
\newtheorem{proposition}[theorem]{Proposition}%[section]
\newtheorem{definition}[theorem]{Definition}%[section]
\newtheorem{corollary}[theorem]{Corollary}%[section]
\newtheorem{remark}[theorem]{Remark}
\def\om{\omega}
\def\Om{\Omega}
\def\p{\partial}
\def\ep{\epsilon}
\def\de{\delta}
\def\S{{\Sigma}}
\def\<{\langle}
\def\>{\rangle}
\def\div{{\rm div}}
\def\na{\nabla}
\providecommand{\abs}[1]{\lvert#1\rvert}
\providecommand{\Abs}[1]{\left\lvert#1\right\rvert}
\newcommand{\mbN}{\mathbb{N}}
\newcommand{\mbP}{\mathbb{P}}
\newcommand{\mbS}{\mathbf{S}}
\newcommand{\mcB}{\mathcal{B}}
\newcommand{\mcC}{\mathcal{C}}
\newcommand{\mcH}{\mathcal{H}}
\newcommand{\mcT}{\mathcal{T}}
\newcommand{\mfQ}{\mathbf{Q}}
\newcommand{\mfW}{\mathbf{W}}
\newcommand{\msB}{\mathscr{B}}
\newcommand{\msH}{\mathscr{H}}
\newcommand{\msJ}{\mathscr{J}}
\newcommand{\msP}{\mathscr{P}}
\newcommand{\msU}{\mathscr{U}}
\newcommand{\rd}{{\rm d}}
\newcommand{\ra}{\rightarrow}
\newcommand{\eq}[1]{\begin{equation}\begin{alignedat}{2} #1 \end{alignedat}\end{equation}}
\numberwithin{equation}{section}
\begin{document}
	
\title[Liouville theorem]{Half-space Liouville-type theorems for minimal graphs with capillary boundary}

\date{\today}

%-----
\author[Wang]{Guofang Wang}
\address[G.W]{Mathematisches Institut\\
Universit\"at Freiburg\\
Ernst-Zermelo-Str.1\\
79104\\
\newline Freiburg\\ Germany}
\email{guofang.wang@math.uni-freiburg.de}

\author[Wei]{Wei Wei}
\address[W.W]{School of Mathematics\\ Nanjing University\\ 210093\\Nanjing\\ P.R. China}
\email{wei\_wei@nju.edu.cn}

\author[Zhang]{Xuwen Zhang}
	\address[X.Z]{Mathematisches Institut\\
		Universit\"at Freiburg\\
	Ernst-Zermelo-Str.1\\
		79104\\
  \newline Freiburg\\ Germany}
\email{xuwen.zhang@math.uni-freiburg.de}

%-----
\begin{abstract}
In this paper, we prove two Liouville-type theorems for capillary minimal graph over $\mathbb{R}^n_+$.
First, if $u$
has linear growth,
then for $n=2,3$ and for any $\theta\in(0,\pi)$, or $n\geq4$ and $\theta\in(\frac{\pi}6,\frac{5\pi}6)$, $u$ must be flat.
Second, if $u$ is one-sided bounded on $\mathbb{R}^n_+$, then for any $n$ and $\theta\in(0,\pi)$, $u$ must be flat.
The proofs build upon gradient estimates for the mean curvature equation over $\mathbb{R}^n_+$ with capillary boundary condition, which are based on carefully adapting the maximum principle to the capillary setting.

		\
		
		\noindent 
        {\bf MSC 2020: 53A10,  35J93, 35J25} \\
		{\bf Keywords:} minimal graph, capillary boundary, gradient estimate, Liouville theorem, maximum principle\\
		
	\end{abstract}

%-----
\maketitle
\tableofcontents 
%\thanks{This work is  supported by test.
%}

%======
\section{Introduction}
Let $\mathbb{R}^n_+=\{x\in\mathbb{R}^n:x_1>0\}$ be the upper half-space.
In this paper we study \textit{capillary minimal graphs}  over the half space, namely   graphs $\S=\{(x,u(x)):x\in\mathbb{R}^{n}_+\}$ of $u$, where $u:\mathbb{R}^n_+\to \mathbb{R}$  solves the minimal surface equation 
\eq{\label{eq:MSE-intro}
{\rm div}\left(\frac{Du}{\sqrt{1+\abs{Du}^2}}\right)
=0,\quad
\text{on }\mathbb{R}^n_+,
}
and satisfies the capillary boundary condition %corresponding to a fixed $\theta\in(0,\pi)$:
\eq{\label{eq:capillary-bdry-intro}
\left<\frac{(-Du(x),1)}{\sqrt{1+\abs{Du(x)}^2}},(1,0,\ldots,0)\right>
=\cos\theta,\quad\forall x\in\p\mathbb{R}^n_+,
}
for a fixed $\theta \in (0,\pi).$

The aim of this paper is to prove Liouville-type theorems for capillary minimal graphs over the half-space, which, roughly speaking, says that any capillary minimal graph with linear growth/one sided bounded on the half-space, must be flat. % provided that the capillary angle $\theta$ is suitably ranged,
The motivation comes from a series of classical and recent progress on the study of minimal surface equation, which we briefly review.

For the minimal surface equation on the whole Euclidean space $\mathbb{R}^n$,
Moser \cite{Moser61} proved that if $\sup_{\mathbb{R}^n}\abs{Du}$ is bounded, then $u$ is flat, by using Harnack inequalities for uniformly elliptic equations. 
In 1969, Bombieri-De Giorgi-Miranda \cite{BDEGM69} established gradient estimates for solutions to the minimal surface equation on $\mathbb{R}^n$ (the $2$-dimension case was shown by Finn \cite{Finn54}), and then proved a Liouville theorem, which says that if in addition, the negative part of $u$ satisfies sub-linear growth (in particular, if $u$ is positive), then $u$ is a constant function.
%In terms of ``bounded gradient'', 
 Caffarelli-Nirenberg-Spruck \cite{CNS88} extended the above Liouville theorem for $u$ with the assumption that $\abs{Du(x)}=o(\abs{x}^\frac12)$.
Later this was extended by Ecker-Huisekn \cite{EH90} for $u$ satisfying $\abs{Du(x)}=o\left(\sqrt{\abs{x}^2+\abs{u(x)}^2}\right)$. By contrast  Simon \cite{Simon89} constructed  a non-flat minimal graph, whose gradient satisfies $\abs{Du(x)}\leq C\abs{x}^{1+O(\frac1n)}$.
{\color{black}On the other hand, Bombieri-Giusti \cite{BG72} generalized Moser's result by assuming that \textit{only} $n-1$ partial derivatives of $u$ are bounded on $\mathbb{R}^n$, which is further extended by Farina in \cite{Farina15} by assuming that $n-1$ partial derivatives of $u$ are \textit{one-sided bounded}, and later in \cite{Farina18} by assuming that \textit{only} $n-7$ partial derivatives of $u$ are one-sided bounded.}
Very recently, there are many interesting results on Liouville Theorem for minimal graphs over a Riemannian manifold with certain curvature assumptions, see \cite{F-CS80,RSS13,DJX16,Ding21,Ding23,CMMR22,CMR23,CGMR24,Ding24}.

The celebrated Bernstein theorem, without any assumption on $u$, states that any minimal graph on $\mathbb{R}^n$ is flat if $n\leq7$.
This was proved by Bernstein \cite{Bernstein27} for $n=2$, by De Giorgi \cite{DeGiorgi65} for $n=3$, by Almgren \cite{Almgren66} for $n=4$, and by Simons \cite{Simons68} for $n=5,6,7$. See also \cite{BDEGG69} for the well-known counterexample for $n\geq8$ by Bombieri-De Giorgi-Giusti.
For its anisotropic counterpart,  the Bernstein theorem holds true, %for general uniformly elliptic integrands, 
when $n=2$ by Jenkins \cite{Jenkins61} and $n=3$ by Simon \cite{Simon77}.
However,  this is no longer the case when $n\geq4$
by the recent results of Mooney and Mooney-Yang \cite{Mooney22,MY24}, in which they constructed anisotropic norms (or Minkowski norms) when $n\geq4$, such that a Bernstein-type result is not valid.
For norms obtained from a small $C^3$-perturbations of the Euclidean norm, Simon's result \cite{Simon77} shows that the Bernstein theorem holds true up to dimension $n=7$, see also a recent generalization by Du-Yang \cite{DY24}.

The last 5 years have witnessed progress on the study of Liouville theorem and Bernstein theorem for minimal surface equation with boundary condition.
Working in the half-space $\mathbb{R}^n_+$, Jiang-Wang-Zhu \cite{JWZ21} proved a Liouville theorem which says that, any solution to the minimal surface equation, having linear growth, with either Dirichlet boundary condition ($u=l$ on $\p\mathbb{R}^n_+$ where $l$ is a linear function), or Neumann boundary condition ($\p_{x_1}u=\lambda$ on $\mathbb{R}^n_+$ for some constant $\lambda\in\mathbb{R}$), must be affine.
{\color{black}See also Farina's results \cite{Farina22}, concerning homogeneous Dirichelt/Neumann boundary condition.}
For the above very rigid Dirichlet boundary condition, a Bernstein theorem was shown by
Edelen-Wang \cite{EW22}, which states that, beyond dimension restriction, any solution $u$ to the minimal surface equation on a convex domain $\Om\subset\mathbb{R}^n$, such that $u=l$ on $\p\Om$ where $l$ is a linear function, must be affine (see also \cite{JWZ23}). 
Recently an anisotropic generalization of this result is shown by Du-Mooney-Yang-Zhu \cite{DMYZ23}.

In terms of capillary boundary condition,
%for Bernstein theorem
less result is known in the literature until the recent work by Hong-Saturnino \cite{HS23}, in which they showed a Bernstein theorem for stable capillary minimal surfaces (dimension $n=2$) in a Euclidean half-space, for general $\theta\in(0,\pi)$, see also De Masi-De Philippis \cite[Theorem 6.3]{MP21}.
Independently,
Li-Zhou-Zhu \cite{LZZ24} used the well-known Schoen-Simon-Yau \cite{SSY75} technique to obtain curvature estimate and consequently a Bernstein theorem for stable  capillary minimal hypersurfaces $\S^n\subset\mathbb{R}^{n+1}_+$ of dimension $2\leq n\leq5$, with no restriction on $\theta$ when $n=2$, while for $3\leq n\leq5$ certain restrictions on $\theta$.
Roughly speaking, $\theta$ can not be too far from $\frac\pi2$, and the range of $\theta$ decreases as the dimension $n$ increases.
Note also that in the free boundary case, curvature estimates for stable minimal hypersurfaces in general Riemannian manifolds were obtained by Guang-Li-Zhou \cite{GLZ20}.
%By virtue of standard calibration arguments, a half-space Bernstein theorem for minimal graph with capillary boundary condition, and a Bernstein theorem for minimal graph with free boundary condition then follow as by-products of these works, which we include in the Appendix below.

%This is actually well-known to experts but missing in the literature, therefore we include it in Appendix \ref{App-1} for completeness.

In view of the above results, a natural question is to ask, whether or not a half-space Liouville theorem for minimal graph with capillary boundary holds.
The purpose of this paper is to address the problem, and we have the following results:

\begin{theorem}[Liouville-type Theorem $I$]\label{Thm:Liouville}
Let
%$\theta\in(0,\pi)$,
$u$ be a smooth function on $\mathbb{R}^n_+$ and $\S$ be its corresponding graph, such that $\S$ is a capillary minimal graph,
%in the sense of Definition \ref{Defn:capillary-graph}, 
and suppose that
 $u$ has linear growth on $\mathbb{R}^n_+$.
\begin{enumerate}
%------i
\item[($i$)] For $n=2,3$, $u$ is affine for general $\theta\in (0,\pi).$

%-----ii
    \item [($ii$)] For $n\ge 4$, if $\theta$ belongs to the range $\msU$, given by
    \eq{\label{condi:msU}
\msU
=\msU(n)=\left\{\theta\in(0,\pi): \abs{\cos\theta}^2<\frac{(3n-7)(n-1)}{4(n-2)^{2}}\right\},
}
    then $u$ is affine.

%-----iii
    \item [($iii$)] For any $n$ and general $\theta\in(0,\pi)$,
    there exists a positive constant $C_\theta$ depending only on $\theta$, with the following property:
    If $u$ is either bounded from above or from below by a linear function $L$ on $\mathbb{R}^n_+$, with $\abs{DL}\leq C_\theta$,
    then $u$ is affine.
\end{enumerate}
\end{theorem}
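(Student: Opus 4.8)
The strategy is to reduce the Liouville property to an interior-type gradient estimate for the capillary mean curvature equation, applied on a ball that is allowed to grow to infinity, and then to use a blow-down argument together with a rigidity statement for the limiting cone. Since the paper announces that its proofs "build upon gradient estimates for the mean curvature equation over $\mathbb{R}^n_+$ with capillary boundary condition," I will take as the central technical input a statement of the following shape: for a capillary solution $u$ on a half-ball $B_R^+ = B_R \cap \mathbb{R}^n_+$, one has
\[
\abs{Du(0)} \le C_1 \exp\!\left( C_2 \frac{\operatorname{osc}_{B_R^+} u}{R} \right),
\]
with $C_1, C_2$ depending on $n$ and $\theta$ (with the dimensional/angle restrictions entering exactly through the range $\msU(n)$ in parts $(i)$–$(ii)$). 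The proof of such an estimate is the capillary analogue of Bombieri–De Giorgi–Miranda: write the equation for $\S$ as a minimal hypersurface with contact angle $\theta$ along $\{x_1 = 0\}$, use the normal $\nu = (-Du,1)/\sqrt{1+\abs{Du}^2}$, and run the maximum principle for the auxiliary function $\log(1+\abs{Du}^2)$ (or the angle function $\nu_{n+1}^{-1}$) weighted by a suitable distance-type cutoff — the capillary boundary condition forces a Robin-type boundary term whose sign is controlled precisely when $\abs{\cos\theta}$ is small relative to the dimension, which is where \eqref{condi:msU} comes from.

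**Deriving part $(iii)$ from the gradient estimate.** Suppose $u \le L$ on $\mathbb{R}^n_+$ for a linear $L$ with $\abs{DL} \le C_\theta$ (the case $u \ge L$ is symmetric, and after subtracting $L$ and a rotation we may reduce to $u \ge 0$, at the cost of shrinking the admissible slope, which is what the smallness $\abs{DL}\le C_\theta$ buys — it guarantees the rotated solution still satisfies a capillary condition for an angle bounded away from $0$ and $\pi$, or at least that the one-sided bound survives). The key point is that a \emph{positive} capillary minimal graph cannot have oscillation growing linearly: one runs a Harnack-type inequality on the graph $\S$ (De Giorgi–Nash–Moser on the minimal surface, adapted up to the capillary boundary via a reflection or a doubling trick across $\{x_1=0\}$) to conclude $\sup_{B_R^+} u \le C \inf_{B_R^+} u + C R \abs{DL}$, which forces $\operatorname{osc}_{B_{R/2}^+} u / R \to 0$ as $R \to \infty$. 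Feeding this into the gradient estimate above at an arbitrary point and letting $R \to \infty$ gives $\abs{Du} \equiv$ const; a short additional argument (the gradient is constant and $u$ solves a quasilinear elliptic equation, hence $u$ is affine — e.g. differentiate the equation, or note $\S$ is then a graph of constant slope satisfying the PDE so each $\partial_i u$ is a bounded solution of a uniformly elliptic equation attaining its sup) upgrades this to $u$ affine.

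**Main obstacle.** The genuine difficulty is the capillary boundary term in the maximum-principle argument for the gradient estimate, rather than the blow-down step. Along $\p\mathbb{R}^n_+$ the contact angle condition relates $\partial_1 u$ to $\cos\theta \sqrt{1+\abs{Du}^2}$, so when one differentiates the test function $v = \log(1+\abs{Du}^2) \phi$ and evaluates $\partial_1 v$ on the boundary, one does not get the clean Neumann sign $\partial_1 v \le 0$ that makes Bombieri–De Giorgi–Miranda work; instead one gets a term proportional to $\cos\theta$ times second-order quantities, and controlling its sign requires an inequality between the tangential Hessian of $u$ and $\abs{\nabla^\S \nu}^2$ that only closes when $\abs{\cos\theta}^2 < \tfrac{(3n-7)(n-1)}{4(n-2)^2}$. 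Thus the plan is: (1) establish the capillary gradient estimate via this weighted maximum principle, carefully tracking the boundary term and isolating the condition $\msU(n)$; (2) prove a capillary Harnack inequality for one-sided-bounded solutions (the reflection across $\{x_1=0\}$ turns the contact-angle condition into a Lipschitz conormal condition, so standard divergence-form Harnack applies); (3) combine via blow-down; (4) promote constant gradient to affine. For parts $(i)$ and $(ii)$, step (2) is replaced by the linear-growth hypothesis, which makes $\operatorname{osc}_{B_R^+} u / R$ bounded, and one instead iterates the gradient estimate on a dyadic sequence of balls together with a decay estimate for the oscillation of $Du$ (a capillary boundary regularity / De Giorgi iteration on the graph) to force $Du$ to converge, the dimensional restriction again entering only through the sign of the capillary boundary term — so $C_\theta$ in $(iii)$ is, concretely, the threshold that keeps the rotated angle inside a regime where the Harnack constant and the gradient-estimate constants are finite.
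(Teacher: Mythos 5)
Your top-level architecture (global gradient bound, then a scaling/regularity argument to upgrade constant-gradient to affine) matches the paper, but the proposal has genuine gaps at the two places where the actual work happens. First, the gradient estimate: running the maximum principle on $\log(1+\abs{Du}^2)$ (equivalently $\log W$) with a distance-type cutoff does not close, and you do not supply the mechanism that closes it. On $\p\mathbb{R}^n_+$ the conormal derivative of $W$ involves second derivatives of $u$ with no sign, and no inequality of the type you allude to ("tangential Hessian versus $\abs{\nabla^\S\nu}^2$") is available. The paper's resolution is structural: it replaces $W$ by the graphical capillary area element $v=\sqrt{1+\abs{Du}^2}+\cos\theta\,u_1$, for which $\left<\na v,\mu\right>=0$ identically along $\p\mathbb{R}^n_+$ (Lemma \ref{good_property}), and it couples this with an ellipsoidal cutoff $\psi$ built on the shifted ellipsoids $E_r$ of \eqref{Er}, chosen so that $a^{i1}\psi_i>0$ on the boundary; with these two choices the boundary Hopf argument works for \emph{every} $\theta\in(0,\pi)$. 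Consequently you also mis-locate the source of the restriction \eqref{condi:msU}: it does not come from a Robin-type boundary term, but from the \emph{interior} computation, where the $\cos\theta\,u_1$ term inside $v$ produces a possibly negative coefficient of $u_{11}^2$ in the maximum-principle inequality; the algebraic claim in Step 3 of Theorem \ref{Thm:gradient-estimate} (leading to the quantity $\msB$ and condition \eqref{eq:further condition}) is what forces $\abs{\cos\theta}^2<\frac{(3n-7)(n-1)}{4(n-2)^2}$ for $n\ge4$.

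Second, part ($iii$): the reduction "subtract $L$ and rotate to reduce to $u\ge0$" is not legitimate, since $u\mapsto u-L$ is a shear of the graph, which preserves neither the minimal surface equation nor the capillary angle condition; and the proposed capillary Harnack inequality via "reflection or doubling across $\{x_1=0\}$" only makes sense at $\theta=\frac\pi2$ (the free boundary case, Remark \ref{Rem:theta>pi/2}) — for general $\theta$ the reflected surface is not smooth/minimal, and the conormal derivative of $L-u$ on $\p\S$ has no reason to vanish, so boundary Harnack does not apply as stated. The paper handles ($iii$) by a different and more elementary device: it modifies the cutoff to $\psi^\ast=\left(Q+\frac{u-L}{2N_\ast r}\right)^2$ as in \eqref{defn:bar-psi}, where the smallness $\abs{DL}\le C_\theta$ enters exactly to preserve the sign of the boundary Hopf term \eqref{eq:a^i1-psi_i^ast}, while the extra $u$-term in the cutoff generates a good interior term that absorbs the problematic $u_{11}^2$ contribution for all $\theta$ (Theorem \ref{Thm:gradient-esti-u<=0}); no Harnack inequality or oscillation decay is needed, because once the global gradient bound holds, the paper concludes by the rescaling $u_R(x)=u(Rx)/R$ plus uniform $C^{1,\alpha}$ estimates up to the boundary, which is close in spirit to, but simpler than, your dyadic iteration.
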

\begin{remark}
\normalfont
\begin{itemize}
%-----1
    \item The range $\msU=\msU(n)$ in Item ($ii$) results from technical aspect, see Theorem \ref{Thm:gradient-estimate} for the detailed discussions.
    %We conjecture that Item ($ii$) should in fact hold for a large range of $\theta$.
%-----2
    \item The constant $C_\theta$ in Item ($iii$) can be explicitly chosen as $\frac1{36}\frac {\abs{\cos\theta}(1-\sin\theta)}{(1+\frac{|\cos\theta |}{\sin\theta})}$.
    We point out that such a choice is the result of a technical analysis and is not sharp, see Theorem \ref{Thm:gradient-esti-u<=0} for details.
    By refining the analysis, one may obtain a slightly larger $C_\theta$.
%----3
    \item {\color{black}It is interesting to see that the small slope assumption in ($iii$) is also used in \cite{Ding24} to prove a Liouville-type theorem for entire minimal graphs with linear growth on manifolds with non-negative Ricci curvature, which
    can be removed if assuming the manifolds are of non-negative sectional curvature, see \cite[Corollary 10]{CGMR24}.}
\end{itemize}
\end{remark}
A direct consequence of ($iii$) is the following statement:
    For any $n$ and general $\theta\in(0,\pi)$, if $u$ has linear growth on $\mathbb{R}^n_+$ and is bounded from above or below by a constant function $L(x)\equiv C_L$ on $\mathbb{R}^n_+$, then $u$ is affine and must take the form
    \eq{\label{eq:u-symmetric}
u(x)
=-\cot\theta x_1+C.
    }
In this case, we can remove the linear growth assumption:
\begin{theorem}[Liouville-type Theorem $II$]\label{Thm:Liouville-positive}
For any $n$ and any $\theta\in (0,\pi)$,
let
$u$ be a smooth function on $\mathbb{R}^n_+$ and $\S$ be its corresponding graph, such that $\S$ is a capillary minimal graph.
If $u$ is one-sided bounded on $\mathbb{R}^n_+$, then $u$ is affine.

\end{theorem}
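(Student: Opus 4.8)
The plan is to deduce Theorem \ref{Thm:Liouville-positive} from item ($iii$) of Theorem \ref{Thm:Liouville}, or more precisely from the consequence of ($iii$) recorded just before the statement, which already yields the conclusion once $u$ is known to have linear growth. Thus the whole problem reduces to \emph{producing} linear growth out of a one-sided bound, and this I would do by a half-space Bombieri-De Giorgi-Miranda argument resting on the boundary gradient estimate of Theorem \ref{Thm:gradient-esti-u<=0}.

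First I would normalize the solution. Adding a constant to $u$ leaves \eqref{eq:MSE-intro} and \eqref{eq:capillary-bdry-intro} unchanged, while the substitution $u\mapsto-u$ negates the left-hand side of \eqref{eq:capillary-bdry-intro} and hence turns a capillary minimal graph with angle $\theta$ into one with angle $\pi-\theta$, both lying in $(0,\pi)$. Therefore, after possibly replacing $u$ by $-u$ and subtracting a constant, we may assume $u\le0$ on $\mathbb{R}^n_+$ with $\sup_{\mathbb{R}^n_+}u=0$; since Theorem \ref{Thm:Liouville-positive} is stated for all $\theta\in(0,\pi)$, nothing is lost.

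Next I would fix $y\in\mathbb{R}^n_+$ and let $\bar y=(0,y_2,\dots,y_n)\in\p\mathbb{R}^n_+$ be its projection onto the boundary. For every $R>2y_1$ one has $y\in B^+_{R/2}(\bar y)$, so Theorem \ref{Thm:gradient-esti-u<=0}, applied on the half-ball $B^+_R(\bar y)$, bounds $\abs{Du(y)}$ by a quantity of the form $c_1(\theta)\exp\!\big(c_2(\theta)R^{-1}\mathcal{O}_R\big)$, where $\mathcal{O}_R$ is an oscillation of $u$ that, thanks to the normalization $u\le0$, is controlled by $\abs{u(y)}$ \emph{uniformly in $R$}. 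Letting $R\to\infty$ forces $\abs{Du(y)}\le c_1(\theta)$, and since $y$ was arbitrary,
\eq{
\sup_{\mathbb{R}^n_+}\abs{Du}\ \le\ c_1(\theta)\ <\ \infty .
}
In particular $u$ is globally Lipschitz on $\mathbb{R}^n_+$ and so has linear growth. Since moreover $u$ is bounded from above by the constant function $L\equiv0$, which satisfies $\abs{DL}=0\le C_\theta$, item ($iii$) of Theorem \ref{Thm:Liouville} applies and shows that $u$ is affine; reversing the normalization gives the theorem (and, together with \eqref{eq:capillary-bdry-intro}, the affine function $u$ is necessarily of the form \eqref{eq:u-symmetric}).

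The main obstacle is the gradient bound just invoked: the classical \emph{interior} Bombieri-De Giorgi-Miranda estimate cannot be iterated to infinity inside a half-space, so one genuinely relies on Theorem \ref{Thm:gradient-esti-u<=0} holding on half-balls centred at boundary points of arbitrarily large radius $R$, with constants depending only on $n$ and $\theta$, and on the fact that its right-hand side only feels the oscillation of $u$ on the side where $u$ is unconstrained---so that one-sidedness is exactly what makes the exponent vanish in the limit $R\to\infty$. All remaining steps are routine bookkeeping.
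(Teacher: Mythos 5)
Your proposal has a genuine gap at its core. Theorem \ref{Thm:gradient-esti-u<=0}, which you invoke to \emph{produce} linear growth, explicitly \emph{assumes} linear growth ($|u(x)|\le C_0(1+|x|)$) as a standing hypothesis, and its constant $\widehat\Lambda$ depends on $C_0$; so the reasoning is circular as stated. Even if one reads through to the local estimate in its proof, that estimate has the form
\eq{
\sup_{\overline{W_{r_*}}}|Du| \le \frac{1}{1-|\cos\theta|}\exp\Big(C_1+C_2\frac{M_*}{r}+C_3\frac{M_*^2}{r^2}\Big),\quad M_*=\sup_{W_r}|u|+r,
}
and your claim that the quantity in the exponent is ``an oscillation of $u$...controlled by $|u(y)|$ uniformly in $R$'' misreads it: $M_*$ is a supremum of $|u|$ over the whole domain $W_r$ (of diameter $\sim r$), and one-sidedness $u\le 0$ does not by itself prevent $|\inf_{W_r}u|/r\to\infty$. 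Thus the exponent does not go to zero as $R\to\infty$ without the linear growth input, and the gradient bound does not follow.

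What the paper actually does, in Section \ref{Sec-4}, is to change the auxiliary function so that the normalization constant is $M_*=u(p)+r$, a \emph{pointwise} quantity at the boundary reference point $p$, rather than a supremum of $|u|$. This is Lemma \ref{Lem:gradient-estimate-bdry-u>0}: the one-sidedness $u<0$ together with the constraint $Q^\ast>0$ on the support of $\psi^\ast$ forces $-2N_\ast r<u<0$ on $W_r$, which keeps $\varphi(u)$ bounded near $1$ independently of any growth of $u$, and the boundary gradient estimate $|Du(p)|\le C(n,\theta)$ follows by sending $r\to\infty$. A companion interior estimate (Theorem \ref{Thm:u>0}, built with the standard $\log W$ and a ball-centered cut-off, falling back to Lemma \ref{Lem:gradient-estimate-bdry-u>0} when the maximum lands on $\p\mathbb{R}^n_+$) then yields the global bound $\sup_{\overline{\mathbb{R}^n_+}}|Du|\le\widetilde\Lambda(n,\theta)$ of Theorem \ref{Thm:intro-gradient-sign}, and the paper's proof of Theorem \ref{Thm:Liouville-positive} runs the blow-down/Schauder argument directly on that bound. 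Your idea of then reducing to item ($iii$) of Theorem \ref{Thm:Liouville} would also close the argument once such a bound is in hand, but the missing ingredient is precisely the Section \ref{Sec-4} construction; it cannot be replaced by an application of Theorem \ref{Thm:gradient-esti-u<=0}.
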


\begin{remark}
\normalfont
A
direct consequence of Theorem \ref{Thm:Liouville-positive} is the following non-existence result:
There is no smooth solution to the minimal surface equation \eqref{eq:MSE-intro} with capillary boundary condition \eqref{eq:capillary-bdry-intro}, if $\theta\in(0,\frac\pi2)$ and $u$ is bounded from below by a constant on $\mathbb{R}^n_+$; or $\theta\in(\frac\pi2,\pi)$ and $u$ is bounded from above by a constant on $\mathbb{R}^n_+$.
\end{remark}

The crucial step to prove the Liouville-type theorems is to show gradient estimates for minimal surface (mean curvature) equation on the half-space with capillary boundary condition.
Our two main estimates read as follows.
\begin{theorem}[Gradient estimate in terms of linear growth]\label{Thm:intro-gradient-linear-growth}
Let
%$\theta\in(0,\pi)$,
$u$ be a smooth function on $\mathbb{R}^n_+$ and $\S$ be its corresponding graph, such that $\S$ is a capillary minimal graph.
%in the sense of Definition \ref{Defn:capillary-graph}.
Suppose that $u$ has linear growth on $\mathbb{R}^n_+$, namely, $\abs{u(x)}\leq C_{0}(1+\abs{x})$ for some constant $C_0>0$.
There exists a positive constant $\Lambda=\Lambda(n,\theta,C_0)$ with the following property:
If
\begin{enumerate}
    \item [($i$)] For $n=2,3$, and  general $\theta\in(0,\pi)$;
    \item [($ii$)] For $n\geq4$, $\theta$ belongs to the range $\msU$ defined in \eqref{condi:msU},
\end{enumerate}
then
\eq{
\sup_{\overline{\mathbb{R}_{+}^{n}}}\abs{D u}
\leq\Lambda.
}
\end{theorem}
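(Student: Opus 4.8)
The plan is to derive an interior-and-boundary gradient bound by the classical Bombieri–De Giorgi–Miranda technique, adapted to the capillary boundary. Write $v=\sqrt{1+|Du|^2}$ and recall that on the minimal graph $\Sigma$ the function $\log v$ is subharmonic with respect to the induced Laplacian $\Delta_\Sigma$; more precisely $\Delta_\Sigma \log v = -|A|^2 + |\nabla_\Sigma \log v|^2 \le |\nabla_\Sigma \log v|^2$, which after a standard manipulation makes $w = v\,\varphi(X_{n+1})$ (for a suitable convex increasing $\varphi$ applied to the height $u$, or to a linear combination of $u$ and the distance $x_1$ to $\partial\mathbb R^n_+$) a subsolution of a nice elliptic equation on $\Sigma$. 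One then runs a Simon-type iteration, or the Michael–Simon Sobolev inequality on $\Sigma$, to bound $w$ at an interior point by its $L^1$-average, which in turn is controlled by the linear growth hypothesis $|u(x)|\le C_0(1+|x|)$ through a barrier/volume estimate for $\Sigma$.

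The new feature is the capillary boundary $\partial\Sigma$, where the conormal $\bar\nu$ of $\partial\Sigma$ in $\Sigma$ makes the fixed contact angle $\theta$ with the hyperplane $\{x_1=0\}$; equivalently the Neumann-type relation $\langle \nu_\Sigma, e_1\rangle = \cos\theta$ holds along $\partial\mathbb R^n_+$. The first step is therefore to compute the boundary derivative $\bar\nu(\log v)$ (and $\bar\nu$ of the auxiliary function $w$) along $\partial\Sigma$: differentiating the capillary condition tangentially along the flat boundary and using that $\partial\mathbb R^n_+$ is totally geodesic, one obtains that $\bar\nu(\log v)$ equals (a controlled multiple of) the second fundamental form of $\partial\Sigma$ acting on tangential directions, which is itself expressible through $|A|$ and $\cos\theta$. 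The goal is to show that, provided $\theta$ satisfies the stated constraint $|\cos\theta|^2 < \tfrac{(3n-7)(n-1)}{4(n-2)^2}$ when $n\ge 4$ (and unconditionally for $n=2,3$), the auxiliary function $w$ satisfies a favorable one-sided inequality $\bar\nu w \le 0$ (or $\le$ harmless lower-order terms) on $\partial\Sigma$. This is exactly the point where the dimensional/angle restriction enters: the Simons-type inequality $\Delta_\Sigma |A|^2 \ge -2|A|^4 + \tfrac{2}{n}|\nabla_\Sigma |A||^2$ in the interior must be balanced against the boundary term coming from the capillary angle, and only when $\theta$ is close enough to $\pi/2$ (in the quantitative sense of $\msU$) does the boundary contribution have the right sign to be absorbed.

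With the interior subharmonicity and the boundary sign condition in hand, the third step is a maximum-principle / Moser iteration argument performed on $\Sigma$ with boundary: since $w$ is a subsolution with $\bar\nu w \le 0$ on $\partial\Sigma$, a Stampacchia or De Giorgi iteration (using the Michael–Simon inequality, whose half-space analogue with capillary boundary needs to be invoked or re-derived, e.g. via reflection across $\{x_1=0\}$ after noting the capillary angle makes the doubled surface $C^{1,1}$) yields $\sup_{B_R\cap\Sigma} w \le C R^{-n}\int_{B_{2R}\cap\Sigma} w$. Finally one estimates $\int_{B_{2R}\cap\Sigma} w \lesssim \mathcal H^n(B_{2R}\cap\Sigma) \le C R^n$ using the linear growth of $u$ and the monotonicity formula (again in its capillary-boundary form), so that $\sup_{B_R\cap\Sigma} v = \sup_{B_R\cap\Sigma} w/\varphi \le \Lambda(n,\theta,C_0)$ uniformly in $R$; letting $R\to\infty$ gives the claimed global bound $\sup_{\overline{\mathbb R^n_+}}|Du|\le \Lambda$.

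I expect the main obstacle to be the boundary analysis in the second step: getting the \emph{sharp} sign of $\bar\nu w$ along $\partial\Sigma$ requires carefully tracking how the capillary condition $\langle\nu_\Sigma,e_1\rangle=\cos\theta$ constrains the boundary second fundamental form, and then algebraically matching this against the Simons inequality coefficients to extract precisely the threshold $\tfrac{(3n-7)(n-1)}{4(n-2)^2}$; the choice of the convex function $\varphi$ and of the linear combination of $u$ and $x_1$ in the definition of $w$ will have to be tuned so that both the interior and the boundary inequalities hold simultaneously, and this balancing is what forces the restriction on $\theta$ for $n\ge 4$ while leaving $n=2,3$ unconstrained.
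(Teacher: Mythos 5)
Your proposal is not a proof, and the two places where it would break are precisely the two difficulties the paper is organized around. First, the boundary step: with your choice $w=v\,\varphi$ built from $v=\sqrt{1+|Du|^2}$, there is no reason for $\bar\nu(\log v)$ (equivalently $\mu(\log W)$) to have a favorable sign along $\p\Sigma$, and "differentiating the capillary condition tangentially" does not produce one. The structural fact that makes a Hopf-type boundary argument work is that the \emph{capillary} area element $v=\sqrt{1+|Du|^2}+\cos\theta\,u_1$ satisfies $\left<\na v,\mu\right>=0$ exactly on $\p\mathbb{R}^n_+$ (Lemma \ref{good_property}), and even then one needs a cut-off adapted to the angle (the ellipsoidal $\psi$ of Lemma \ref{Lem:cut-off}, whose normal derivative on $\p\mathbb{R}^n_+$ is $4\psi^{1/2}|\cos\theta|/r>0$) so that the boundary inequality \eqref{eq:a^i1-psi_i} closes. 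Your fallback device, reflecting across $\{x_1=0\}$ to get a Michael--Simon inequality and a monotonicity formula for the doubled surface, only works in the free boundary case $\theta=\frac\pi2$: for $\theta\neq\frac\pi2$ the graph meets the hyperplane at angle $\theta$, the reflected copy meets the original at angle $2\theta\neq\pi$, and the doubled surface has a genuine edge, so it is not $C^{1,1}$ and neither the Sobolev inequality nor the monotonicity formula transfers. (Area growth for capillary graphs does hold, but via the calibration/minimization argument for the capillary functional, not via reflection.)

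Second, your proposed mechanism for the threshold $|\cos\theta|^2<\frac{(3n-7)(n-1)}{4(n-2)^2}$ — balancing a Simons-type inequality for $|A|^2$ against a boundary term — is not where the restriction comes from, and you give no computation that would produce it. In the paper's proof the boundary (Hopf) step works for \emph{all} $\theta\in(0,\pi)$; the angle restriction arises purely in the \emph{interior} maximum-principle computation for $G=\varphi(u)\psi\log v$: because $v$ contains the term $\cos\theta\,u_1$, the coefficient $\mathbf{C}_i$ of $u_{ii}^2$ in \eqref{eq:J} can become negative when $|\cos\theta|\geq\frac{\sqrt3}2$, and it is rescued by an algebraic step that uses the trace of the minimal surface equation to express $u_{11}$ through the remaining $u_{ii}$ and $u_{nn}$ (the \textbf{Claim} and \eqref{defn:msB} in Step 3 of Theorem \ref{Thm:gradient-estimate}); optimizing that algebra is exactly what yields \eqref{condi:msU}, with no curvature estimate or Simons inequality involved. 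Without identifying the correct auxiliary function $v$, the angle-adapted cut-off, and this interior algebraic absorption, your outline has no route to either the boundary sign condition or the stated range of $\theta$.
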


\begin{theorem}[Gradient estimate for solutions with a sign]\label{Thm:intro-gradient-sign}
For any $n$ and any $\theta\in (0,\pi)$,
let $u$ be a smooth function on $\mathbb{R}^n_+$ and $\S$ be its corresponding graph, such that $\S$ is a capillary minimal graph.
%in the sense of Definition \ref{Defn:capillary-graph}.
There exists a positive constant $\widetilde\Lambda=\widetilde\Lambda(n,\theta)$ with the following property:
If $u$ is either bounded from above or from below by a constant function on $\mathbb{R}^n_+$,
%----
\iffalse
\begin{enumerate}
    \item [($i$)] For $\theta\in[\frac\pi2,\pi)$, and $u$ positive on $\mathbb{R}^n_+$;
    \item [($ii$)] For $\theta\in(0,\frac\pi2]$, and $u$ negative on $\mathbb{R}^n_+$,
\end{enumerate}
\fi
%----
then 
\eq{
    \sup_{\overline{\mathbb{R}^n_+}}\abs{Du}\leq\widetilde\Lambda.
    }
\end{theorem}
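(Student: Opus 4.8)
The plan is to reduce the gradient estimate to an interior-type argument that exploits the capillary boundary condition, in the spirit of Bombieri--De Giorgi--Miranda, but carefully adapted so that the boundary $\partial\mathbb{R}^n_+$ does not obstruct the maximum principle. First I would assume, without loss of generality, that $u$ is bounded from above by a constant: after the vertical translation $u\mapsto u-\sup_{\mathbb{R}^n_+}u$ and possibly the reflection $u\mapsto -u$ (which corresponds to $\theta\mapsto\pi-\theta$, harmless since the hypotheses are symmetric in this swap), we may suppose $u\le 0$ on $\mathbb{R}^n_+$. Geometrically, $\S$ is then a minimal hypersurface in the slab $\{(x,t):x\in\mathbb{R}^n_+,\ t\le 0\}$ meeting the vertical wall $\{x_1=0\}$ at the constant angle $\theta$, and the quantity to control is the slope $v:=\sqrt{1+|Du|^2}$, equivalently the inverse of the vertical component $\nu^{n+1}$ of the upward unit normal.

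The key steps, in order: (1) Write down the standard Bochner/Simons-type identity on the minimal graph $\S$ for the function $v$ (or for $\log v$, or for the height $u$ and the normal component together), obtaining that on $\S$ the quantity $\log v$ is a subsolution of $\Delta_\S(\log v)\ge -|A|^2\ge 0$ modulo the geometry, so $v$ satisfies a good differential inequality with respect to the intrinsic Laplacian. (2) The crucial new point is the boundary behaviour: along $\Gamma:=\S\cap\{x_1=0\}$, the capillary condition fixes the angle, and one computes the conormal (Neumann) derivative of $v$ along $\Gamma$ in terms of the second fundamental form of $\partial\mathbb{R}^n_+$ (which is flat) and of $\theta$; because the wall is a hyperplane, the Robin-type boundary term has a definite sign (or can be absorbed), so that $v$ — or a suitable modification of it — satisfies a differential inequality with a favourable Neumann condition on $\Gamma$. (3) Introduce the auxiliary function $w=-u$ or a linear combination $\phi=a(-u)+$ (something like the distance to the wall) that is nonnegative on $\S$ by the sign hypothesis and whose boundary conormal derivative, using \eqref{eq:capillary-bdry-intro}, is controlled by $\cos\theta$; then run the Bombieri--De Giorgi--Miranda iteration (or the Korevaar/Li--Wang test-function argument) for $\log v$ against powers of $\phi$, cut off on intrinsic balls $B_R\cap\S$. (4) Since $u\le 0$ gives the one-sided bound for free and removes the need for any growth hypothesis, the boundary terms produced by integrating by parts on $B_R\cap\S$ over the portion lying on $\Gamma$ are handled by step (2), and the portion on the intrinsic sphere is killed by letting $R\to\infty$ after the usual volume-growth / Sobolev-on-minimal-graphs estimates; this yields $\sup_\S\log v\le \widetilde\Lambda(n,\theta)$, i.e. the claimed bound.

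The main obstacle I expect is step (2)–(3): controlling the boundary contributions on $\Gamma$. In the interior theory one never sees them, and with Dirichlet or Neumann data (as in \cite{JWZ21,EW22}) the boundary is either a graph over a hyperplane or the conormal is constant, which trivializes the boundary integral. Here the contact angle is constant but the \emph{trace of $Du$ on $\partial\mathbb{R}^n_+$ is not} — only its first component is pinned — so when one differentiates $v$ tangentially along $\Gamma$ there is a genuine boundary term coupling $\theta$ to the tangential Hessian of $u$, and making its sign (or at least its size, with the explicit dependence on $\theta$ that forces the admissible range and the constant $C_\theta$ in Theorem \ref{Thm:Liouville}) work out is exactly where the ``carefully adapting the maximum principle to the capillary setting'' is spent. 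Concretely, I anticipate needing a Hopf-lemma–type boundary point analysis for the modified test function, together with the identity $\langle\nu,e_1\rangle=\cos\theta$ differentiated along $\Gamma$, to show the boundary flux has the right sign; once that is in hand, the remaining iteration and the $R\to\infty$ passage are the now-classical parts.
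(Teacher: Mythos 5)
Your plan correctly identifies the overall architecture (a maximum--principle argument on the graph coupled with a Hopf-type boundary analysis, and the reduction to $u\le 0$), but it has a genuine gap at the step you yourself flag as the crux, and the gap is precisely what the paper's construction is built to repair. You propose to run the argument with the standard slope $v=\sqrt{1+|Du|^2}$ and to ``compute the conormal derivative of $v$ along $\Gamma$,'' asserting that the resulting Robin-type term has a sign or can be absorbed because the wall is a hyperplane. That assertion fails for the standard slope: differentiating the capillary condition $u_1=-\cos\theta\,W$ only controls \emph{tangential} derivatives of $W$ along $\partial\mathbb{R}^n_+$, and the conormal derivative $\mu(W)$ involves $u_{11}$-type quantities with no sign. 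The key technical point of the paper (Lemma~\ref{good_property}, going back to Ural'tseva and Gerhardt) is to replace $W$ by the \emph{capillary area element} $v=\sqrt{1+|Du|^2}+\cos\theta\,u_1$, for which one shows $\langle\nabla v,\mu\rangle\equiv 0$ on $\partial\mathbb{R}^n_+$. Without this modified $v$, the Neumann flux you need to sign in step~(2) simply does not vanish, and no amount of cutting off on intrinsic balls removes it. This is not an omission of a routine detail; it is the ingredient that makes the capillary boundary tractable at all.

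Two further ingredients of the paper's proof are missing from your sketch and would also be needed to close the argument. First, the cut-off is not the indicator of a ball but the square of the ellipsoidal function $Q(x)=1-\bigl((x_1-|\cos\theta|r)^2+\sin^2\theta|x'|^2\bigr)/r^2$, whose $x_1$-derivative at the wall is $+4\psi^{1/2}|\cos\theta|/r>0$ regardless of the sign of $\cos\theta$; this specific tilt of the cut-off is what, together with \eqref{eq:bar-Du-u_1-bdry}, makes the boundary Hopf computation \eqref{eq:a^i1-psi_i} come out with a strictly positive sign. Second, to drop the growth hypothesis entirely when $u$ is one-sided bounded, the paper modifies the cut-off to $\psi^\ast=(Q+\tfrac{u}{2N_\ast r})^2$ (Korevaar's trick adapted to the capillary setting); the extra term $u/(2N_\ast r)$ contributes, at an interior maximum, a positive term of size $u_n^2\log v\cdot\frac{\varphi'}{\varphi}\cdot\frac{1}{N_\ast r(\psi^\ast)^{1/2}}$ which is exactly what absorbs the error $-CW^2/(\psi^\ast r^2)$ in \eqref{positive lower bound of P0}--\eqref{ineq:psi^ast-logv-conclusion} without any angle restriction. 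Your vague $\phi=a(-u)+(\text{distance})$ points in roughly this direction, but without the specific algebraic coupling to the $\log v$ term and the $A=\tfrac{u_n}{W}+|\cos\theta|b_n$ quantities, the estimate does not close. Finally, your proposal oscillates between an integral (BDGM) iteration and a pointwise maximum principle; the paper commits to the latter, applied to $G^\ast=\varphi\psi^\ast\log v$ on the ellipsoidal set $W_r$, which is what allows the Hopf argument at a boundary maximum to be run pointwise rather than through an integration by parts whose boundary term you would still have to sign.
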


Our strategy to establish these estimates is to construct suitable auxiliary functions in the capillary settings and use the maximum principle.
Precisely, consider a solution $u$ to the following equation
\eq{
\begin{array}{rccl}
    (\de_{ij}-\frac{u_iu_j}{{1+\abs{Du}^2}})u_{ij}
    &=&0,\quad&\text{ in }\mathbb{R}^n_+,\\
    u_1&=&-\cos\theta\sqrt{1+\abs{Du}^2},\quad&\text{ on }\p\mathbb{R}^n_+,
\end{array}
}
where $u_i=\frac{\partial u}{\partial x_i}$, $u_{ij}= \frac{\partial^2 u}{\partial x_i \partial x_j}$.
Our main difficulty is to deal with boundary condition of such type.
In order to obtain gradient estimates, we first introduce a suitable, but unusual family of (ellipsoids) domains: for any $r>0$,
\eq{\label{Er}
E_r
\coloneqq&\left\{(x_1,x'):x_1>0, (x_1-\abs{\cos\theta} r)^2+\sin^2\theta\abs{x'}^2< r^2\right\},\\
E_{\theta, r}
\coloneqq&\left\{(x_1,x'):x_1>0, (x_1-\abs{\cos\theta} r)^2+\sin^2\theta\abs{x'}^2<\left(\frac{1+\abs{\cos\theta}}{2}r\right)^2\right\},
}
to replace round balls in the classical argument.
In $E_r$ we choose
a cut-off function $\psi$ defined as
\eq{\label{eq_a2}
\psi(x)= Q^2(x), \quad\hbox{with }
Q(x)\coloneqq1-\frac{(x_1-\abs{\cos\theta} r)^2+\sin^2\theta\abs{x'}^2}{r^2},
}
which will play a crucial role in our boundary estimates.
The auxiliary function that we construct to use the maximum principle is
\eq{\label{defn:G-intro}
G(x)=\varphi(u(x))\psi(x)\log v(x),
}
where $\varphi(u(x))=\frac{u(x)}{2M}+1$ with $M\coloneqq\sup_{E_r}\abs{u}+r$
%; $\psi(x)$ is a cut-off function on the relatively open domain $E_r\subset\mathbb{R}^n_+$;
%which has to be carefully chosen;
and $v(x)\coloneqq\sqrt{1+\abs{Du(x)}^2}+\cos\theta u_1(x)$  is  the graphical capillary area element of the graph $\S\subset\mathbb{R}^{n+1}_+$.
%with respect to the capillary gauge \eqref{defn-F_theta-intro}.

With the help of the function $G$, 
we will show in Theorem \ref{Thm:gradient-estimate} that, for $n=2,3$ and general $\theta\in(0,\pi)$; for $n\geq4$ and $\theta$ belongs to $\msU$, there holds
\eq{\label{ineq:gradient-estimate-intro}
\sup_{\overline{E_{\theta, r}}}\abs{Du}
\leq \frac{1}{1-|\cos\theta|}\exp\left(C_1+C_2\frac{M}{r}+C_3\frac{M^2}{r^2}\right),
%\abs{Du(0)}
%\leq\exp\left(C_1+C_2\frac{M}{r}+C_3\frac{M^2}{r^2}\right),\quad\forall r>0,
}
where $M=\sup_{E_r}\abs{u}+r$;
$C_1,C_2, C_3$ are positive constants depending only on $n,\theta$.
In particular, if $u$ has linear growth on $\mathbb{R}^n_+$, i.e., $\abs{u(x)}\leq C_0(1+\abs{x})$, this estimate implies the following global gradient estimate after sending $r\ra\infty$
\eq{\label{ineq:global-gradient-estimate-intro}
\sup_{\overline{\mathbb{R}^n_+}}\abs{Du}
\leq\Lambda=\Lambda(n,\theta,C_0).
}

The function $v$, used already in \cite{Uralcprime} and \cite{Gerhardt76}, has a nice property that $\left<\na v(x),\mu(x)\right>=0$ along the boundary (see Lemma \ref{good_property}, here $\na$ is the intrinsic gradient on the graph $\S$).
The cut-off function $\psi$, designed to couple with the capillary boundary condition, will be crucially used when carrying out a Hopf-type argument on $\p\mathbb{R}^n_+$, see {\bf Step 1}, especially \eqref{eq:a^i1-psi_i} in the proof of Theorem \ref{Thm:gradient-estimate}.
The choices of $v$ and $\psi$ enable us to overcome the difficulty resulting from the capillary boundary condition.
However, they bring new obstacles in the interior computations, mainly due to the appearance of $\cos\theta u_1$ in $v$.

{\bf Step 2} and {\bf Step 3} deal with the case that the maximum point $\max_{\overline{E}_r}G=G(z_0)$ is an interior point of $E_r$.
We will exploit the maximality condition at $z_0$:
\eq{\label{ineq:intro-max-interior}
0
\geq g^{ij}(z_0)(\log G)_{ij}
=g^{ij}\left(\frac{\varphi_{ij}}{\varphi}-\frac{\varphi_{i}}{\varphi}\frac{\varphi_{j}}{\varphi}+\frac{\psi_{ij}}{\psi}-\frac{\psi_{i}}{\psi}\frac{\psi_{j}}{\psi}+\frac{v_{ij}}{v\log v}-\frac{(1+\log v)v_{i}v_{j}}{(v\log v)^{2}}\right),
}
where $(g^{ij})$ corresponds to the inverse of the intrinsic metric of $\S$ as a hypersurface in $\mathbb{R}^{n+1}$.

The remained difficulty comes from %the computations of % By carefully analyzing 
the term $g^{ij}\left(\frac{v_{ij}}{v\log v}-\frac{(1+\log v)v_{i}v_{j}}{(v\log v)^{2}}\right)$, in which
the term $\cos\theta u_1$ in $v$ would result in a possible uncontrolled term if $\abs{\cos\theta}>\frac{\sqrt{3}}2$, see the estimate \eqref{ineq:C_i} in {\bf Step 2}.
Then in {\bf Step 3}, we use algebraic arguments (see the {\bf Claim} of {\bf Step 3}, and \eqref{defn:msB}) to control the possible bad term in \eqref{ineq:C_i}.
Thus, exploiting \eqref{ineq:intro-max-interior} we obtain gradient estimates for general $\theta\in(0,\pi)$ in low dimensions $n=2,3$,
while for $n\geq4$ we could
slightly push up the range of $\theta$, contributing to the set $\msU$ appearing in Theorem \ref{Thm:intro-gradient-linear-growth} ($ii$).

Different from the above proof, our strategy to approach the second main estimate (Theorem \ref{Thm:intro-gradient-sign}) is to use another auxiliary function (see \eqref{defn:G^ast-intro} below) to directly establish the boundary pointwise gradient estimate.
Once this crucial step is finished, we can then apply a classical argument to obtain the global gradient estimate on $\mathbb{R}^n_+$.

Let us briefly introduce how this crucial step works.
Consider for example $u<0$,
for any $p=(0,p')\in \partial \mathbb{R}^n_+$, fix a sufficiently large $r>0$, and define
\eq{\label{defn:G^ast-intro}
G^\ast(x)
=\varphi(u(x))\psi^\ast(x)\log v(x),\quad
\psi^\ast(x)
=\left(Q_{p}(x)+\frac{u(x)}{2N_\ast r}\right)^2,
}
where $\varphi(s)=\frac{s}{u(p)+r}+1$;
$Q_{p}(x)\coloneqq1-\frac{(x_1-\abs{\cos\theta} r)^2+\sin^2\theta\abs{x'-p'}^2}{r^2}$; and $N_\ast$ is a positive constant that needs to be suitably chosen (in fact, we can choose $N_\ast=\frac1{36}$).

The function $G^\ast$ is a modification of $G$ in \eqref{defn:G-intro}.
The advantages of such a choice are twofold: 
\textit{1.}
The function $Q_p$, designed to couple with the capillary boundary condition, is still sufficient for the boundary Hopf-type argument to work, despite the term $\frac{u}{2N_\ast r}$ in $\psi^\ast$ causes extra difficulty.
\textit{2.} In the interior computations, the term $\frac{u}{2N_\ast r}$ in $\psi^*$ now contributes a good term to control the possible bad terms caused by the term $\cos\theta u_1$ in $v$.
See Lemma \ref{Lem:gradient-estimate-bdry-u>0} also Theorem \ref{Thm:gradient-esti-u<=0} for the detailed discussions.

To end the introduction, we explain why our auxiliary functions are chosen in this way, by reviewing the history of gradient estimates for mean curvature equation.
%, including the interior case and the capillary boundary case, from which one can have a clearer picture why our auxiliary functions are chosen in this way.
First of all,
capillary boundary problem for mean curvature equation is a classical problem coming from physics, see Finn \cite{Finn86}.
Concerning the existence of a solution to the capillary boundary problem in bounded domain, 
the gradient estimates are essential and we refer to Spruck \cite{Spruck75}, Simon-Spruck \cite{SimonSpruck76}, Ural'tseva \cite{Uralcprime}, Gerhardt \cite{Gerhardt76}, Korevaar \cite{Korevaar1988}, Lieberman \cite{Lieberman13} for details. 
The function $v$ appearing in \eqref{defn:G-intro} is partly motivated by Ural'tseva \cite{Uralcprime} and  Gerhardt \cite{Gerhardt76} (see also Section \ref{Sec:2-2-1}), which is nowadays standard for capillary boundary problems, see e.g.,  Guan \cite{GuanB1994,GuanB1997}, Deng-Ma \cite{DM}, Gao-Lou-Xu \cite{GLX2024}, Lou-Yuan \cite{GLX2024,LYJMPA}.

The interior gradient estimates for mean curvature equation can be dated back to Bombieri-De Giorgi-Miranda \cite{BDEGM69}, which is based on integral methods.
See also Ladyzhenskaya-Ural’Tseva \cite{LU68}, Bombieri-Giusti \cite{BG72} and Trudinger \cite{Tru1972}.
Later, a new proof using maximum principle is provided in Korevaar's work \cite{Korevaar1986}, where he modified the cut-off function as $\left(1-\abs{x}^2-\frac{u(x)}{2u(0)}\right)^+$ for negative function $u$ in unit ball, see also \cite{EckerHuisken1991}. In \cite{Wang98}, by constructing  a new auxiliary function with a $\log$ term, X.-J. Wang gave another maximum principle proof.  
To study one-sided bounded $u$, for example $u<0$,
%is one-sided bounded or has sub-linear growth)
we combine the mentioned techniques together and modify them into the capillary setting, to construct the new auxiliary function
%of the form in
\eqref{defn:G^ast-intro}.
%When we consider $u$ with some extra conditions, such as $u$ is positive on $\partial \mathbb{R}^n_+ $,  we mix all the techniques together and construct the new auxiliary function $G$ to overcome the difficulties coming from the boundary and the computations of cut-off function.  
%We emphasize, when dealing with the case that the maximum of $G^\ast$ is attained at some $z_0\in\p\mathbb{R}^n_+$, it is crucial to know the shape of the set $\left\{x\in\p\mathbb{R}^n_+:Q(x)+\frac{u(x)}{2N_\ast r}>0\right\}$
%$0$-super level set of the cut-off function $(Q+\frac{u}{2N_\ast r})$, and this is where $u\mid_{\p\mathbb{R}^n_+}\leq0$ mainly used for.
%If $u$ has linear growth on $\partial \mathbb{R}^n_+ $, we cannot establish a desired estimate because in this case $\left\{x\in\p\mathbb{R}^n_+:Q+\frac{u}{2N_\ast r}>0\right\}$ may be too large. 

We believe that the technique developed in this paper  can be used to study a more general class of PDEs with capillary boundary condition.

\

%------
\noindent{\it The rest of  the paper  is organized as follows.}
In Section \ref{Sec-2} we provide preliminaries on capillary minimal graphs
and study the graphical capillary area element $v$.
In Section \ref{Sec-3} we show gradient estimates for mean curvature equation on $\mathbb{R}^n_+$ with capillary boundary condition and linear growth assumption, which is Theorem \ref{Thm:gradient-estimate}.
Its refinements are presented in Subsection \ref{Sec-3-2}.
In Section \ref{Sec-4} we show gradient estimates for one-sided bounded solutions to minimal surface equation on $\mathbb{R}^n_+$ with capillary boundary condition.
In Section \ref{Sec-5} we prove Liouville-type theorems.

\

%----
\noindent{\bf Acknowledgement.}
{\color{black}
We thank Prof. Alberto Farina and Prof. Luciano Mari for communications on their works related to this topic.
}
This work was carried out while
W. Wei was visiting University of Freiburg supported  by the Alexander von Humboldt research fellowship. She would like to thank Institute of Mathematics, University of Freiburg for its  hospitality. 
She was also
partially supported by NSFC (No. 12201288, 11771204) and BK20220755.
X. Zhang would like to thank Prof. Chao Xia for helpful discussions and constant encouragement.

%======
\section{Preliminaries}\label{Sec-2}
\subsection{Notations}
Let $\{e_1,\ldots,e_{n+1}\}$ be the canonical basis of $\mathbb{R}^{n+1}$.
Consider the open half-space and its boundary
\eq{
\mathbb{R}^{n+1}_+
=\{x\in\mathbb{R}^{n+1}:x_1>0\},\quad
\p\mathbb{R}^{n+1}_+
=\{x\in\mathbb{R}^{n+1}:x_1=0\}.
}
Let $u$ be a smooth function defined on $\mathbb{R}^{n}_+$, and we denote its corresponding graph by
\eq{
\S\coloneqq\{(x,u(x)):x\in\mathbb{R}^{n}_+\},
}
which is a hypersurface in $\mathbb{R}^{n+1}_+$ with boundary $\p\S=\{(x,u(x)):x\in\p\mathbb{R}^n_+\}$.
The upwards-pointing unit normal of $\S\subset\mathbb{R}^{n+1}$, viewed as a vector field defined on $\mathbb{R}^n_+$, is given by
\eq{\label{eq:nu}
\nu(x)
=\frac{(-Du(x),1)}{\sqrt{1+\abs{Du(x)}^2}},\quad x\in\mathbb{R}^n_+,
}
where $Du(x)=(u_1(x),\ldots,u_n(x))$, and $u_i(x)\coloneqq\p_{x_i}u(x)$ for $i\in\{1,\ldots,n\}$.

If we write the map $\Phi:\mathbb{R}^n_+\ra\S, x\mapsto(x,u(x))$, then a basis of the tangent space of $\S$ is then given by $\{\tau_1,\ldots,\tau_n\}$, where
\eq{\label{defn:tau_i}
\tau_i(x)
=(\rd\Phi)_{x}(e_i)
=e_i+u_i(x)e_{n+1}.
}
The induced metric of $\S\subset\mathbb{R}^{n+1}$, denoted by $g$, and its inverse $g^{-1}$ are given by
\eq{
g_{ij}(x)
=\de_{ij}+u_i(x)u_j(x),\quad
g^{-1}_{ij}(x)
=\de_{ij}-\frac{u_i(x)u_j(x)}{{1+\abs{Du(x)}^2}},\quad x\in\mathbb{R}^n_+,\quad i,j\in\{1,\ldots,n\}.
}
%By regularity of $u$, we extend $\nu,g_{ij},$ and $g_{ij^{-1}}$ to $\p\mathbb{R}^n_+$.
%----
\iffalse
With these notations, we say that $u$ solves the well-known minimal surface equation on $\mathbb{R}^n_+$ if
\eq{\label{eq:MSE-intro}
{\rm div}\left(\frac{Du(x)}{\sqrt{1+\abs{Du(x)}^2}}\right)
=0,\quad x\in\mathbb{R}^{n}_+.
}
This is equivalent to the fact that the mean curvature of $\S\subset\mathbb{R}^{n+1}$, denoted by $H_\S$, is identically vanishing.
\fi
%----

%========
\subsection{Capillary minimal graph}
\begin{definition}\label{Defn:capillary-graph}
\normalfont
Let $\theta\in(0,\pi)$,
$u$ a smooth function on $\mathbb{R}^n_+$ and $\S$  its corresponding graph.
Then $\S$ is called a \textit{capillary graph} in $\mathbb{R}^{n+1}_+$, if there holds
\eq{\label{condi:capillary-bdry-1}
\left<\nu(x),e_1\right>
=\cos\theta,\quad\forall x\in\p\mathbb{R}^n_+,
}
which is equivalent to
\eq{\label{condi:capillary-bdry-2}
u_1(x)
=-\cos\theta\sqrt{1+\abs{Du(x)}^2},\quad\forall x\in\p\mathbb{R}^n_+.
}
If in addition $H_\S\equiv0$,
%or equivalently, $u$ solves the minimal surface equation \eqref{eq:MSE-intro} on $\mathbb{R}^n_+$,
then $\S$ is called a \textit{capillary minimal graph} in $\mathbb{R}^{n+1}_+$.
\end{definition}

Denote by $\mu$ the outer unit co-normal of $\p\S\subset \S$, which can also be viewed as a vector field defined on $\p\mathbb{R}^n_+$. It is easy to see that (see for example \cite{WW})
\begin{equation}\label{defn:mu1}
\mu(x)=\frac{-\sum_{i=2}^{n}u_i u_1 {\tau}_i+(1+|\bar Du|^2){\tau}_1}{\sqrt{(1+\abs{Du(x)}^2)(1+\abs{\bar Du(x)}^2)}}.
\end{equation}
%(\textcolor{red}{At page 1, $v_i$ was already defined by $\tau_i$}) where $v_i=(\bar e_i,u_i)\in \mathbb{R}^{n+1}$.
It holds that for any function $f\in C^1(\overline{\mathbb{R}^n_+})$,
\eq{\label{defn:mu}
\langle\mu, (Df, f_{n+1})\rangle
=\frac{(1+\abs{\bar Du(x)}^2)\frac{\p f}{\p x_1}-u_1(x)\sum_{i=2}^nu_i(x)\frac{\p f}{\p x_i}}{\sqrt{(1+\abs{Du(x)}^2)(1+\abs{\bar Du(x)}^2)}},
}
where $f_{n+1}=0$ and $$\bar Du(x)=(u_2(x),\ldots,u_n(x))$$ is the Euclidean gradient of $u$ restricted to the $(n-1)$-plane $\p\mathbb{R}^n_+$.
%Hence \eqref{condi:capillary-bdry-2} is the same as
%\begin{equation}
%\end{equation}
It is clear that  the capillary boundary condition \eqref{condi:capillary-bdry-1} is the same as
\eq{
\left<\mu(x),e_1\right>
=-\sin\theta,\quad\forall x\in\p\mathbb{R}^n_+,
} which is in fact the definition of a capillary hypersurface.

\begin{figure}[H]
	\centering
	\includegraphics[width=14cm]{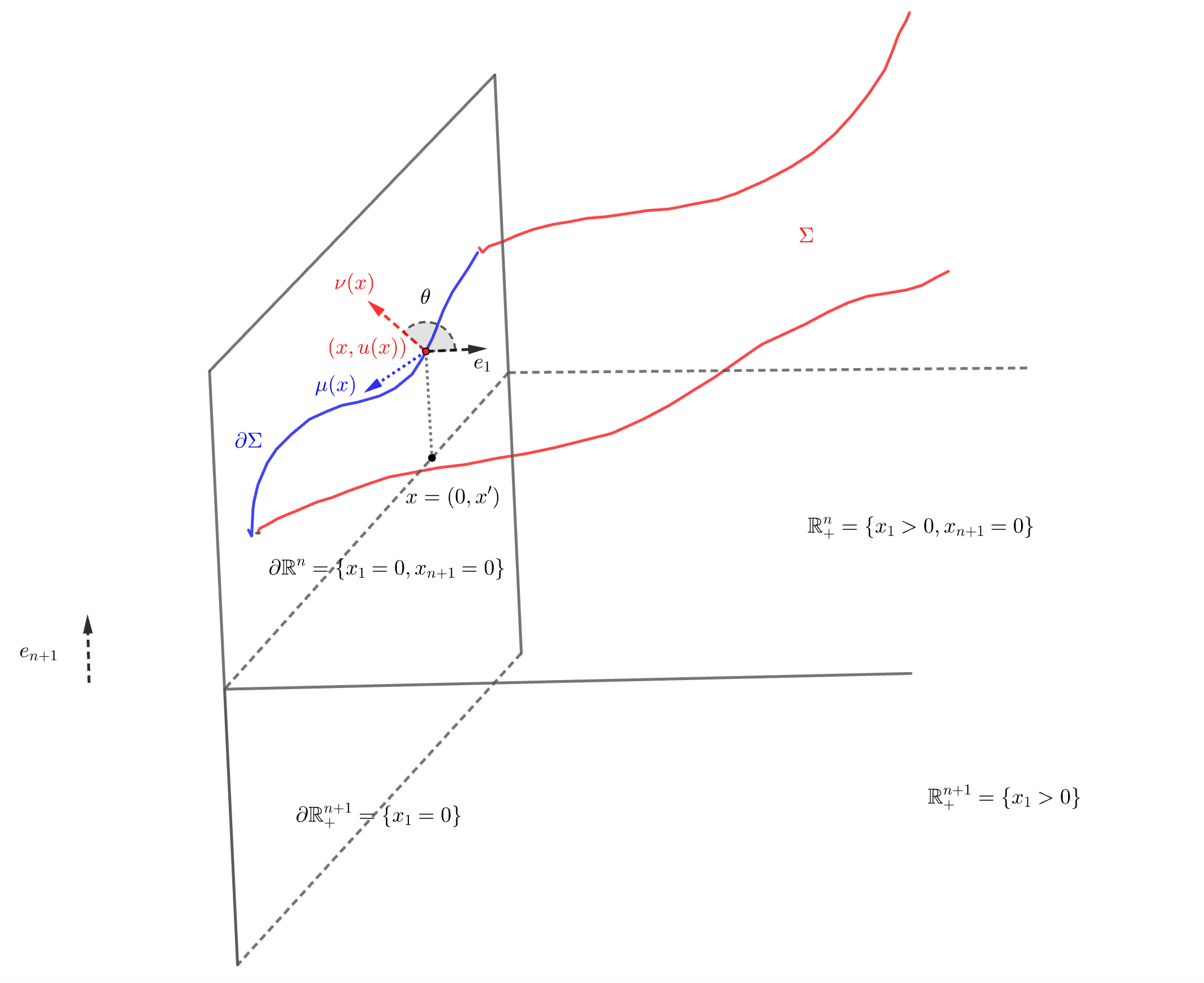}
	\caption{Capillary graph in $\mathbb{R}^{n+1}_+$}
	\label{Fig-1}
\end{figure}

\begin{remark}\label{Rem:theta>pi/2}
\normalfont
In the case that $\theta=\frac\pi2$, $\S$ as in Definition \ref{Defn:capillary-graph} has free boundary at $\p\mathbb{R}^{n+1}_+$, namely, $\S$ meets $\p\mathbb{R}^{n+1}_+$ orthogonally.
Therefore after reflecting it across the supporting hyperplane, we recover the classical minimal surface equation over the whole Euclidean space $\mathbb{R}^n$, which is fully understood.
\end{remark}

%==========
\subsection{Capillarity meets anisotropy}\label{Sec:2-2-1}
We use an idea of De Philippis-Maggi \cite{DePM15}, that half-space capillary problem is essentially an anisotropic problem, which can be seen by the definition of the following so-called capillary gauge function:
\begin{definition}
\normalfont
Given $\theta\in(0,\pi)$, the gauge function
\eq{\label{defn:F_theta}
F_\theta(\xi)
\coloneqq\abs{\xi}-\cos\theta\left<\xi,e_1\right>,\quad\xi\in\mathbb{R}^{n+1},
}
is called a \textit{capillary gauge}, which is a smooth function on $\mathbb{R}^{n+1}\setminus\{0\}$.
\end{definition}

For a capillary graph $\S$ as in Definition \ref{Defn:capillary-graph}, its \textit{$F_\theta$-surface energy} is defined as
\eq{\label{defn:capillary-area-functional}
\int_\S F_\theta(\nu(p))\rd\mcH^n(p).
}
By  integration by parts, one can see that the $F_\theta$-surface energy is exactly  the Gauss free energy with respect to the capillary angle $\theta$, namely, (see e.g., \cite[Proposition 3.3]{LXZ23})
\eq{\label{eq:F_theta-energy-capillary-energy}
\int_\S F_\theta(\nu(p))\rd\mcH^n(p)
=\abs{\S}-\cos\theta\abs{\p\Om\cap\p\mathbb{R}^{n+1}_+},
}
where $\Om$ is the domain delimited by $\S$ and $\p\mathbb{R}^{n+1}_+$, and $-\cos\theta\abs{\p\Om\cap\p\mathbb{R}^{n+1}_+}$ is the so-called \textit{wetting energy}.
%Remark that when$F_\theta$ surface energy is the same as the capillary area\[|\Sigma|+ \cos \theta W_\Sigma,\] where $W_\Sigma$ is the so-called wetting energy, i.e., the volume of the domain in $\mathbb{R}^n$ enclosed by $\p\S$.

Since $\S$ is the graph of $u$, by area formula and \eqref{eq:nu}, we could further write \eqref{defn:capillary-area-functional} as
\eq{
\int_{\mathbb{R}^n_+}F_\theta(\nu(x))\sqrt{1+\abs{Du(x)}^2}\rd x
=\int_{\mathbb{R}^n_+}\sqrt{1+\abs{Du(x)}^2}+\cos\theta u_1(x)\rd x\eqqcolon A_\theta(u).
}
The integrand
\eq{\label{defn:v}
v(x)
\coloneqq\sqrt{1+\abs{Du(x)}^2}+\cos\theta u_1(x),
}
is then called \textit{graphical capillary  area element} of the graph $\S$ with respect to $F_\theta$. It is clear that a critical point of the functional $A_\theta$ corresponds to a capillary minimal graph.

With this point of view, one may generalize the classical calibration argument to the capillary settings.
This is well-known to experts, but missing in the literature, therefore we include it in Appendix \ref{App-1}.
Consequently, one may obtain a half-space Bernstein-type theorem for capillary minimal graph, as a corollary of \cite{HS23,MP21,LZZ24}.
We point out that such an idea relies significantly on the fact that the domain (of $u$) is a half-space.
For general domains, it is well-known that the calibration argument works in the free boundary case.
Performing a contradiction argument in the same spirit as Guang-Li-Zhou \cite{GLZ20}, one can obtain a Bernstein-type theorem for free boundary minimal graphs, which
we include in Appendix \ref{App-2}.

Now we collect some useful facts concerning the function $v$.

\begin{lemma}[Positive lower bound]\label{Lem:v-lower-bound}
Let $v$ be given as above, then $v(x)\geq\sin\theta>0$ for any $x\in\overline{\mathbb{R}^n_+}$.
In particular, $v(x)=\sin\theta$ if and only if $u_1(x)=-\cot\theta$ and $\abs{\bar Du(x)}=0$.
\end{lemma}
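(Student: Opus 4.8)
The plan is to prove the claim by a direct computation, exploiting that $v(x) = \sqrt{1+|Du(x)|^2} + \cos\theta\, u_1(x)$ is, pointwise in $x$, a function of the vector $Du(x) = (u_1, \bar Du) \in \mathbb{R}^n$, so it suffices to establish the purely algebraic inequality $\sqrt{1+|p|^2} + \cos\theta\, p_1 \geq \sin\theta$ for every $p = (p_1, p') \in \mathbb{R} \times \mathbb{R}^{n-1}$, with equality characterized as stated. Equivalently, in the notation of the lemma, I would show $v(x) = F_\theta(\nu(x))\sqrt{1+|Du(x)|^2}$ where $F_\theta$ is the capillary gauge from \eqref{defn:F_theta}, and then bound $F_\theta$ from below; but the elementary route below is cleaner.

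First I would observe that $\sqrt{1+|p|^2} \geq \sqrt{1 + p_1^2}$, with equality if and only if $p' = 0$, i.e. $|\bar Du(x)| = 0$. Hence
\eq{
v(x) = \sqrt{1+|Du(x)|^2} + \cos\theta\, u_1(x) \geq \sqrt{1+u_1(x)^2} + \cos\theta\, u_1(x).
}
So the problem reduces to the one-variable estimate: for $t \in \mathbb{R}$, one has $\sqrt{1+t^2} + \cos\theta\, t \geq \sin\theta$. To see this, set $h(t) = \sqrt{1+t^2} + \cos\theta\, t$; then $h'(t) = \frac{t}{\sqrt{1+t^2}} + \cos\theta$, which vanishes precisely when $\frac{t}{\sqrt{1+t^2}} = -\cos\theta$, that is, when $t = -\cot\theta$ (using $\sin\theta > 0$ for $\theta \in (0,\pi)$, so that $\frac{1}{\sqrt{1+\cot^2\theta}} = \sin\theta$). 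Since $h'' > 0$ everywhere, $h$ is strictly convex and attains its global minimum at $t = -\cot\theta$, with value $h(-\cot\theta) = \frac{1}{\sin\theta} - \cos\theta\cdot\frac{\cos\theta}{\sin\theta} = \frac{1-\cos^2\theta}{\sin\theta} = \sin\theta$. This proves $v(x) \geq \sin\theta > 0$ for all $x \in \overline{\mathbb{R}^n_+}$.

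For the equality case, both inequalities used must be equalities: $\sqrt{1+|Du(x)|^2} = \sqrt{1+u_1(x)^2}$ forces $|\bar Du(x)| = 0$, and strict convexity of $h$ forces $u_1(x) = -\cot\theta$. Conversely, if $u_1(x) = -\cot\theta$ and $\bar Du(x) = 0$, then $|Du(x)|^2 = \cot^2\theta$ and a direct substitution gives $v(x) = \sqrt{1+\cot^2\theta} - \cos\theta\cot\theta = \frac{1}{\sin\theta} - \frac{\cos^2\theta}{\sin\theta} = \sin\theta$. This completes the characterization. There is no real obstacle here; the only thing to be careful about is the sign/branch conventions — that $\sin\theta > 0$ on the whole range $(0,\pi)$, which is what makes $\cot\theta$ the correct critical point and keeps the bound positive — and noting that the argument is genuinely pointwise, so no PDE or boundary condition is used (the minimal-surface and capillary structure enter only later).
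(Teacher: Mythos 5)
Your proof is correct and follows essentially the same route as the paper: reduce to the one-variable function $h(t)=\sqrt{1+t^2}+\cos\theta\, t$ via $\sqrt{1+|Du|^2}\geq\sqrt{1+u_1^2}$, and then minimize $h$ to find the minimum $\sin\theta$ at $t=-\cot\theta$. You simply spell out the ``elementary computation'' (finding the critical point, checking convexity, verifying the equality case) that the paper leaves implicit.
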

\begin{proof}
By elementary computation, the one variable function $\sqrt{1+t^2}+\cos\theta t$ has minimal $\sin\theta$ on $\mathbb{R}$, and is attained only at $t=-\cot\theta$.
Thus
\eq{
v(x)
=\sqrt{1+\abs{Du(x)}^2}+\cos\theta u_1(x)
\geq\sqrt{1+(u_1(x))^2}+\cos\theta u_1(x)
\geq\sin\theta,
}
with equality holds if and only if $u_1(x)=-\cot\theta$ and $\abs{\bar Du(x)}=0$.
\end{proof}
The following lemma is essentially proved in \cite{Uralcprime,Gerhardt76}
\begin{lemma} \label{good_property}
Let $u$ be a smooth function on $\mathbb{R}^n_+$ and $\S$  its corresponding graph.
If $\S$ is a capillary graph in the sense of Definition \ref{Defn:capillary-graph}, then there holds pointwisely on $\p\mathbb{R}^n_+$
\eq{\label{eq:na-v-mu=0}
\left<\na v(x),\mu(x)\right>
=0,
}
where $\na$ is the intrinsic gradient of $\S$.
\end{lemma}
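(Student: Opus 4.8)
The plan is to verify \eqref{eq:na-v-mu=0} by a direct computation, reducing the intrinsic gradient $\nabla v$ of $\S$ to Euclidean partial derivatives of $v$ on $\mathbb{R}^n_+$ and then pairing against the explicit formula \eqref{defn:mu1} for the co-normal $\mu$. First I would recall that for a function $f$ defined on $\mathbb{R}^n_+$ and pulled back to $\S$, its intrinsic gradient satisfies $\langle\nabla f,\tau_i\rangle = \partial_{x_i}f$, so that $\nabla f = \sum_{i,j} g^{ij} (\partial_{x_i} f)\,\tau_j$. Using this together with \eqref{defn:mu} (which computes precisely $\langle\mu,(Df,0)\rangle$ for $f\in C^1(\overline{\mathbb{R}^n_+})$, and whose right-hand side coincides with $\langle\nabla f,\mu\rangle$ because $\mu$ is tangent to $\S$), the identity \eqref{eq:na-v-mu=0} becomes the pointwise statement on $\p\mathbb{R}^n_+$ that
\eq{
(1+\abs{\bar Du}^2)\,\partial_{x_1}v - u_1\sum_{i=2}^n u_i\,\partial_{x_i}v = 0.
}

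The next step is to compute $\partial_{x_k}v$ for $v=\sqrt{1+\abs{Du}^2}+\cos\theta\,u_1$. One finds
\eq{
\partial_{x_k}v = \frac{\sum_{j=1}^n u_j u_{jk}}{\sqrt{1+\abs{Du}^2}} + \cos\theta\,u_{1k},
}
so everything is expressed through the Hessian $u_{jk}$. The crucial input is that the capillary boundary condition \eqref{condi:capillary-bdry-2} holds along all of $\p\mathbb{R}^n_+$, hence can be differentiated tangentially: for $k\in\{2,\dots,n\}$,
\eq{
u_{1k} = -\cos\theta\,\frac{\sum_{j=1}^n u_j u_{jk}}{\sqrt{1+\abs{Du}^2}}\qquad\text{on }\p\mathbb{R}^n_+.
}
Substituting this into the expression for $\partial_{x_k}v$ shows that for tangential indices $k\ge 2$ one has $\partial_{x_k}v = \sin^2\theta\,\dfrac{\sum_j u_j u_{jk}}{\sqrt{1+\abs{Du}^2}}$, while on the boundary $v=\sin\theta\sqrt{1+\abs{Du}^2}$ so that $\partial_{x_1}v$ can also be rewritten; I would then plug these into the displayed linear combination above and check it collapses to zero, using once more \eqref{condi:capillary-bdry-2} to eliminate $u_1$ in favor of $-\cos\theta\sqrt{1+\abs{Du}^2}$. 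It is essentially a bookkeeping exercise in which the factor $\sin^2\theta+\cos^2\theta=1$ makes the terms cancel.

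The main obstacle, such as it is, is purely organizational rather than conceptual: one must be careful that the differentiated boundary condition is only valid for \emph{tangential} derivatives (indices $2,\dots,n$), not for $\partial_{x_1}$, and one must keep the normal-direction term $\partial_{x_1}v$ intact and handle it via the identity $v|_{\p\mathbb{R}^n_+}=\sin\theta\sqrt{1+\abs{Du}^2}$ rather than by differentiating the boundary condition. A cleaner, essentially computation-free alternative that I would mention is the anisotropic viewpoint of Subsection \ref{Sec:2-2-1}: $v(x)=F_\theta(\nu(x))\sqrt{1+\abs{Du}^2}$, and a capillary graph is exactly one whose $F_\theta$-normal $\nu_{F_\theta} := DF_\theta(\nu)$ is tangent to $\p\mathbb{R}^{n+1}_+$ along $\p\S$; since $\mu\perp\p\mathbb{R}^{n+1}_+$-component is fixed, the gradient of the area element in the $\mu$-direction is governed by the anisotropic Neumann condition, which is automatically satisfied — this is the structural reason the lemma holds, as already observed in \cite{Uralcprime,Gerhardt76}. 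Either route yields \eqref{eq:na-v-mu=0}.
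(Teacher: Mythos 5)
Your overall strategy — reduce $\left<\na v,\mu\right>$ to $\mu(v)$ via \eqref{defn:mu}, express $\partial_{x_k}v$ through the Hessian of $u$, and then use the tangential derivatives of the capillary condition — is the same one the paper uses, just organized differently. The paper packages the computation more efficiently: observing that $\sum_k(\tilde a_k+\cos\theta\de_{1k})\partial_{x_k}$ is a tangential operator, it shows directly that $g^{1j}\partial_{x_j}v=0$ and then expands this single quantity to recover $\mu(v)$. Your version instead expands $\mu(v)$ term-by-term, which also works but requires more careful tracking of cancellations.

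There are, however, two concrete problems in your write-up. First, a factual error: on $\p\mathbb{R}^n_+$ the boundary condition $u_1=-\cos\theta W$ gives $v=W+\cos\theta u_1=W-\cos^2\theta\,W=\sin^2\theta\,W$, not $\sin\theta\,W$ as you state (and note that $\sin\theta W$ would in general contradict Lemma \ref{Lem:v-lower-bound}, which only forces $v\ge\sin\theta$). Second, and more importantly, the claim that $\partial_{x_1}v$ ``can also be rewritten'' from the boundary identity for $v$ is a conceptual slip: $v|_{\p\mathbb{R}^n_+}=\sin^2\theta\,W$ holds only on the hyperplane $\{x_1=0\}$ and cannot be differentiated in the normal direction $x_1$. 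The correct way to simplify $\partial_{x_1}v$ is to keep its everywhere-valid expression
\eq{
\partial_{x_1}v=\frac{\sum_{j}u_ju_{j1}}{W}+\cos\theta\,u_{11},
}
and then substitute the \emph{pointwise} condition $u_1=-\cos\theta W$ into it; the two $u_{11}$-terms cancel, leaving $\partial_{x_1}v=\frac1W\sum_{j\geq2}u_ju_{j1}$ on $\p\mathbb{R}^n_+$. With that, and with $\partial_{x_i}v=\sin^2\theta\,\frac{\sum_j u_ju_{ji}}{W}$ for $i\ge2$ (from the tangential derivative of the boundary condition), the combination $(1+|\bar Du|^2)\partial_{x_1}v-u_1\sum_{i\ge2}u_i\partial_{x_i}v$ vanishes, but only after you use the differentiated boundary condition a \emph{second} time to relate $\sum_{i\geq2}u_iu_{1i}$ to the double sum $\sum_{i,j\geq2}u_iu_ju_{ij}$, and use $\sin^2\theta\,W^2=1+|\bar Du|^2$ (again a consequence of the pointwise condition). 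This is genuinely more than ``bookkeeping in which $\sin^2\theta+\cos^2\theta=1$ makes the terms cancel'' — please carry it out in full, or adopt the paper's cleaner device of showing $g^{1j}\partial_{x_j}v=0$ directly. Finally, the alternative anisotropic argument you sketch at the end is too vague to count as a proof: the statement that ``the gradient of the area element in the $\mu$-direction is governed by the anisotropic Neumann condition'' needs to be made precise, and as written it merely restates the lemma rather than proving it.
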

\begin{proof} Recall that $\mu$ is the outer unit co-normal of $\partial \Sigma \subset \Sigma$, 
given in \eqref{defn:mu}.
For ease of notation we put $\tilde a_i(x)=\frac{u_i(x)}{\sqrt{1+\abs{Du(x)}^2}}$.
Here we adopt the convention $u_i(x)=\p_{x_i}u(x)$ and $u_{ij}=\p^2_{x_ix_j}u$.

On $\p\mathbb{R}^n_+$, $\tilde a_1+\cos\theta\equiv0$ by \eqref{condi:capillary-bdry-2}, and hence at any $x\in\p\mathbb{R}^n_+$,
\eq{
\sum_{k=1}^n(\tilde a_k+\cos\theta\de_{1k})\frac{\p}{\p x_k}
=\sum_{k=2}^n\tilde a_k\frac{\p}{\p x_k},
}
which is a tangential operator on $\p\mathbb{R}^n_+$.
Hence, \eq{\label{eq:tanential derivative zero}
0
=\sum_{k=1}^n(\tilde a_k+\cos\theta\de_{1k})\frac{\p}{\p x_k}\left(\tilde a_1+\cos\theta\right).
}
Rewriting $\tilde a_i(x)=\frac{p_i(x)}{\sqrt{1+\abs{p(x)}^2}}$ with $p(x)=(p_1(x),\ldots,p_n(x))=Du(x)$,  by chain rules we have
\eq{\label{eq_a1}
\frac{\p \tilde a_i}{\p p_j}(x)
=\frac1{\sqrt{1+\abs{p(x)}^2}}(\de_{ij}-\frac{p_i(x)p_j(x)}{1+\abs{p(x)}^2})
=\frac1{\sqrt{1+\abs{p(x)}^2}}g^{ij}(x)
\eqqcolon \tilde a_{ij}(x),
}
and
\eq{
\frac{\p \tilde a_i}{\p x_l}(x)
=\frac{\p \tilde a_i}{\p p_j}(x)u_{jl}(x)
=\tilde a_{ij}(x)u_{jl}(x).
}
Together with \eqref{eq:tanential derivative zero}, it follows
\eq{\label{eq:a_ka_1ju_jk}
0
=\sum_{k=1}^n(\tilde a_k+\cos\theta\de_{1k})\frac{\p}{\p x_k}\left(\tilde a_1+\cos\theta\right)
=\sum_{k=1}^n(\tilde a_k+\cos\theta\de_{1k})(\tilde a_{1j}u_{jk}).
}
On the other hand, at any $x\in\p\mathbb{R}^n_+$, there holds (note that $\tilde a_i=(1+\abs{Du}^2)^{-\frac12}u_i$ by definition)
\eq{
\frac{\p v}{\p x_j}
=\sum_{k=1}^n(1+\abs{Du}^2)^{-\frac12}u_ku_{jk}+\cos\theta u_{j1}
=\sum_{k=1}^n(\tilde a_k+\cos\theta\de_{1k})u_{jk}.
}
It then follows (recall that $\tilde a_{ij}(x)=\frac{g^{ij}(x)}{\sqrt{1+\abs{Du(x)}^2}}$ by \eqref{eq_a1})
\eq{
(1+\abs{Du}^2)^{-\frac12}g^{1j}\partial_{x_j}v
=\tilde a_{1j}\left(\sum_{k=1}^n(\tilde a_k+\cos\theta\de_{1k})u_{jk}\right)
=0,
}
where we have used \eqref{eq:a_ka_1ju_jk} for the last equality.
This implies that $g^{1j}v_j\equiv0$ on $\p\mathbb{R}^n_+$.

Finally, taking \eqref{defn:mu} into account, we thus find
\eq{
0
=g^{1j}\partial_{x_j}v
=&(\de_{1j}-\frac{u_1u_j}{1+\abs{Du}^2})\partial_{x_j}v
=\partial_{x_1}v-\sum_{i=1}^n\frac{u_1u_i}{1+\abs{Du}^2}\partial_{x_i}v\\
=&\frac{1+\abs{\bar Du}^2}{1+\abs{Du}^2}\partial_{x_1}v-\sum_{i=2}^n\frac{u_1u_i\partial_{x_i}v}{1+\abs{Du}^2}
=\frac{\mu(v)}{\sqrt{1+\abs{Du}^2}}\sqrt{1+\abs{\bar Du}^2},
}
namely, $\mu(v)\equiv0$ on $\p\mathbb{R}^n_+$.
The assertion then follows since on $\p\mathbb{R}^n_+$,
\eq{
\left<\na v,\mu\right>
=\mu(v).
}
\end{proof}

%========
\section{Gradient estimates}\label{Sec-3}

Consider the mean curvature equation on $\mathbb{R}^n_+$:
\eq{\label{eq:MSE}
a^{ij}u_{ij}
\coloneqq(W^{2}\delta_{ij}-u_{i}u_{j})u_{ij}=HW^3,
%a^{ij}=W^{2}\delta_{ij}-u_{i}u_{j},
}
where $W=\sqrt{1+|Du|^{2}}$,
with capillary boundary condition: $\frac{u_{1}}{W}=-\cos\theta$ on $\p\mathbb{R}^n_+$.
Or equivalently, the graph $\S$ corresponding to $u$ is a capillary graph in $\mathbb{R}^n_+$ in the sense of Definition \ref{Defn:capillary-graph}.
$H(x)$ denotes the prescribed mean curvature function.

%======
\subsection{Gradient estimates for mean curvature equation}\label{Sec-3-1}

\begin{lemma}[Cut-off functions]\label{Lem:cut-off}
Let $\theta\in(0,\pi)$.
For any $r>0$, define the (ellipsoids) sets on $\mathbb{R}^n_+$:
\eq{
E_r
\coloneqq&\left\{(x_1,x'):x_1>0, (x_1-\abs{\cos\theta} r)^2+\sin^2\theta\abs{x'}^2< r^2\right\},\\
E_{\theta, r}
\coloneqq&\left\{(x_1,x'):x_1>0, (x_1-\abs{\cos\theta} r)^2+\sin^2\theta\abs{x'}^2<\left(\frac{1+\abs{\cos\theta}}{2}r\right)^2\right\},
}
then $E_{\theta,r}\subset E_r$, with $\lim_{r\rightarrow \infty}E_r=\mathbb{R}^n_+$, and $\lim_{r\rightarrow\infty}E_{\theta, r}=\mathbb{R}^n_+$.

The cut-off function $\psi$ defined as
\eq{
\psi(x)
=\left(1-\frac{(x_1-\abs{\cos\theta} r)^2+\sin^2\theta\abs{x'}^2}{r^2}\right)^2,
}
satisfies (write for simplicity $\p_{rel}E_r=\overline{\p E_r\cap\mathbb{R}^n_+}$ as the relative boundary of $E_r$ in $\mathbb{R}^n_+$)
\eq{
%\frac{1-\abs{\cos\theta}}2
\left(1-\frac{(1+\abs{\cos\theta})^2}4\right)^2
<\psi\leq 1\text{ in }E_{\theta,r},\quad
\psi\equiv0\text{ on }\p_{rel} E_r,\quad
\frac{\p\psi}{\p x_1}
=4\psi^{\frac12}\frac{\abs{\cos\theta}}r\text{ on }\p\mathbb{R}^n_+.
}
Moreover, there exists a positive constant $c_{n,\theta}$, depending only on $n,\theta$, such that in $E_r$ there hold
\eq{
\abs{D\psi}
\leq 4\frac{\psi^{\frac12}}r,\quad
\abs{D^2\psi}
\leq c_{n,\theta}\frac1{r^2}.
}
\end{lemma}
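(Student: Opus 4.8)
The plan is to verify the three displayed properties of the cut-off function $\psi$ by direct computation, treating the quadratic form $Q(x)=1-\frac{(x_1-\abs{\cos\theta}r)^2+\sin^2\theta\abs{x'}^2}{r^2}$ as the basic object and writing $\psi=Q^2$. First I would record that on $\overline{E_r}$ one has $0\le Q\le 1$ (since the ellipsoid is exactly the set where the subtracted fraction is less than $1$), with $Q=0$ precisely on $\p_{rel}E_r$; squaring gives $0\le\psi\le1$ on $\overline{E_r}$ and $\psi\equiv0$ on $\p_{rel}E_r$. On $E_{\theta,r}$ the fraction is at most $\big(\tfrac{1+\abs{\cos\theta}}{2}\big)^2$, so $Q>1-\tfrac{(1+\abs{\cos\theta})^2}{4}>0$ there, and squaring yields the stated lower bound $\big(1-\tfrac{(1+\abs{\cos\theta})^2}{4}\big)^2<\psi\le1$; note $1+\abs{\cos\theta}<2$ guarantees positivity.

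Next I would compute the gradient. We have $DQ=-\frac{1}{r^2}\big(2(x_1-\abs{\cos\theta}r),\,2\sin^2\theta\,x'\big)$, hence $D\psi=2Q\,DQ$. On $\p\mathbb{R}^n_+$, i.e. $x_1=0$, the first component of $DQ$ is $-\frac{1}{r^2}(-2\abs{\cos\theta}r)=\frac{2\abs{\cos\theta}}{r}$, so $\p_{x_1}\psi=2Q\cdot\frac{2\abs{\cos\theta}}{r}=4\psi^{1/2}\frac{\abs{\cos\theta}}{r}$, using $Q=\psi^{1/2}\ge0$; this is the third claimed identity. For the gradient bound, on $E_r$ the point $(x_1-\abs{\cos\theta}r,\sin\theta\,x')$ lies in the ball of radius $r$, so $\abs{DQ}^2=\frac{4}{r^4}\big((x_1-\abs{\cos\theta}r)^2+\sin^4\theta\abs{x'}^2\big)\le \frac{4}{r^4}\big((x_1-\abs{\cos\theta}r)^2+\sin^2\theta\abs{x'}^2\big)\le\frac{4}{r^2}$ (here using $\sin^4\theta\le\sin^2\theta$), whence $\abs{DQ}\le\frac{2}{r}$ and $\abs{D\psi}=2Q\abs{DQ}\le 4\frac{\psi^{1/2}}{r}$.

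Finally, for the Hessian bound I would differentiate once more: $D^2\psi=2\,DQ\otimes DQ+2Q\,D^2Q$, where $D^2Q$ is the constant diagonal matrix $-\frac{2}{r^2}\mathrm{diag}(1,\sin^2\theta,\dots,\sin^2\theta)$. Thus $\abs{D^2\psi}\le 2\abs{DQ}^2+2\abs{Q}\,\abs{D^2Q}\le 2\cdot\frac{4}{r^2}+2\cdot1\cdot\frac{2\sqrt{n}}{r^2}\le\frac{c_{n,\theta}}{r^2}$ for a suitable constant $c_{n,\theta}$ depending only on $n$ and $\theta$ (any explicit bound such as $8+4\sqrt n$ works; one can track $\sin\theta$ if a sharper constant is wanted). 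The containment statements $E_{\theta,r}\subset E_r$ and the limits as $r\to\infty$ are immediate from the definitions: $\big(\tfrac{1+\abs{\cos\theta}}{2}r\big)<r$ since $\abs{\cos\theta}<1$, and both ellipsoids exhaust $\mathbb{R}^n_+$ because their semi-axes in every direction (namely $(1+\abs{\cos\theta})r$ or $\tfrac{(1+\abs{\cos\theta})^2}{2\sin\theta}r$ in the $x_1$-direction, and the corresponding ones scaled by $\sin\theta$ in the $x'$-directions) tend to infinity while each contains the origin's neighborhood growing with $r$. There is no real obstacle here; the only mild care needed is the bookkeeping of constants in the Hessian estimate and the observation $\sin^4\theta\le\sin^2\theta$ that makes the gradient bound clean, so I would present these computations compactly rather than in full detail.
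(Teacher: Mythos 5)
Your proposal is correct and takes the same direct-computation route the paper uses (the paper merely records the first derivatives of $\psi$ and leaves the rest implicit); you fill in the bounds with the same choice of $Q$ and $\psi=Q^2$ and the same observation $\sin^4\theta\le\sin^2\theta$ needed for $\abs{DQ}\le 2/r$ on $E_r$. The only slip is in your parenthetical listing of semi-axes: $\tfrac{(1+\abs{\cos\theta})^2}{2\sin\theta}r$ is not a semi-axis of $E_{\theta,r}$ (the $x_1$ semi-axis is $\tfrac{1+\abs{\cos\theta}}{2}r$, the $x'$ ones are $\tfrac{1+\abs{\cos\theta}}{2\sin\theta}r$); the exhaustion itself is nonetheless correct, since for fixed $x$ the quantity $(\tfrac{x_1}{r}-\abs{\cos\theta})^2+\sin^2\theta\tfrac{\abs{x'}^2}{r^2}\to\abs{\cos\theta}^2<\tfrac{(1+\abs{\cos\theta})^2}{4}$ as $r\to\infty$.
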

\begin{proof}
Direct computations show that on $\mathbb{R}^n_+$:
\eq{
\frac{\p\psi}{\p x_1}
=&4\left(1-\frac{(x_1-\abs{\cos\theta} r)^2+\sin^2\theta\abs{x'}^2}{r^2}\right)\left(-\frac{1}{r^2}(x_1-\abs{\cos\theta} r)\right),\\
\frac{\p\psi}{\p x_i}
=&4\left(1-\frac{(x_1-\abs{\cos\theta} r)^2+\sin^2\theta\abs{x'}^2}{r^2}\right)\left(-\frac{\sin^2\theta x_i}{r^2}\right),\quad i\in\{2,\ldots,n\},
}
the assertions then follow.
%from $\theta\in(\frac\pi2,\pi)$.
\end{proof}

\begin{theorem}\label{Thm:gradient-estimate}
Let $\theta\in(0,\pi)$,
let $u$ be a $C^2$-solution of the mean curvature equation \eqref{eq:MSE}, such that its graph $\S$ is a capillary graph in the sense of Definition \ref{Defn:capillary-graph}.
\begin{enumerate}
%-----i
    \item [($i$)] Assume that $\abs{H}+\abs{DH}\leq\mathbf{C}_H$ on $\mathbb{R}^n_+$ for some positive constant $\mathbf{C}_H$.
    If $\abs{\cos\theta}<\frac{\sqrt{3}}2$,
then for any $r>0$, there holds
\eq{\label{eq:Du-C_1-C_2-C_3-i}
\sup_{\overline{E_{\theta, r}}}\abs{Du}
\leq \frac{1}{1-|\cos\theta|}\exp\left(C_1+C_2\frac{M}{r}+C_3\frac{M^2}{r^2}\right),
}
where $M=\sup_{E_r}\abs{u(x)}+r$, $C_1$ depends only on $n,\theta,\mathbf{C}_H$ and $ M$, $C_2$ and $C_3$ depend only on $n,\theta,\mathbf{C}_H$.
%-----ii
    \item [($ii$)] Assume that $H\equiv0$ (i.e., $u$ solves the minimal surface equation).
    If $n=2,3$ and for general $\theta\in(0,\pi)$; or for $n\geq4$ and $\theta$ belongs to the range $\msU$, where $\msU$ was defined by \eqref{condi:msU}, 
then for any $r>0$, there holds
\eq{\label{eq:Du-C_1-C_2-C_3-ii}
\sup_{\overline{E_{\theta, r}}}\abs{Du}
\leq \frac{1}{1-|\cos\theta|}\exp\left(C_1+C_2\frac{M}{r}+C_3\frac{M^2}{r^2}\right),
}
 where $M=\sup_{E_r}\abs{u(x)}+r$ and $C_1,C_2$ and $C_3$ depend only on $n$ and $\theta$.
 %$C_2$ and $C_3$ depend only on $n,\theta$.
Moreover, suppose that $u$ has linear growth, namely, $\abs{u(x)}\leq C_{0}(1+\abs{x})$ for some constant $C_0>0$, we have
\eq{\label{conclu:Du-bdd}
\sup_{\overline{\mathbb{R}_{+}^{n}}}\abs{D u}
\leq\Lambda,
}
where $\Lambda$ depends only on $n, \theta, C_{0}$.
\end{enumerate}

\end{theorem}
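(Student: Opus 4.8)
The plan is to run a maximum-principle argument on the ellipsoid $E_r$ for the auxiliary function $G(x) = \varphi(u(x))\,\psi(x)\,\log v(x)$ defined in \eqref{defn:G-intro}, with $\varphi(u) = \frac{u}{2M}+1$ and $M = \sup_{E_r}\abs u + r$, so that $\varphi$ is pinned between $\frac12$ and $\frac32$ on $E_r$. Since $\psi \equiv 0$ on $\p_{rel} E_r$ and $\varphi > 0$, the maximum of $G$ over $\overline{E_r}$ is attained either at an interior point of $E_r$ or on the flat boundary $\p\mathbb R^n_+$. Once we have a bound $G(z_0) \le C_1 + C_2\frac Mr + C_3\frac{M^2}{r^2}$ at the maximum point $z_0$, evaluating $G$ at any point of $E_{\theta,r}$ (where $\psi$ is bounded below by the explicit constant of Lemma \ref{Lem:cut-off} and $\varphi \ge \frac12$) gives $\log v \le C(C_1 + C_2\frac Mr + C_3\frac{M^2}{r^2})$, and then $v \ge (1-\abs{\cos\theta})\sqrt{1+\abs{Du}^2}$ (elementary, from $\cos\theta\, u_1 \ge -\abs{\cos\theta}\abs{Du}$) converts this to the stated gradient estimate \eqref{eq:Du-C_1-C_2-C_3-i}–\eqref{eq:Du-C_1-C_2-C_3-ii}. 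For part ($ii$) with $H\equiv 0$ and linear growth $\abs{u(x)} \le C_0(1+\abs x)$, we then let $r\to\infty$: since $M/r \to C_0(1+o(1))$ stays bounded and $E_{\theta,r}\to\mathbb R^n_+$, the right-hand side stays bounded, yielding \eqref{conclu:Du-bdd}.

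I would organize the argument into three steps, following the skeleton indicated in the introduction. \textbf{Step 1 (boundary case).} Suppose $z_0 \in \p\mathbb R^n_+$. Here the key structural fact is Lemma \ref{good_property}: $\langle \na v, \mu\rangle = 0$ along $\p\mathbb R^n_+$. Using this together with $\na G = 0$ in the tangential directions and the Hopf-type inequality $\langle\na G,\mu\rangle \ge 0$ at a boundary maximum, one computes $\mu(\log G)$ at $z_0$ and finds that the only surviving term comes from $\mu(\log\psi)$. The cut-off $\psi$ was engineered precisely so that $\frac{\p\psi}{\p x_1} = 4\psi^{1/2}\frac{\abs{\cos\theta}}{r}$ on $\p\mathbb R^n_+$; combined with the co-normal formula \eqref{defn:mu} and the capillary condition $u_1 = -\cos\theta\, W$, this forces $\mu(\psi) < 0$ at an interior-of-$\partial\mathbb R^n_+$ point unless $v$ is already controlled — i.e. the Hopf inequality is violated unless $\abs{Du}(z_0)$ is bounded by a constant depending on $n,\theta$ and $M/r$. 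This is the step where the cut-off function's special shape pays off, and it handles the boundary alternative for all $\theta\in(0,\pi)$.

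\textbf{Steps 2 and 3 (interior case).} Suppose $z_0$ is an interior point. Then $g^{ij}(\log G)_{ij}(z_0) \le 0$, which expands as in \eqref{ineq:intro-max-interior}. Using the minimal surface equation $g^{ij}u_{ij} = 0$ (or the bound $\abs H + \abs{DH} \le \mathbf C_H$ in part ($i$)), the Codazzi-type identities for $v$ on the graph, and the bounds on $\psi, D\psi, D^2\psi$ from Lemma \ref{Lem:cut-off}, one estimates the $\varphi$-terms and $\psi$-terms by $O(M^{-1}) + O(r^{-1}\abs{Du}^{-1}) + O(r^{-2})$ after multiplying through by appropriate powers; the genuinely delicate part is the $v$-Hessian term $g^{ij}\big(\frac{v_{ij}}{v\log v} - \frac{(1+\log v)v_iv_j}{(v\log v)^2}\big)$. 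Here the appearance of $\cos\theta\, u_1$ inside $v$ produces, after diagonalizing and using the minimality relation among the $u_{ij}$, a term that can be absorbed into the good negative quadratic $-g^{ij}v_iv_j/(v\log v)^2$ only when the coefficient $\abs{\cos\theta}$ is not too large — this is exactly the threshold $\abs{\cos\theta} < \frac{\sqrt3}{2}$ in part ($i$). \textbf{The main obstacle} is precisely this absorption: in part ($ii$) one must squeeze out a sharper inequality. For $n=2,3$ the algebra closes for all $\theta$; for $n\ge 4$ one runs the Cauchy–Schwarz/diagonalization estimate more carefully (the \textbf{Claim} of \textbf{Step 3} and the set $\msB$ alluded to in the introduction), trading dimension against the size of $\abs{\cos\theta}^2$, which produces the constraint $\abs{\cos\theta}^2 < \frac{(3n-7)(n-1)}{4(n-2)^2}$ defining $\msU(n)$. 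Once the bad term is dominated, collecting all contributions and recalling $\varphi,\psi \asymp 1$ near $z_0$ gives $\log v(z_0) \le C_1 + C_2\frac Mr + C_3\frac{M^2}{r^2}$, completing the interior alternative and hence the proof.
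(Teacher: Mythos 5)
Your proposal correctly reconstructs the paper's argument at the level of strategy and the key steps: the same auxiliary function $G=\varphi\psi\log v$ on $E_r$, the same dichotomy between a boundary maximum (Hopf-type inequality combined with Lemma~\ref{good_property} and the specially engineered boundary normal derivative of $\psi$) and an interior maximum (second-order maximality \eqref{ineq:intro-max-interior} after diagonalizing and using \eqref{eq:MSE}), and the same source of the angle restriction — the competition in the coefficients of $u_{ii}^2$ giving $\abs{\cos\theta}<\tfrac{\sqrt3}{2}$ in {\bf Step 2}, sharpened to the set $\msU(n)$ by the algebraic {\bf Claim} in {\bf Step 3}. This is essentially the paper's own proof; the one small imprecision (attributing the absorption to the negative quadratic $-g^{ij}v_iv_j$ rather than to the competing positive contributions from $g^{ij}v_{ij}$) does not change the argument.
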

\begin{proof}[Proof of Theorem \ref{Thm:gradient-estimate} (Theorem \ref{Thm:intro-gradient-linear-growth})]

Recalling Remark \ref{Rem:theta>pi/2},
in the following we only consider those $\theta\in(0,\pi)\setminus\{\frac\pi2\}$.

We continue to use the notations in Lemma \ref{Lem:cut-off}.
For any fixed  $r>0$, we consider the function:
\eq{
G(x)=\varphi(u(x))\psi(x)\log v(x),
}
where $\varphi(s)=\frac{s}{2M}+1$ with $M=\sup_{E_{r}}\abs{u}+r$, and (recall \eqref{defn:v})
\eq{\label{eq:v}
v
=W+\cos\theta u_1.
%=W-\abs{\cos\theta}u_{1}.
}
Let $z_0\in\overline{E_r}$ be such that
\eq{
\max_{\overline{E_r}}G
=G(z_0).
}

Observe that if $\sup_{\overline{E_r}}\abs{Du}$ is sufficiently large, then $\sup_{\overline{E_r}}v$ is also sufficiently large by $v\ge (1-|\cos\theta|)\sqrt{1+|Du|^2}$.
Note that on $\overline{E_{\theta,r}}\subset\overline{E_r}$, by Lemma \ref{Lem:cut-off}
\begin{equation}\label{Basic properties of auxiliary functions}
\frac12<\varphi(u(x))<\frac32,\quad
\left(1-\frac{(1+\abs{\cos\theta})^2}4\right)^2\leq\psi\leq1.
\end{equation}
Hence 
we may assume that $G(z_0)$ is positive and sufficiently large, otherwise there is nothing to prove.
In this case, $z_0\notin\p_{rel} E_r$ by construction of the cut-off function $\psi$.
Also, we may assume
\eq{\label{ineq:psi-Du-(z_0)}
\psi(z_0)\abs{Du(z_0)}\geq1.
}

\noindent{\bf Step 1. 
We deal with the case that the maximum point $z_0\in\p E_r\setminus\p_{rel}E_r$.
}

%Assume that $z_0\in\p E_r\setminus\p_{rel}E_r$.
Thanks to \eqref{condi:capillary-bdry-2} there holds
\eq{\label{eq:a^i1-u_i}
a^{i1}u_{i}
=W^{2}u_{1}-\sum u_{i}^{2}u_{1}=u_{1}
=%\abs{\cos\theta}
-\cos\theta W\quad \text{on}\quad  \p\mathbb{R}_{+}^{n}.
}
From \eqref{condi:capillary-bdry-2}, we have
$\cos^2\theta(1+\abs{\bar D u}^2+u_1^2)=u_1^2$,
and hence
\eq{\label{eq:bar-Du-u_1-bdry}
\abs{\cos\theta}\sqrt{(1+\abs{\bar Du}^2)}
=\sin\theta\abs{u_1}.
}
Then, by Lemma \ref{Lem:cut-off} and the fact  that $\abs{x'}\le r$ on $\p E_r\setminus\p_{rel}E_r$ (on which $x_1=0$), we estimate
\eq{\label{eq:a^i1-psi_i}
a^{i1}\frac{\psi_i}{\psi}
&=\frac{W^2}{\psi}\left(\psi_1-\frac{\sum_{i=1}^nu_iu_1\psi_i}{W^2}\right)\\
&=\frac{W^2}{\psi}\left(\psi_1 \frac{1+\abs{\bar Du}^2}{W^2}+\frac{\sum_{i=2}^nu_iu_1\frac{4\sin^2\theta x_i}{r^2}\psi^{\frac12}}{W^2}\right)\\
&=\frac{1}{r\psi^{\frac12}}\left(4\abs{\cos\theta}(1+\abs{\bar Du}^2)+4\sin^2\theta\sum_{i=2}^nu_iu_1\frac{x_i}{r}\right)\\
&\ge\frac{1}{r\psi^{\frac12}}\left(4\abs{\cos\theta}(1+\abs{\bar Du}^2)-4\sin^2\theta\abs{u_1}\abs{\bar Du}\right)\\
&>\frac{1}{r\psi^{\frac12}}\left(4\abs{\cos\theta}\sqrt{(1+\abs{\bar Du}^2)}\abs{\bar D u}-4\sin^2\theta\abs{u_1}\abs{\bar Du}\right)\\
&=\frac4{r\psi^{\frac12}}\abs{\cos\theta}(1-\sin\theta)\sqrt{(1+\abs{\bar Du}^2)}\abs{\bar D u}
>0.
}
Thus, at $z_0$, we have (recall \eqref{eq:na-v-mu=0}, we have $a^{ij}v_{i}(x_{1})_{j}=0$)
\eq{
%-----1
0\ge a^{ij}(\log G)_{i}(x_{1})_{j}
=&a^{i1}\left(\frac{\varphi'u_{i}}{\varphi}+\frac{v_{i}}{v\log v}+\frac{\psi_i}{\psi}\right)
=a^{i1}\frac{\psi_i}{\psi}+a^{i1}\frac{\varphi'}{\varphi}u_{i}\\
%----2
\overset{\eqref{eq:a^i1-u_i},\eqref{eq:a^i1-psi_i}}{\ge}&\frac4{r\psi^{\frac12}}\abs{\cos\theta}(1-\sin\theta)\sqrt{(1+\abs{\bar Du}^2)}\abs{\bar D u}-\frac{1}{2M\varphi}(\abs{\cos\theta} W).
}
Recalling \eqref{eq:bar-Du-u_1-bdry}, we get $\abs{Du(z_0)}\leq C(\theta)\frac{r\psi^\frac12}{M}\leq C(\theta)\frac{r}{r+\sup_{E_r}\abs{u}}\leq C(\theta)$.
Since $z_0$ is the maximum point of $G$, we thus find
\eq{\label{esti:Thm3.2-bdry-maximum}
C(\theta)\log v(x)
\overset{\eqref{Basic properties of auxiliary functions}}{\leq}\varphi(u(x))\psi(x)\log v(x)
=G(x)
\leq G(z_0)
=\varphi(u(z_0))\psi(z_0)\log v(z_0)
\leq C(\theta),\quad\forall
x\in E_{\theta, r}.
}
Namely, in this case we have the required estimate.

\

\noindent{\bf Step 2.
We prove \eqref{eq:Du-C_1-C_2-C_3-i}, and also \eqref{eq:Du-C_1-C_2-C_3-ii} for the case $\abs{\cos\theta}<\frac{\sqrt{3}}2$.
}

By {\bf Step 1}, we just have to consider the case that the maximum point $z_0\in E_{r}$.
By maximality, at $z_0$,  
$(\log G)_{i}=0$,
and the matrix $\left(\log G\right)_{ij}\leq0$.
Namely, (write $\varphi_i=\varphi'(u)u_i$, $\varphi_{ij}$ is understood similarly)
\eq{\label{eq:logG_i}
0
=(\log G)_{i}
=\frac{\varphi_{i}}{\varphi}+\frac{\psi_{i}}{\psi}+\frac{v_{i}}{v\log v},
}
and 
\eq{\label{ineq:maximun-principle}
0\geq a^{ij}(\log G)_{ij}
=a^{ij}\left(\frac{\varphi_{ij}}{\varphi}-\frac{\varphi_{i}}{\varphi}\frac{\varphi_{j}}{\varphi}+\frac{\psi_{ij}}{\psi}-\frac{\psi_{i}}{\psi}\frac{\psi_{j}}{\psi}+\frac{v_{ij}}{v\log v}-\frac{(1+\log v)v_{i}v_{j}}{(v\log v)^{2}}\right).
}

Now after  a suitable rotation of coordinates (denote by $\{\tilde e_i\}$ the new coordinate basis), we assume that $\abs{Du(z_0)}=u_{n}(z_0)$, and $\{u_{ij}(z_0)\}_{1\leq i,j\leq n-1}$ is a diagonal matrix.
Note that after this rotation,
if $\cos\theta\in(0,\frac\pi2)$ we put $\left<Du,e_1\right>=\left<e_1,\tilde e_i\right>\left<Du,\tilde e_i\right>\eqqcolon b_iu_i$,
where $\{b_{i}\}_{i=1}^n$ are constants with $\sum_{i=1}^nb_i^2=1$;
if $\cos\theta\in[\frac\pi2,\pi)$ we put $-\left<Du,e_1\right>=\left<-e_1,\tilde e_i\right>\left<Du,\tilde e_i\right>\eqqcolon b_iu_i$,
where $\{b_{i}\}_{i=1}^n$ are constants with $\sum_{i=1}^nb_i^2=1$.
Therefore now we should write \eqref{eq:v} as 
\eq{\label{eq:v-z_0}
v
=W+\abs{\cos\theta}\sum_{k=1}^nu_kb_k
=W+\abs{\cos\theta}u_nb_n.
}

The following computations are carried out at $z_0$.
Note that $u_{i}\left(z_0\right)=0$ for $1\le i\le n-1$, thus 
\eq{\label{eq:a^ij-z_0}
a^{nn}=1;\quad
a^{ii}=1+u_{n}^{2}=W^2,\quad 1\leq i\leq n-1;\text{ and }
a^{ij}=0,\quad1\leq i\neq j\leq n.
}
Thus at $z_0$, \eqref{eq:MSE} reads
\eq{\label{eq:MSE-z_0}
-\sum_{i=1}^{n-1}u_{ii}
=\frac1{1+u_n^2}\left(u_{nn}-{HW^3}\right).
}
To proceed we use
direct computation to obtain
\eq{\label{eq:v_i}
v_{i}
=\frac{u_{n}u_{ni}}{W}+\abs{\cos\theta} u_{ki}b_{k}.
}
Back to \eqref{eq:logG_i}, we get
\eq{
\frac{u_{n}u_{ni}}{W}+\abs{\cos\theta}u_{ki}b_{k}
=-v\log v\left(\frac{\varphi_{i}}{\varphi}+\frac{\psi_{i}}{\psi}\right).
}
Put for simplicity $A=\frac{u_{n}}{W}+\abs{\cos\theta}b_{n}$.
For $i=n$, the above equality reads
\eq{
u_{nn}
=-\frac{\abs{\cos\theta}}A\sum_{k=1}^{n-1}u_{kn}b_{k}-\frac1A\left(\frac{\varphi_{n}}{\varphi}+\frac{\psi_{n}}{\psi}\right)v\log v.
}

For $i=1,\cdots,n-1$, note that $\varphi_i(z_0)=\varphi'(u)u_i(z_0)=0$, $u_{ik}(z_0)=0$ for $1\le i\neq k\le n-1$, it holds that 
\eq{\label{eq:u_ni}
u_{ni}
=-\frac{\abs{\cos\theta}}{A}b_{i}u_{ii}-\frac{\psi_{i}}{A\psi}v\log v,
}
which in turn gives
\eq{\label{eq:u_nn}
u_{nn}
&=\frac{\cos^2\theta}{A^{2}}\sum_{i=1}^{n-1}b_{i}^{2}u_{ii}-\frac{v\log v}{A}\left(\frac{\varphi_{n}}{\varphi}+\frac{\psi_{n}}{\psi}\right)+\frac{\abs{\cos\theta}v\log v\sum_{k=1}^{n-1}b_{k}\psi_{k}}{A^{2}\psi}.
}

\noindent{\bf Step 2.1. We bound $u_{nn}^2$ from below.
}

Note that $\abs{\cos\theta}\sum_{k=1}^{n-1}\abs{b_k}\leq n$.
For $G(z_0)$ sufficiently large depending on $\theta$, we may assume that
\eq{\label{defn:A}
\frac{1-\abs{\cos\theta}}2
<A
=\frac{u_n}W+\abs{\cos\theta}b_n
<1+\abs{\cos\theta},
}
and
%for some $\varepsilon_{3}=\frac18$
we may also assume that
\eq{\label{ineq:ep_3-varphi}
%\varepsilon_{3}
\frac18{\frac{\varphi_{n}}{\varphi}}
>C(n,\theta)\frac{\abs{D\psi}}{\psi}
\geq\frac{\abs{\psi_n}}{\psi}+\frac{\abs{\cos\theta}\sum_{k=1}^{n-1}\abs{b_k\psi_k}}{A\psi}.
}
Otherwise,
as $\varphi_n=\varphi'(u)\abs{Du(z_0)}$ we must have $\psi^{\frac12}\abs{Du}_{\mid_{z_0}}<\frac{C(n,\theta)M}{r}$, and hence by \eqref{Basic properties of auxiliary functions}
\eq{\label{ineq:sup-v-ep_3}
\frac{(1-|\cos\theta|)^2}{4}\sup_{\overline{E_{\theta,r}}}\log v
\leq\sup_{\overline{ E_{\theta,r}}}G
\leq\sup_{\overline{E_r}}\left(\varphi(u)\psi\log v\right)
\leq 3\psi^{\frac12}\abs{Du}_{\mid_{z_0}}<\frac{C(n,\theta)M}{r},
}
which gives the required estimates in ($i$) and 
($ii$). 

With \eqref{ineq:ep_3-varphi}, we could in turn go back to \eqref{eq:u_nn} and use Cauchy inequality to get
%----
\iffalse
\eq{\label{ineq:u_nn}
\frac{\cos^2\theta}{A^{2}}\sum_{i=1}^{n-1}b_{i}^{2}u_{ii}-(1-\varepsilon_{3})\frac{v\log v}{A}{\frac{\varphi_{n}}{\varphi}}
\geq u_{nn}
\geq\frac{\cos^2\theta}{A^{2}}\sum_{i=1}^{n-1}b_{i}^{2}u_{ii}-(1+\varepsilon_{3})\frac{v\log v}{A}{\frac{\varphi_{n}}{\varphi}}
}
\fi
%----
\eq{\label{eq:lower bound og unn2}
u_{nn}^2
%&\geq(1-2\varepsilon_{3})\left(\frac{v\log v}{A}\frac{\varphi_{n}}{\varphi}\right)^{2}-C(\varepsilon_3)\left(\frac{\cos^2\theta}{A^{2}}\sum_{i=1}^{n-1}b_{i}^{2}u_{ii}\right)^{2}\\
&\geq%(1-2\varepsilon_{3})
\frac34\frac{1}{A^2}u_{n}^{2}\left(v\log v\right)^{2}\left(\frac{\varphi'}{\varphi}\right)^{2}-C(\theta)\sum_{i=1}^{n-1}u_{ii}^{2}.
}
%where $C>0$ in the first inequality is an absolute number depending only on $\varepsilon_3<\frac14$.

\noindent{\bf Step 2.2. We estimate the last two terms appearing in \eqref{ineq:maximun-principle}.}

Differentiating equation \eqref{eq:MSE} gives $a^{ij}u_{ijk}+\frac{\p a^{ij}}{\p p_{l}}u_{lk}u_{ij}={(HW^3)_k}$,
where (recall that $\abs{Du(z_0)}=u_n(z_0)$)
\eq{
\frac{\p a^{ij}}{\p p_l}(Du)\mid_{z_0}
=2u_n\de_{ln}\de_{ij}-\de_{il}\de_{jn}u_n-\de_{jl}\de_{in}u_n\mid_{z_0}.
}
Hence at $z_0$ we have (recall that $\left(u_{ij}(z_0)\right)_{1\leq i,j\leq n-1}$ is diagonal)
\eq{
%-----1
\frac{\p a^{ij}}{\p p_{l}}u_{lk}u_{ij}
=&\left(2u_n\de_{ln}\de_{ij}-\de_{il}\de_{jn}u_n-\de_{jl}\de_{in}u_n\right)u_{lk}u_{ij}\\
%-----2
=&2u_nu_{nk}\sum_{i=1}^{n-1}u_{ii}+2u_nu_{nk}u_{nn}-2u_n\sum_{i=1}^{n-1}u_{in}u_{ik}-2u_nu_{nn}u_{nk}\\
%-----3
\overset{\eqref{eq:MSE-z_0}}{=}&\frac{-2u_nu_{nn}u_{nk}+2u_nu_{nk}HW^3}{1+u_n^2}-2u_n\sum_{i=1}^{n-1}u_{in}u_{ik},
}
which implies
\eq{\label{eq:a^ij-u_ijk}
a^{ij}u_{ijk}
=\frac{2u_{n}u_{nn}u_{nk}}{1+u_{n}^{2}}+2u_{n}\sum_{i=1}^{n-1}u_{in}u_{ik}+{H_kW^3+HWu_nu_{nk}}.
}
In particular, for $k=n$ we have
\eq{\label{eq:a^ij-u_ijn}
a^{ij}u_{ijn}
=\frac{2u_{n}u_{nn}^{2}}{1+u_{n}^{2}}+2u_{n}\sum_{i=1}^{n-1}u_{ni}^{2}+{H_nW^3+HWu_nu_{nn}}.
}

Differentiating $v=W+\abs{\cos\theta}\sum_{k=1}^nu_kb_k$ twice, we obtain
\eq{
a^{ij}v_{ij}
=\underbrace{\frac{u_{n}}{W} a^{ij}u_{ijn}+\abs{\cos\theta} a^{ij}u_{ijk}b_{k}}_{\coloneqq I_{11}}
+\frac{1}{W^{3}}a^{ij}u_{ni}u_{nj}+
\frac1W\sum_{k=1}^{n-1}a^{ij}u_{ki}u_{kj}.
}
Using \eqref{eq:a^ij-u_ijk} and \eqref{eq:a^ij-u_ijn}, we could further write
\eq{
I_{11}
=&\frac{2u_{n}u_{nn}}{W^2}\left(\frac{u_{n}}{W}u_{nn}+\abs{\cos\theta}\sum_{k=1}^{n}u_{nk}b_{k}\right)
+2u_{n}\sum_{i=1}^{n-1}\left(\frac{u_n}Wu_{ni}+\abs{\cos\theta}\sum_{k=1}^{n}u_{ik}b_{k}\right)u_{ni}\\
&+\frac{u_n}W\left({\underbrace{H_nW^3+HWu_nu_{nn}}_{\coloneqq \mcT_n}}\right)+\abs{\cos\theta}\sum_{k=1}^nb_k\left({\underbrace{H_kW^3+HWu_nu_{nk}}_{\coloneqq\msH_k}}\right),
}
and in turn
\eq{\label{eq:formula 1}
%-----1
a^{ij}v_{ij}
%=&\frac{2u_{n}u_{nn}}{W^{2}}\left(\frac{u_{n}}{W}u_{nn}+\abs{\cos\theta}\sum_{k=1}^nu_{nk}b_{k}\right)+2u_{n}\sum_{i=1}^{n-1}\left(\frac{u_{n}}{W}u_{ni}+\abs{\cos\theta}\sum_{k=1}^{n}u_{ik}b_{k}\right)u_{ni}\\
%------2
%&+\frac{1}{W^{3}}a^{ij}u_{ni}u_{nj}+\frac{1}{W}\sum_{k=1}^{n-1}a^{ij}u_{ki}u_{kj}+{\frac{u_n}W\mcT_n+\abs{\cos\theta}\sum_{k=1}^nb_k\msH_k}\\
%------3
=&\frac{2u_{n}^{2}u_{nn}^{2}}{W^{3}}+\frac{2\abs{\cos\theta}u_{n}u_{nn}^{2}b_n}{W^{2}}+\frac{2u_{n}^{2}\sum_{i=1}^{n-1}u_{ni}^{2}}{W}+\frac{2\abs{\cos\theta}u_{n}u_{nn}}{W^{2}}\sum_{k=1}^{n-1}u_{nk}b_{k}\\
%------4
&+2u_{n}\abs{\cos\theta}\sum_{i
=1}^{n-1}u_{in}^{2}b_{n}+2u_{n}\abs{\cos\theta}\sum_{i=1}^{n-1}u_{ii}b_{i}u_{ni}\\
%------5
&+\frac{1}{W^{3}}\left(u_{nn}^{2}+W^{2}\sum_{i=1}^{n-1}u_{ni}^{2}\right)+\sum_{k=1}^{n-1}Wu_{kk}^{2}+\frac{1}{W}\sum_{k=1}^{n-1}u_{kn}^{2}
{+\frac{u_n}W\mcT_n+\abs{\cos\theta}\sum_{k=1}^nb_k\msH_k}.
}

On the other hand, we use
\eqref{eq:v_i} and \eqref{eq:a^ij-z_0} to compute
\eq{\label{eq:aijvivj upperbpund}
a^{ij}v_{i}v_{j}
=&a^{ij}\left(\frac{u_{n}u_{ni}}{W}+\abs{\cos\theta} u_{ki}b_{k}\right)\left(\frac{u_{n}u_{nj}}{W}+\abs{\cos\theta} u_{lj}b_{l}\right)\\
%&\leq u_{nn}^{2}\left(\frac{u_{n}^{2}}{W^{2}}+\abs{\cos\theta}^{2}b_{n}^{2}+\frac{2\abs{\cos\theta}b_{n}u_{n}}{W}\right)+\abs{\cos\theta}^2W^2\sum_{i=1}^{n-1}u_{ii}^{2}+\tilde C(n)W^2\sum_{i=1}^{n-1}u_{ni}^{2}\\
%&\leq u_{nn}^{2}A^2+\abs{\cos\theta}^2W^2\sum_{i=1}^{n-1}u_{ii}^{2}+\tilde C(n)W^2\sum_{i=1}^{n-1}u_{ni}^{2},
=&\frac{u_{n}^{2}}{W^{2}}a^{ij}u_{ni}u_{nj}+2\abs{\cos\theta} u_{ki}b_{k}a^{ii}\frac{u_{n}u_{ni}}{W}+a^{ij}\abs{\cos\theta}^2u_{ki}b_{k}u_{lj}b_{l}\\
=&\frac{u_{n}^{2}}{W^{2}}(u_{nn}^{2}+W^{2}\sum_{i=1}^{n-1}u_{ni}^{2})+\frac{2\abs{\cos\theta}u_{nn}^{2}b_{n}u_{n}}{W}+\frac{2\abs{\cos\theta}\sum_{k=1}^{n-1}u_{kn}b_{k}u_{n}u_{nn}}{W}\\
&+\sum_{i=1}^{n-1}2\abs{\cos\theta} Wu_{ni}b_{n}u_{n}u_{ni}+\sum_{i=1}^{n-1}2\abs{\cos\theta}Wu_{ii}b_{i}u_{n}u_{ni}\\
&+\abs{\cos\theta}^{2}{\bigg(}b_{n}^{2}u_{nn}^{2}+2b_{n}u_{nn}\sum_{l=1}^{n-1}b_{l}u_{nl}+(\sum_{k=1}^{n-1}b_{k}u_{kn})^{2}\\
 & +W^{2}\sum_{i=1}^{n-1}u_{ni}^{2}b_{n}^{2}+W^{2}\sum_{i=1}^{n-1}u_{ii}^{2}b_{i}^{2}+2W^{2}\sum_{i=1}^{n-1}u_{ni}u_{ii}b_{n}b_{i}{\bigg)}.
}
By \eqref{eq:formula 1} and \eqref{eq:aijvivj upperbpund}, we obtain
\eq{\label{eq:J expression}
a^{ij}v_{ij}-\frac{(1+\log v)}{(v\log v)}a^{ij}v_{i}v_{j}
\coloneqq\underbrace{\msJ_1+\msJ_2+\msJ_3+\msJ_4}_{\coloneqq\msJ}{+%\frac{(1+\log v)}{(v\log v)}
\left(\frac{u_n}W\mcT_n+\abs{\cos\theta}\sum_{k=1}^nb_k\msH_k\right)},
}
where $\msJ_i$ ($i=1,..,4$) are defined by
\eq{\label{defn:J_1}
\msJ_1
\coloneqq u_{nn}^{2}\left(\frac{2u_{n}^{2}}{W^{3}}+\frac{2\abs{\cos\theta}u_{n}b_{n}}{W^{2}}+\frac{{1}}{W^{3}}-\frac{(1+\log v)}{v\log v}\left(\frac{u_{n}^{2}}{W^{2}}+\abs{\cos\theta}^{2}b_{n}^{2}+\frac{2\abs{\cos\theta}u_nb_{n}}{W}\right)\right),
}
\eq{\label{defn:J_2}
\msJ_2
&\coloneqq\sum_{i=1}^{n-1}u_{ni}^{2}\left(\frac{2u_{n}^{2}}{W}+2u_{n}\abs{\cos\theta}b_{n}+\frac{2}{W}-\frac{(1+\log v)}{(v\log v)}\left(u_{n}^{2}+\abs{\cos\theta}^{2}W^{2}b_{n}^{2}+2\abs{\cos\theta} Wu_nb_{n}\right)\right)%\\
%&=\sum_{i=1}^{n-1}u_{ni}^{2}\left(\frac{2u_{n}^{2}}{W}+2u_{n}\abs{\cos\theta}b_{n}+\frac{2}{W}-\frac{(1+\log v)}{\log v}\left(u_{n}+\abs{\cos\theta} b_{n}W\right)^2\right),
}
\eq{\label{defn:J_3}
\msJ_3
\coloneqq\sum_{i=1}^{n-1}u_{ii}^{2}\left(W-\frac{(1+\log v)}{(v\log v)}\abs{\cos\theta}^2b_{i}^{2}W^{2}\right),
}
as well as
\eq{\label{defn:J_4}
%------1
\msJ_4
&\coloneqq\sum_{k=1}^{n-1}u_{nk}b_{k}u_{n}u_{nn}\left(\underbrace{\frac{2\abs{\cos\theta}}{W^{2}}-\frac{(1+\log v)}{v\log v}\frac{2\abs{\cos\theta}}{W}-\frac{(1+\log v)}{(v\log v)}\frac{2\abs{\cos\theta}^2b_{n}}{u_{n}}}_{\coloneqq \msJ_{4,1}}\right)\\
%------2
&+\sum_{i=1}^{n-1}u_{ii}b_{i}u_{ni}\left(\underbrace{2\abs{\cos\theta}u_{n}-\frac{(1+\log v)}{(v\log v)}2\abs{\cos\theta}u_{n}W-\frac{(1+\log v)}{(v\log v)}2\abs{\cos\theta}^2b_{n}W^{2}}_{\coloneqq\msJ_{4,2}}\right)\\
%------3
&-\frac{(1+\log v)}{(v\log v)}\left(\sum_{k=1}^{n-1}b_{k}u_{kn}\right)^{2}\abs{\cos\theta}^2\\
%------4
=&\sum_{k=1}^{n-1}u_{nk}b_{k}u_{n}u_{nn}\msJ_{4,1}+\sum_{i=1}^{n-1}u_{ii}b_{i}u_{ni}\msJ_{4,2}-\frac{(1+\log v)}{(v\log v)}\left(\sum_{k=1}^{n-1}b_{k}u_{kn}\right)^{2}\abs{\cos\theta}^{2}.
}

To have a closer look at the term $\msJ_4$, we analyze the coefficients $\msJ_{4,1}$ and $\msJ_{4,2}$ in terms of sufficiently large $\abs{Du(z_0)}=u_n(z_0)$ (of course sufficiently large $v$ and $W$ at $z_0$) as follows:
\eq{\label{eq:J_4,1}
%-----1
\msJ_{4,1}
%=&\frac{2\abs{\cos\theta}}{W^{2}}-\frac{(1+\log v)}{v\log v}\frac{2\abs{\cos\theta}}{W}-\frac{(1+\log v)}{(v\log v)}\frac{2\abs{\cos\theta}^{2}b_{n}}{u_{n}}\\
%-----2
%=&\frac{2\abs{\cos\theta}}{W^{2}}-\frac{1}{v}\frac{2\abs{\cos\theta}}{W}-\frac{1}{v}\frac{2\abs{\cos\theta}^{2}b_{n}}{u_{n}}+O\left(\frac{1}{u_{n}v\log v}\right)\\
%----3
%=&\frac{2\abs{\cos\theta}vu_{n}-2\abs{\cos\theta}Wu_{n}-2\abs{\cos\theta}^{2}W^{2}b_{n}}{W^{2}vu_{n}}+O\left(\frac{1}{u_{n}v\log v}\right)\\
%----4
\overset{\eqref{eq:v-z_0}}{=}&\frac{2\abs{\cos\theta}\left(W+\abs{\cos\theta}u_nb_{n}\right)u_{n}-2\abs{\cos\theta}Wu_{n}-2\abs{\cos\theta}^{2}W^{2}b_{n}}{W^{2}vu_{n}}+O\left(\frac{1}{u_{n}v\log v}\right)\\
%----5
=&\frac{-2\abs{\cos\theta}^{2}b_{n}}{W^{2}vu_{n}}+O\left(\frac{1}{u_{n}v\log v}\right),
}
where we have used the fact that $W^2=1+\abs{Du}^2=1+u_n^2$ at $z_0$.
Similarly, we have
\eq{\label{eq:J_4,2}
%-----1
\msJ_{4,2}
%&=\left(2\abs{\cos\theta}u_{n}-\frac{(1+\log v)}{(v\log v)}2\abs{\cos\theta}u_{n}W-\frac{(1+\log v)}{(v\log v)}2\abs{\cos\theta}^2W^{2}b_n\right)\\
%-----2
%&=2\abs{\cos\theta}\frac{u_{n}v\log v-(1+\log v)u_{n}W-(1+\log v)\abs{\cos\theta}W^{2}b_n}{v\log v}\\
%-----3
%&=2\abs{\cos\theta}\frac{u_{n}v-u_{n}W-\abs{\cos\theta}W^{2}b_n}{v}+O\left(\frac{W}{\log v}\right)\\
%-----4
&\overset{\eqref{eq:v-z_0}}{=}2\abs{\cos\theta}\frac{u_{n}\left(W+\abs{\cos\theta}u_{n}b_n\right)-u_{n}W-\abs{\cos\theta}W^{2}b_n}{v}+O\left(\frac{W}{\log v}\right)\\
%-----5
&=-\frac{2\abs{\cos\theta}^{2}b_{n}}{v}+O\left(\frac{W}{\log v}\right).
}
Substituting \eqref{eq:J_4,1} and \eqref{eq:J_4,2} back into \eqref{defn:J_4}, we obtain
\eq{\label{eq:J_4}
\msJ_{4}
=&\sum_{k=1}^{n-1}u_{nk}b_{k}u_{n}u_{nn}\left(\frac{-2\abs{\cos\theta}^{2}b_n}{W^{2}vu_{n}}+O\left(\frac{1}{u_{n}v\log v}\right)\right)\\
&+\sum_{i=1}^{n-1}u_{ii}b_{i}u_{ni}\left(-\frac{2\abs{\cos\theta}^{2}b_{n}}{v}+O\left(\frac{W}{\log v}\right)\right)-\frac{(1+\log v)}{(v\log v)}\left(\sum_{k=1}^{n-1}b_{k}u_{kn}\right)^{2}\abs{\cos\theta}^{2},
}
and by Cauchy inequality we thus find
\eq{\label{ineq:J_4}
\msJ_4
\geq&-{C}\frac{\left(\sum_{k=1}^{n-1}u_{nk}b_{k}\right)^{2}}{v\log v}-{C}\frac{u_{nn}^{2}}{v\log v}
-{C}\frac{\sum_{i=1}^{n-1}u_{ii}^{2}W}{\log v}\\
&-{C}\frac{\sum_{i=1}^{n-1}u_{ni}^{2}W}{\log v}
-\frac{(1+\log v)}{(v\log v)}\left(\sum_{k=1}^{n-1}b_{k}u_{kn}\right)^{2}\abs{\cos\theta}^{2},
}
where we have used the fact that in the first bracket in \eqref{eq:J_4}, $O\left(\frac1{u_nv\log v}\right)$ is the dominating term when $\abs{Du(z_0)}$ is sufficiently large and a similar fact for
%and a similar argument applies for 
the second bracket in \eqref{eq:J_4}.
Here $ C$ may vary from line to line, but only depends $n,\theta$.
%are absolute constants resulting from the expressions $O\left(\frac{1}{u_{n}v\log v}\right)$ and $O\left(\frac{W}{\log v}\right)$ back in \eqref{eq:J_4,1} and \eqref{eq:J_4,2} respectively, and note that these constants are bounded from above since trivially $|b|^2=1$, and $\abs{\cos\theta}\leq1$.

Now we go back to \eqref{eq:J expression} and use \eqref{defn:J_1}, \eqref{defn:J_2}, \eqref{defn:J_3} as well as \eqref{ineq:J_4} to obtain, for sufficiently large $W$, 
\eq{\label{eq:J}
%-----1
\msJ
\geq&u_{nn}^{2}\left(\underbrace{\frac{2u_{n}^{2}}{W^{3}}+\frac{2\abs{\cos\theta}u_{n}b_{n}}{W^{2}}+\frac{1}{W^{3}}-\frac{(1+\log v)}{v\log v}\left(\frac{u_{n}}{W}+\abs{\cos\theta}b_{n}\right)^{2}-{C}\frac{1}{v\log v}}_{\coloneqq \mcC_{nn}}\right)\\
%-----2
&+\sum_{i=1}^{n-1}u_{ni}^{2}\left(\underbrace{\frac{2u_{n}^{2}}{W}+2\abs{\cos\theta}u_nb_{n}+\frac{2}{W}-\frac{(1+\log v)}{v\log v}\left(u_{n}+\abs{\cos\theta}Wb_n\right)^{2}-{C}\frac{W}{\log v}}_{\coloneqq\mcC_{ni}}\right)\\
%----3
&+\sum_{i=1}^{n-1}u_{ii}^{2}\left(\underbrace{W-\frac{(1+\log v)}{(v\log v)}\abs{\cos\theta}^{2}W^{2}b_i^2-C(n)\frac{W}{\log v}}_{\coloneqq\mcC_{ii}}\right).
}

\

\noindent{\bf Step 2.3. We further estimate the coefficients appearing in \eqref{eq:J}.}

In terms of sufficiently large $W$, we have (recall \eqref{eq:v-z_0}, \eqref{defn:A})
\eq{\label{eq:AW-v}
W+\abs{\cos\theta}u_nb_n
=v
\approx u_n(1+\abs{\cos\theta}b_n)
\approx AW
=u_n+\abs{\cos\theta}Wb_n,
}
by virtue of which we obtain the refined estimates on the coefficients of $u_{nn}^2, u_{ni}^2,$ and $u_{ii}^2$ in \eqref{eq:J} as follows:
\eq{\label{eq:coefficients of unn^2}
\mcC_{nn}
%=&\frac{2u_{n}^{2}}{W^{3}}+\frac{2\abs{\cos\theta}u_{n}b_{n}}{W^{2}}+\frac{1}{W^{3}}-\frac{(1+\log v)}{v\log v}\left(\frac{u_{n}}{W}+\abs{\cos\theta}b_{n}\right)^{2}-{C}\frac{1}{v\log v}\\
=&\frac{2u_nAW}{W^3}+\frac1{W^3}-\frac{A^2W^2}{vW^2}-C\frac1{v\log v}\\
\geq&\underbrace{\frac{(u_{n}+\abs{\cos\theta}Wb_n)u_{n}}{W^{3}}-C\frac{1}{v\log v}}_{\coloneqq\mathbf{C}_n}.
}
Similarly,
the coefficients of $u_{ni}^{2}$ are estimated by
\eq{\label{eq:coefficients of uni^2}
%-----1
\mcC_{ni}
%=&\frac{2u_{n}^{2}}{W}+2\abs{\cos\theta}u_nb_{n}+\frac{2}{W}-\frac{(1+\log v)}{(v\log v)}\left(u_{n}+\abs{\cos\theta}Wb_n\right)^{2}-{C}\frac{W}{\log v}\\
=&\frac{2u_n}W(AW)+\frac2W-\frac1vA^2W^2-C\frac{W}{\log v}
=AW\left(\frac{2u_n}W-\frac{AW}v\right)-C\frac{W}{\log v}\\
\geq&AW\frac{u_n}W-C\frac{W}{\log v}
=\left(u_{n}+\abs{\cos\theta}Wb_{n}\right)\frac{u_{n}}{W}-C\frac{W}{\log v},
}
which is positive for sufficiently large $W$.
%----
\iffalse
%-----2
\geq&\left(u_{n}+\beta b_{n}W\right)\left(\frac{2u_{n}}{W}-\frac{u_{n}+\beta b_{n}W}{W+b_{n}\beta u_{n}}\right)-C\frac{W}{\log v}\\
%-----3
\ge & (u_{n}+\beta b_{n}W)(\frac{2u_{n}(W+b_{n}\beta u_{n})-W(u_{n}+\beta b_{n}W)}{W(W+b_{n}\beta u_{n})})-C\frac{W}{\log v}\\
%------4
\ge&(u_{n}+\beta b_{n}W)(\frac{u_{n}W+b_{n}\beta u_{n}^{2}-b_{n}\beta}{W(W+b_{n}\beta u_{n})})-C\frac{W}{\log v}\\
%------5
\ge&(u_{n}+\beta b_{n}W)\frac{u_{n}(W+b_{n}\beta u_{n})}{W(W+b_{n}\beta u_{n})})-C\frac{W}{\log v}\\
%------6
\ge&(u_{n}+\beta b_{n}W)\frac{u_{n}}{W}-C\frac{W}{\log v},
}
\fi
%-----
And the coefficients of $u_{ii}^{2}$ ($i=1,\ldots,n-1$) are estimated by
\eq{\label{eq:coefficients of uii^2}
\mcC_{ii}
%=&W-\frac{(1+\log v)}{(v\log v)}\abs{\cos\theta}^{2}W^{2}b_i^2-C(n)\frac{W}{\log v}
%{=}\frac{Wv-\abs{\cos\theta}^{2}W^{2}b_i^{2}}{v}-C\frac{W}{\log v}\\
\overset{\eqref{eq:v-z_0}}{=}&\underbrace{\frac{W(W+\abs{\cos\theta}u_nb_n)-\abs{\cos\theta}^{2}W^{2}b_i^{2}}{v}-C\frac{W}{\log v}}_{\coloneqq\mathbf{C}_i}.
}

\noindent{\bf Step 2.4. We finish this step by using the interior maximality \eqref{ineq:maximun-principle}.}

First note that by \eqref{eq:MSE}
%\eqref{eq:a^ij-z_0} and \eqref{eq:MSE-z_0},
we have
\eq{\label{eq:a^ij-varphi^ij}
a^{ij}\frac{\varphi_{ij}}\varphi
=\frac{\varphi'}{\varphi}{HW^3},
%=\frac{H}{2M\varphi}.
}
Then we apply Lemma \ref{Lem:cut-off}, in conjunction with \eqref{eq:J expression}, \eqref{eq:J}, \eqref{eq:coefficients of unn^2}, \eqref{eq:coefficients of uni^2}, \eqref{eq:coefficients of uii^2}, to obtain that
\eq{
%-----1
0
\overset{\eqref{ineq:maximun-principle}}{\ge}&a^{ij}\left(\frac{\varphi_{ij}}{\varphi}-\frac{\varphi_{i}}{\varphi}\frac{\varphi_{j}}{\varphi}+\frac{\psi_{ij}}{\psi}-\frac{\psi_{i}}{\psi}\frac{\psi_{j}}{\psi}+\frac{v_{ij}}{v\log v}-\frac{(1+\log v)v_{i}v_{j}}{v^{2}\log v^{2}}\right)\\
%-----2
\ge&{\frac{\varphi'}{\varphi}HW^3-u^2_n\left(\frac{\varphi'}{\varphi}\right)^{2}}-C\frac{1}{\psi r^{2}}-CW^{2}\frac{1}{\psi r^{2}}\\
%----3
+&\frac{\msJ_1+\msJ_2+\msJ_3+\msJ_4}{v\log v}+\frac{\frac{u_n}{W}{\mcT_n}+\abs{\cos\theta}\sum_{k=1}^{n}b_k{\msH_k}}{v\log v}\\
%----4
\geq&\left(\frac{\left(u_{n}+\abs{\cos\theta}Wb_n\right)u_{n}}{W^{3}}-C\frac{1}{v\log v}\right)\frac{u_{nn}^{2}}{v\log v}\label{eq:final expression}%\\
%----5&
+\sum_{i=1}^{n-1}\mathbf{C}_i
%\left(\frac{W(W+\abs{\cos\theta}u_nb_n)-\abs{\cos\theta}^{2}W^{2}b_i^{2}}{v}-C\frac{W}{\log v}\right)
\frac{u_{ii}^{2}}{v\log v}\\
%----6
&{+\frac{\varphi'}{\varphi}HW^3-u^2_n\left(\frac{\varphi'}{\varphi}\right)^{2}}-C\frac{1}{\psi r^{2}}-CW^{2}\frac{1}{\psi r^{2}}+\frac{\frac{u_n}{W}{\mcT_n}+\abs{\cos\theta}\sum_{k=1}^{n}b_k{\msH_k}}{v\log v}.
}
By \eqref{eq:logG_i} and \eqref{eq:v_i} we have
\eq{
\frac{\varphi'}{\varphi}u_n+\frac{\psi_n}{\psi}
=-\frac{v_n}{v\log v}
=-\frac{\frac{u_n}Wu_{nn}+\abs{\cos\theta}u_{kn}b_k}{v\log v},
}
and hence
\eq{
&\frac{\varphi'}{\varphi}HW^3+\frac{\frac{u_n}{W}{\mcT_n}+\abs{\cos\theta}\sum_{k=1}^{n}b_k{\msH_k}}{v\log v}\\
=&\frac{\varphi'}{\varphi}HW^3
+\frac{u_nW^2H_n+\abs{\cos\theta}\sum_{k=1}^nb_kH_kW^3}{v\log v}+HWu_n\frac{\frac{u_n}Wu_{nn}+\abs{\cos\theta}\sum_{k=1}^nb_ku_{nk}}{v\log v}\\
=&\frac{\varphi'}{\varphi}HW^3
+\frac{u_nW^2H_n+\abs{\cos\theta}\sum_{k=1}^nb_kH_kW^3}{v\log v}-HWu_n\left(\frac{\varphi'}\varphi u_n+\frac{\psi_n}\psi\right)\\
=&\frac{\varphi'}{\varphi}HW-HWu_n\frac{\psi_n}{\psi}+\frac{u_nW^2H_n+\abs{\cos\theta}\sum_{k=1}^nb_kH_kW^3}{v\log v}\\
\geq&-C_H\frac{\varphi'}\varphi W-C_HW^2\frac1{\psi^\frac12r}-C_H\frac{W^2}{\log v},
}
where we have adopted the symbol $C_H$ to denote the constants depending only on $n,\theta,\mathbf{C}_H$, and  $C_H\equiv0$ if $H\equiv0$ (i.e., for minimal surface equation).
Note that by Cauchy-Schwarz inequality, we have $-C_H\frac{\varphi'}{\varphi}W\geq-C_H\left(\frac{\varphi'}{\varphi}\right)^2-C_HW^2$
and $-C_HW^2\frac1{\psi^\frac12r}\geq-C_HW^2-C_HW^2\frac1{\psi r^2}$, which give
\eq{\label{ineq:W^3-mcT_n-msH_k}
\frac{\varphi'}{\varphi}HW^3+\frac{\frac{u_n}{W}{\mcT_n}+\abs{\cos\theta}\sum_{k=1}^{n}b_k{\msH_k}}{v\log v}
\geq-C_H\left(\frac{\varphi'}\varphi\right)^2-C_HW^2-C_HW^2\frac1{\psi r^2}.
%-C\frac{W^2}{\log v}.
}

Finally we use \eqref{eq:lower bound og unn2}
%(with $\varepsilon_3=\frac18$)
to further estimate \eqref{eq:final expression} and get
\eq{\label{ineq:final-esti-step2}
0
\geq&\frac34\frac{A}{2A^2W}u_n^2v\log v\left(\frac{\varphi'}\varphi\right)^2+\sum_{i=1}^{n-1}\left(\mathbf{C}_i-\frac{C(\theta)}{Wv\log v}\right)u_{ii}^2\\
&{\underbrace{-C\left(\frac{\varphi'}\varphi\right)^2-u^2_n\left(\frac{\varphi'}{\varphi}\right)^{2}}_{\coloneqq I_{12}}-C_HW^2-CW^2\frac1{\psi r^2}}-C\frac{1}{\psi r^{2}}.
%-CW^{2}\frac{1}{\psi r^{2}},
}
Until now, we have not used the angle assumption on $\theta$.
For the rest of {\bf Step 2}, we restrict ourselves to $\abs{\cos\theta}<\frac{\sqrt{3}}2$.

Since $\abs{\cos\theta}<\frac{\sqrt{3}}2$,
%and $\abs{DH}\leq\mathbf{C}_H$,
we have for sufficiently large $W$ and each $1\le i\le n-1$ the following estimate: 
\eq{\label{ineq:C_i}
%----1
\mathbf{C}_i
&\geq\frac{W(W+\abs{\cos\theta}u_nb_n)-\abs{\cos\theta}^2W^2\left(1-b_{n}^{2}\right)}{v}-C\frac{W}{\log v}\\
%-----2
&=\frac{W^{2}(1-\abs{\cos\theta}^{2})+\abs{\cos\theta} Wu_nb_n+\abs{\cos\theta}^2W^{2}b_n^2}{v}-C\frac{W}{\log v}\\
%-----3
&=\frac{\left(\abs{\cos\theta}Wb_n+\frac{u_{n}}{2}\right)^{2}-\frac{u_{n}^{2}}{4}+W^{2}(1-\abs{\cos\theta}^{2})}{v}-C\frac{W}{\log v}\\
%-----4
&\geq\frac{W^{2}\left(\frac{3}{4}-\abs{\cos\theta}^{2}\right)}{2v}-C\frac{W}{\log v}.
}
Note that for sufficiently large $W$:
\begin{itemize}
    \item We have $\left(\mathbf{C}_i-\frac{C(\theta)}{Wv\log v}\right)>0$ thanks to \eqref{ineq:C_i};
    \item We have \eqref{eq:AW-v};
    \item $I_{12}$ can be absorbed into $\frac34\frac{A}{2A^2W}\left(\frac{\varphi'}\varphi\right)^2u_n^2v\log v$.
\end{itemize}
Further,
%recall that $\abs{H}+\abs{DH}\leq\mathbf{C}_H$, and
by definition of $\varphi$ we have
\eq{
\varphi(u)\in[\frac12,\frac32],
\quad
\varphi'(u)=\frac1{2M},
}
hence by \eqref{ineq:final-esti-step2}, we arrive at
\eq{
0\geq\frac1{25M^2}u_n^2\log v-C_HW^2
%-C\frac{1}{\psi r^{2}}
-CW^{2}\frac{1}{\psi r^{2}}.
}
Rearranging we obtain at $z_0$
\eq{
\psi\log v
\leq C_HM^2+C\left(\frac{M}{r}\right)^{2}
=C_1+C_2\frac{M^2}{r^2}.
}
In particular, if $H\equiv0$, then the above estimate reads (recall that $C_H$ are constants such that $C_H\equiv0$ if $H\equiv0$, see the discussion above \eqref{ineq:W^3-mcT_n-msH_k})
\eq{\label{esti:psi-logv-cos-theta<3/4}
\psi\log v
\leq C\left(\frac{M}{r}\right)^{2}=C\left(\frac{\sup_{E_r}\abs{u}+r}r\right)^2,
}
for some $C>0$ depends only on $n,\theta$.

Since $z_0$ is the maximum point of $G$, in virtue of \eqref{Basic properties of auxiliary functions}, these estimates, in conjunction with \eqref{esti:Thm3.2-bdry-maximum}, imply the required estimates \eqref{eq:Du-C_1-C_2-C_3-i}, and also \eqref{eq:Du-C_1-C_2-C_3-ii} for the case $\abs{\cos\theta}<\frac{\sqrt{3}}2$.

\

\noindent{\bf Step 3.
We show \eqref{eq:Du-C_1-C_2-C_3-ii} under the assumptions (on $n,\theta$) of ($ii$).
}

This step amounts to be an refinement of {\bf Step 2} and the main efforts are to deal with $$\left(\mathbf{C}_i-\frac{C(\theta)}{Wv\log v}\right)u_{ii}^2.$$ To do so,  by relabeling $1,\ldots,n-1$, we  may assume Wlog that $b_{1}^{2}\ge b_{2}^{2}\ge\cdots b_{n-1}^{2}$, where $\{b_i\}$ are coefficients appearing in \eqref{eq:v-z_0} and  satisfy $\sum_{i=1}^{n-1}b_i^2=1-b_n^2$.

We consider in the following  only the case $n\geq3$. The case $n=2$ is rather simple and we leave the details to the intrested reader.
Our analysis is based on the following observation:

\

{\bf Claim.}   {\it Only the coefficient of $u_{11}^{2}$ in \eqref{ineq:final-esti-step2}
could be negative, the other coefficients of $u_{ii}^2$ for $i\in\{2,\ldots,n-1\}$ must be positive, in terms of sufficiently large $W$.
In particular, if this is the case, then
for all $i\in\{2,\ldots,n-1\}$, the coefficients of $u_{ii}^2$ in \eqref{ineq:final-esti-step2} are positive with order at least $O\left(\frac{W^2}v\right)$.
}

\

To see this, we first observe that since $b_1^2\geq\ldots \geq b_{n-1}^2$, the quantities $\mathbf{C}_i$ defined in \eqref{eq:coefficients of uii^2} satisfy $\mathbf{C}_1\leq\ldots\leq\mathbf{C}_{n-1}$.
A direct computation gives: for any $i\in\{2,\ldots,n-1\}$,
\eq{
%-----1
&\left(\mathbf{C}_1-\frac{C(\theta)}{Wv\log v}\right)
+\left(\mathbf{C}_i-\frac{C(\theta)}{Wv\log v}\right)\\
%------2
\overset{\eqref{eq:coefficients of uii^2}}{=}&\frac{2W(W+\abs{\cos\theta}u_nb_n)-\abs{\cos\theta}^{2}W^{2}(b_1^{2}+b_i^2)}{v}-C\frac{W}{\log v}
-C\frac1{Wv\log v}\\
%-------3
\geq&\underbrace{\frac{W^2}v\left((1-\abs{\cos\theta}^2)+\left(\abs{\cos\theta}b_n+1\right)^2\right)}_{\coloneqq {\bf I}_2}+\frac{W^2}v\left(\frac{2\abs{\cos\theta}b_n(u_n-W)}{W}\right)\underbrace{-\frac{CW}{\log v}
-\frac{C}{Wv\log v}}_{\coloneqq{\bf I}_3}\\
\geq&\frac{W^2}v\left(\frac12(1-\abs{\cos\theta}^2)+\left(\abs{\cos\theta}b_n+1\right)^2\right),
%+2\abs{\cos\theta}b_n\frac{u_n-W}{W}\right),
}
where we have used the fact that $b_1^2+b_i^2\leq1-b_n^2$ in the first inequality, and  in the last inequality we absorbed ${\bf I}_3$ into ${\bf I}_2$, since the negative terms $-C\frac{W}{\log v}-C\frac1{Wv\log v}$, together with the fact that the term $\frac{W^2}v\frac{2\abs{\cos\theta}b_n(u_n-W)}W=\frac{W^2}vO\left(\frac{u_n}W-1\right)$ can be controlled by the first term of ${\bf I}_2$ in terms of sufficiently large $\abs{Du(z_0)}$, thanks to the fact that $(1-\abs{\cos\theta})^2>0$. 
In particular, if $\left(\mathbf{C}_1-\frac{C(\theta)}{Wv\log v}\right)<0$, then {\bf Claim} follows immediately from the above estimate.

%------
\iffalse
Now we assume by contradiction that
\eq{
\mathbf{C}_1v
\approx W\left(W+\abs{\cos\theta}u_nb_n\right)-\abs{\cos\theta}^2W^{2}b_1^2<0,
}
and 
\eq{
\mathbf{C}_2v
\approx W\left(W+\abs{\cos\theta}u_nb_n\right)-\abs{\cos\theta}^2W^{2}b_2^2<0.
}
Adding these two equalities we thus find
\eq{
2\left(1+\abs{\cos\theta}\frac{u_{n}}{W}b_n\right)-\abs{\cos\theta}^2\left(b_{1}^{2}+b_{2}^{2}\right)
<0,
}
from which we infer that $2\left(1+\abs{\cos\theta}\frac{u_{n}}{W}b_n\right)
\leq\abs{\cos\theta}^2\left(1-b_{n}^{2}\right)$.
However, this leads to
\eq{
1-\abs{\cos\theta}^{2}
&\leq-1-\abs{\cos\theta}^2b_{n}^{2}-2\abs{\cos\theta}\frac{u_{n}}{W}b_n\\
&\leq-\left(1+\abs{\cos\theta}b_{n}\right)^{2}-2\abs{\cos\theta}\left(\frac{u_{n}}{W}-1\right)b_n,
}
which is not possible if $W$ is sufficiently large, and hence a contradiction.
In particular, this proves the {\bf Claim}.
\fi
%-----

Thus, the only scenario that we need to be worried about is when $\mathbf{C}_1-\frac{C(\theta)}{Wv\log v}<0$, otherwise the argument in {\bf Step 2} applies and  the proof is  completed.
{
Moreover, back to the expression of $\mathbf{C_1}$ (recall \eqref{eq:coefficients of uii^2}) we see, in terms of sufficiently large $\abs{Du(z_0)}$, we could absorb the term $-\frac{C(\theta)}{Wv\log v}$ into $-C\frac{W}{\log v}$.
So instead of assuming $\mathbf{C}_1-\frac{C(\theta)}{Wv\log v}<0$, let us assume $\mathbf{C}_1\leq0$ in all follows.
}

To proceed, we rewrite \eqref{eq:MSE-z_0} to find (recall that we have assumed $H\equiv0$)
\eq{
u_{11}
=-\sum_{i=2}^{n-1}u_{ii}-\frac{u_{nn}}{W^{2}},
}
and then we have
\eq{\label{u11}
%----1
u_{11}^2
=&\left(\sum_{i=2}^{n-1}u_{ii}\right)^2+\left(\frac{u_{nn}}{W^{2}}\right)^2+2\left(\sum_{i=2}^{n-1}u_{ii}\right)\left(\frac{u_{nn}}{W^{2}}\right)\\
%-----2
\leq&(n-2)\sum_{i=2}^{n-1}u_{ii}^2+\left(\frac{u_{nn}}{W^{2}}\right)^2+\frac{\varepsilon_0}{2(n-2)}\left(\sum_{i=2}^{n-1}u_{ii}\right)^2+\frac{2(n-2)}{\varepsilon_0}\left(\frac{u_{nn}}{W^{2}}\right)^2\\
%-----3
\leq&\left(n-2+\frac{\varepsilon_0}{2}\right)\sum_{i=2}^{n-1}u_{ii}^2+\frac{2(n-2)+\varepsilon_0}{\varepsilon_0}\left(\frac{u_{nn}}{W^{2}}\right)^2,
}
where $\varepsilon_0$ is determined later.
%----
\iffalse
Then, since $H\leq\mathbf{C}_H$ is bounded on the whole $\mathbb{R}^n_+$, for sufficiently large $W$ we have
\eq{
\frac{u_{nn}-H}{W^2}
\approx\frac{u_{nn}}{W^2}.
}
\fi
%-----
Now we use \eqref{eq:u_nn}, \eqref{defn:A}, \eqref{ineq:ep_3-varphi}, and \eqref{eq:AW-v} to further estimate that
\eq{
\frac{\abs{u_{nn}}}{W^2}
\leq\frac{C(\theta)}{W^2}\sum_{i=1}^{n-1}\abs{u_{ii}}+2\frac{v\log v}{AW^2}\frac{\varphi'u_n}{\varphi}
\approx O\left(\frac1{W^2}\right)\sum_{i=1}^{n-1}\abs{u_{ii}}+2\log v\frac{\varphi'}{\varphi},
}
and hence
\eq{
\left(\frac{u_{nn}}{W^2}\right)^2
\le O\left(\frac1{W^4}\right)u_{11}^2+ O\left(\frac1{W^4}\right)\sum_{i=2}^{n-1}u_{ii}^2+8(\log v)^{2}\left(\frac{\varphi'}{\varphi}\right)^{2}.
}
Together with \eqref{u11}, we obtain
\eq{\label{ineq:u_11^2}
u_{11}^2
\leq\left(n-2+\varepsilon_{0}\right)\sum_{i=2}^{n-1}u_{ii}^{2}+C(\varepsilon_{0})(\log v)^{2}\left(\frac{\varphi'}{\varphi}\right)^{2}.
}

%and the {\bf Claim},
Recalling  the definition of $\mathbf{C}_{i}$ in {\bf Step 2.3}, by \eqref{ineq:u_11^2} and {$\mathbf{C}_1\leq0$},  we obtain 
\eq{\label{eq:rerangment of uii^2}
%-----1
&\sum_{i=1}^{n-1}(\mathbf{C}_i-\frac{C(\theta)}{Wv\log v})\frac{u_{ii}^{2}}{v\log v}\\
%-----2
\ge&\sum_{i=2}^{n-1}\bigg(\frac{W(W+\abs{\cos\theta}u_nb_n)-\abs{\cos\theta}^{2}W^{2}b_i^{2}}{v}\\
%-----3
+&\left(n-2+\varepsilon_{0}\right)\left(\frac{W(W+\abs{\cos\theta}u_nb_n)-\abs{\cos\theta}^{2}W^{2}b_1^{2}}{v}\right)-C\frac{W}{\log v}\bigg){\frac{u_{ii}^{2}}{v\log v}}
-C(\varepsilon_{0})\log v\left(\frac{\varphi'}{\varphi}\right)^{2}\\
%-----4
\ge&{\frac{W^2}{v^2\log v}\sum_{i=2}^{n-1}\mcB_{i}}u_{ii}^{2}-C{\frac{W}{v(\log v)^2}}\sum_{i=2}^{n-1}u_{ii}^{2}-C(\varepsilon_{0})\log v\left(\frac{\varphi'}{\varphi}\right)^{2}
}
where %we have used the fact that {$\mathbf{C}_1\leq0$}
%with $\mathbf{C_1}=O\left(\frac{W^2}v\right)$
%in the first inequality, and to derive the second inequality, 
for $2\le i\le n-1$,
\eq{
\mcB_{i}
\coloneqq 1+\abs{\cos\theta}b_{n}-\abs{\cos\theta}^2b_{i}^{2}+\left(n-2+\varepsilon_{0}\right)\left(1+\abs{\cos\theta}b_{n}-\abs{\cos\theta}^{2}b_1^{2}\right).
}
Recall that $\sum_{i=1}^nb_i^2=1$,
elementary computations then give
\eq{\label{defn:msB}
%------0
\mcB_{i}
%&1+\beta b_{n}-b_{i}^{2}\beta^{2}+(n-2+\varepsilon_{0})(1+\beta b_{n}-b_{1}^{2}\beta^{2})\\
%------1
%&=\left(n-1+\varepsilon_{0}\right)\left(1+\abs{\cos\theta}b_{n}\right)-\abs{\cos\theta}^{2}b_i^{2}-\left(n-2+\varepsilon_{0}\right)\abs{\cos\theta}^{2}b_1^{2}\\
%------2
&\geq\left(n-1+\varepsilon_{0}\right)\left(1+\abs{\cos\theta}b_{n}\right)-\abs{\cos\theta}^2(1-b_{n}^{2})-(n-3+\varepsilon_{0})\abs{\cos\theta}^{2}b_1^{2}\\
%-------3
&\ge(n-1+\varepsilon_{0})(1+\abs{\cos\theta}b_{n})-\abs{\cos\theta}^2(1-b_{n}^{2})-(n-3+\varepsilon_{0})\abs{\cos\theta}^2(1-b_{n}^{2})\\
%--------4
%&=(n-1+\varepsilon_{0})(1+\abs{\cos\theta}b_{n})-(n-2+\varepsilon_{0})\abs{\cos\theta}^2(1-b_{n}^{2})\\
%--------5
&=(n-1+\varepsilon_{0})-(n-2+\varepsilon_{0})\abs{\cos\theta}^{2}+(n-1+\varepsilon_{0})\abs{\cos\theta}b_{n}+(n-2+\varepsilon_{0})\abs{\cos\theta}^{2}b_n^{2}\\
%--------6
&=\left(n-2+\varepsilon_{0}\right)\left(\abs{\cos\theta} b_{n}+\frac{n-1+\varepsilon_{0}}{2(n-2+\varepsilon_{0})}\right)^{2}-\frac{(n-1+\varepsilon_{0})^{2}}{4(n-2+\varepsilon_{0})}\\
%--------7
&\quad+(n-1+\varepsilon_{0})-(n-2+\varepsilon_{0})\abs{\cos\theta}^{2}\\
%--------9
&\geq-\frac{(n-1+\varepsilon_{0})^{2}}{4(n-2+\varepsilon_{0})}+(n-1+\varepsilon_{0})-(n-2+\varepsilon_{0})\abs{\cos\theta}^{2}
\eqqcolon\msB.
}
The ideal situation is that $\msB$ is a positive number, which is equivalent to the inequality
\eq{\label{eq:further condition}
\frac{n-1+\varepsilon_{0}}{n-2+\varepsilon_{0}}\left(1-\frac{n-1+\varepsilon_{0}}{4(n-2+\varepsilon_{0})}\right)
>\abs{\cos\theta}^{2}.
}
To make sure that we could always find some $\varepsilon_0=\varepsilon_0(n,\theta)$ to fulfill \eqref{eq:further condition}, notice that when $\varepsilon_0=0$ the LHS of the above inequality reads $\frac{(3n-7)(n-1)}{4(n-2)^{2}}$, and hence by monotonicity, condition \eqref{condi:msU} ensures the existence of $\varepsilon_0$.
Note that for $n=3$, \eqref{condi:msU} is trivially satisfied by all $\theta\in(0,\pi)$.
We emphasize that
this is the only place that we need to assume \eqref{condi:msU}, and let us fix one such $\varepsilon_0=\varepsilon_0(n,\theta)<1$ fulfilling \eqref{eq:further condition} in all follows.

Back to \eqref{eq:rerangment of uii^2}, we thus obtain for sufficiently
large $W$:
\eq{\label{ineq:msB}
\sum_{i=1}^{n-1}(\mathbf{C}_i-\frac{C(\theta)}{Wv\log v})\frac{u_{ii}^{2}}{v\log v}
\geq\msB\frac{W^2}{v\log v}\sum_{i=2}^{n-1}u_{ii}^{2}-C\frac{W}{v(\log v)^2}\sum_{i=2}^{n-1}u_{ii}^{2}-C(\varepsilon_{0})\log v\left(\frac{\varphi'}{\varphi}\right)^{2}.
}
\iffalse
We want to conclude the proof as in {\bf Step 2} by using \eqref{eq:final expression} and \eqref{eq:lower bound og unn2},
%(with $\varepsilon_3=\frac18$)
so we have to further estimate the last term appearing in \eqref{eq:lower bound og unn2} as follows (recall $\varepsilon_0=\varepsilon_0(n,\theta)<1$ has been fixed):
\eq{\label{ineq:C(theta)u_ii^2}
-C(\theta)\sum_{i=1}^{n-1}u_{ii}^2
\overset{\eqref{ineq:u_11^2}}{\geq}
-C(\theta)(n-2)\sum_{i=2}^{n-1}u_{ii}^2-C(n,\theta)(\log v)^2\left(\frac{\varphi'}\varphi\right)^2
.}
\fi
Finally, by \eqref{ineq:final-esti-step2}, %the crucial estimate \eqref{eq:final expression}, and  \eqref{eq:lower bound og unn2}, %(with $\varepsilon_3=\frac18$)\eqref{ineq:W^3-mcT_n-msH_k}, 
\eqref{ineq:msB}, %and \eqref{ineq:C(theta)u_ii^2} into account, to obtain
\eq{
%------1
0\ge&\frac34\frac{A}{2A^{2}W}u_{n}^{2}v\log v\left(\frac{\varphi'}{\varphi}\right)^{2}-C\log v\left(\frac{\varphi'}{\varphi}\right)^{2}
-u_n^{2}\left(\frac{\varphi'}{\varphi}\right)^{2}-CW^{2}\frac{1}{\psi r^{2}}\\
%-\frac{C(n,\theta)}{Wv\log v}\sum_{i=2}^{n-1}u_{ii}^{2}-\frac{C\log v}{Wv}\left(\frac{\varphi'}{\varphi}\right)^2\\
%------2
&+\sum_{i=2}^{n-1}\msB\frac{W^2}{v\log v}u_{ii}^{2}-C\frac{W}{v(\log v)^2}\sum_{i=2}^{n-1}u_{ii}^{2}%-C\frac{1}{\psi r^{2}}
.
%\\
%\ge &(1-2\varepsilon_{3})\frac{1}{2}(\frac{\varphi'}{\varphi})^{2}\log vu_{n}^{2}+\abs{Du}^{2}(\frac{\varphi''}{\varphi}-(\frac{\varphi'}{\varphi})^{2})-C\frac{1}{\psi r^{2}}-CW^{2}\frac{1}{\psi r^{2}}-C\frac{|\nabla H|}{v\log v},
}
In the above estimate,
since the coefficients of the terms involving $u_{ii}^2$ are given by
\eq{
\msB\frac{W^2}{v\log v}
%-\frac{C(n,\theta)}{Wv\log v}
-C\frac{W}{v(\log v)^2},
}
%by virtue of our choice of $\varepsilon_0=\varepsilon_0(n,\theta)$, 
and $\msB$ is a positive number,   the coefficients of $u_{ii}^2$ are positive provided that $W$ is sufficiently large. This  implies 
%for sufficiently large $W$ that
(recall \eqref{eq:AW-v})
\eq{\label{ineq:final-esti-step3}
%------1
0
\ge&\frac14u_{n}^{2}\log v\left(\frac{\varphi'}{\varphi}\right)^{2}-C\left(n,\theta\right)\log v\left(\frac{\varphi'}{\varphi}\right)^{2}
%------3
-u_n^{2}\left(\frac{\varphi'}{\varphi}\right)^{2}%-C\frac{1}{\psi r^{2}}
-CW^{2}\frac{1}{\psi r^{2}}\\
\geq&\frac18u_{n}^{2}\log v\left(\frac{\varphi'}{\varphi}\right)^{2}-CW^{2}\frac{1}{\psi r^{2}}.
}
Concluding as the end in  {\bf Step 2.4}, we thus obtain \eqref{eq:Du-C_1-C_2-C_3-ii} under the assumptions of ($ii$).

Finally, if $u$ has linear growth, then $\frac{M}{r}=\frac{\sup_{E_r}\abs{u(x)}+r}r\leq C(\theta,C_0)$.
Letting $r\ra\infty$ in the estimate \eqref{eq:Du-C_1-C_2-C_3-ii}, and recall that $\lim_{r\ra\infty}E_{\theta,r}=\mathbb{R}^n_+$, we thus obtain \eqref{conclu:Du-bdd}, which completes the proof.
\end{proof}

%------

In view of {\bf Step  3} in the above proof, we see that the angle restriction is mainly due to the negativity of the coefficient of the term $u_{11}^2$ appearing in  \eqref{ineq:maximun-principle}, where the subscript $1$ refers to $\tilde e_1$, the direction that majorly contributes to $\sum_{i=1}^n\left<-e_1,\tilde e_i\right>^2=1-\left<-e_1,\tilde e_{n+1}\right>^2$ in the rotated coordinates.
In principle $\left<-e_1,\tilde e_1\right>$ could be as close as to $1$, which makes the angle restriction look essential.

A way to overcome this, as we are going to see, is to impose an extra condition that $u$ is one-sided bounded by a linear function on the half-space.

%======
\subsection{Global gradient estimate for minimal surface equation %with restricted boundary condition
}\label{Sec-3-2}

\begin{theorem}\label{Thm:gradient-esti-u<=0}
Let %$n\geq4$,
$\theta\in(0,\pi)$ and  $u$ be a $C^2$-solution of the mean curvature equation \eqref{eq:MSE}, such that its graph $\S$ is a capillary minimal graph in the sense of Definition \ref{Defn:capillary-graph}.
Assume that %$\abs{H}+\abs{DH}\leq\mathbf{C}_H$ on $\mathbb{R}^n_+$ for some positive constant $\mathbf{C}_H$, and that
$u$ has linear growth on $\mathbb{R}^n_+$, namely, $\abs{u(x)}\leq C_{0}(1+\abs{x})$ for some constant $C_0>0$.

There exists a positive constant $\widehat\Lambda=\widehat\Lambda(n,\theta,C_0)$ with the following property:
If $u$ is bounded from above by a linear function $L$ on $\mathbb{R}^n_+$, with $\abs{DL}\leq\frac1{36}\frac {\abs{\cos\theta}(1-\sin\theta)}{(1+\frac{|\cos\theta |}{\sin\theta})}\eqqcolon C_\theta$,
then
\eq{\label{conclu:gradient-estimate-bounded-above-L}
\sup_{\overline{\mathbb{R}^n_+}}\abs{Du}
\leq\widehat\Lambda.
}
\end{theorem}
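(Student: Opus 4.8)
The plan is to run the maximum-principle scheme of the proof of Theorem \ref{Thm:gradient-estimate}, but with the ellipsoidal cut-off $\psi$ replaced by one that records the one-sided deficit $w:=u-L\le 0$, in the spirit of Korevaar \cite{Korevaar1986} and X.-J.\ Wang \cite{Wang98}; the favourable term produced by this deficit is what will absorb the coefficient of $u_{11}^2$ that forced the angle restriction $\theta\in\msU$ in \textbf{Step 3} of Theorem \ref{Thm:gradient-estimate}, thereby removing any restriction on $\theta$. By Remark \ref{Rem:theta>pi/2} we may assume $\theta\in(\frac\pi2,\pi)$. Fix $r>0$, put $M:=\sup_{E_r}\abs{u}+r$ (so $M/r\le C(\theta,C_0)$ by linear growth), and consider on $E_r$ the function
\eq{
G^\ast(x)=\varphi(u(x))\,\psi^\ast(x)\,\log v(x),\qquad
\varphi(s)=\frac{s}{2M}+1,\qquad
\psi^\ast(x)=\left(Q(x)+\frac{u(x)-L(x)}{2N_\ast M}\right)^{2},
}
where $Q$ is as in \eqref{eq_a2} and $N_\ast=N_\ast(n,\theta,C_0)$ is a sufficiently large constant. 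Since $\psi^\ast$ vanishes on the level set $\{Q+\tfrac{u-L}{2N_\ast M}=0\}$, one works on the open region $\mcD_r$ bounded by this set and by $\p_{rel}E_r$; the normalisation (using the linear growth of $u$) is chosen so that $\overline{E_{\theta,r}}$ lies well inside $\mcD_r$, with $\tfrac12\le\varphi(u)\le\tfrac32$ and $\psi^\ast$ pinched between two positive constants there, while $G^\ast\equiv 0$ on the free part of $\p\mcD_r$. Thus an upper bound for $\max_{\overline{\mcD_r}}G^\ast=G^\ast(z_0)$, with $z_0$ either an interior point of $\mcD_r$ or a point of $\p\mcD_r\cap\p\mathbb{R}^n_+$, controls $\log v$, hence $\abs{Du}$, on $\overline{E_{\theta,r}}$.

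Next I would treat the boundary case $z_0\in\p\mathbb{R}^n_+$, repeating the Hopf-type computation of \textbf{Step 1} of Theorem \ref{Thm:gradient-estimate}: using $u_1=-\cos\theta W$ on $\p\mathbb{R}^n_+$, the identity $\langle\na v,\mu\rangle=0$ of Lemma \ref{good_property} (so that $a^{ij}v_i(x_1)_j=0$), the relation \eqref{eq:bar-Du-u_1-bdry}, and the explicit form of $Q_i$ on $\{x_1=0\}$, one computes $a^{i1}(\log G^\ast)_i$ at $z_0$. The one change is the extra term $\tfrac{u_1-L_1}{2N_\ast M}$ inside $\psi^\ast_1$: apart from reproducing the favourable quantity of \eqref{eq:a^i1-psi_i}, it contributes an error proportional to $\abs{DL}$, and the requirement $\abs{DL}\le C_\theta=\tfrac1{36}\tfrac{\abs{\cos\theta}(1-\sin\theta)}{1+\abs{\cos\theta}/\sin\theta}$ is precisely what keeps this error subordinate to the favourable term (uniformly in the large parameter $N_\ast$), so that $a^{i1}(\log G^\ast)_i$ remains bounded below by a positive multiple of $\sqrt{1+\abs{\bar Du}^2}\,\abs{\bar Du}$ minus a harmless $O(W/M)$ term. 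The maximality $a^{i1}(\log G^\ast)_i\le 0$ then forces $\abs{Du(z_0)}\le C(\theta)$, which closes this case as in \eqref{esti:Thm3.2-bdry-maximum}.

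For the interior case $z_0\in\mcD_r$ one has $(\log G^\ast)_i=0$ and $a^{ij}(\log G^\ast)_{ij}\le 0$. I would rotate coordinates exactly as in \textbf{Step 2} so that $\abs{Du(z_0)}=u_n$, $(u_{ij})_{i,j\le n-1}$ is diagonal and $v=W+\abs{\cos\theta}u_nb_n$, and carry the $\varphi$- and $v$-contributions through verbatim, using $a^{ij}u_{ij}=0$ for the minimal surface equation (so that $a^{ij}\varphi_{ij}=0$ and the Hessian of $w$ also disappears from $a^{ij}\psi^\ast_{ij}$). The new input is the first-order optimality relation in the $n$-direction, in which $\tfrac{w_n}{2N_\ast M}\approx\tfrac{u_n}{2N_\ast M}$ is now a large positive term, while $Q+\tfrac{w}{2N_\ast M}\le Q\le1$ because $w\le0$; tracking this through the analogues of \eqref{eq:lower bound og unn2} and \eqref{eq:coefficients of uii^2} enlarges the lower bound for $u_{nn}^2$ and adds a favourable term to each coefficient $\mathbf{C}_i$, so that the previously dangerous coefficient $\mathbf{C}_1$ of $u_{11}^2$, together with the minimal surface relation $u_{11}=-\sum_{i=2}^{n-1}u_{ii}-u_{nn}/W^2$, can be absorbed with \emph{no} restriction on $\theta$, once $N_\ast$ is chosen large enough. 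Balancing this gain against the error terms $O(W^2/r^2)$, $O(W^2/\log v)$ and the $\cos\theta u_1$ cross-terms, as in \textbf{Step 2.4}, yields at $z_0$ an estimate $\psi^\ast\log v\le C_1+C_2\tfrac Mr+C_3\tfrac{M^2}{r^2}$ with $C_i=C_i(n,\theta)$; combining with the boundary case gives $\sup_{\overline{E_{\theta,r}}}\abs{Du}\le\tfrac1{1-\abs{\cos\theta}}\exp(C_1+C_2\tfrac Mr+C_3\tfrac{M^2}{r^2})$ for all $r>0$, and letting $r\to\infty$ (using $E_{\theta,r}\to\mathbb{R}^n_+$ and $M/r\le C(\theta,C_0)$) produces $\sup_{\overline{\mathbb{R}^n_+}}\abs{Du}\le\widehat\Lambda(n,\theta,C_0)$.

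I expect the interior case to be the main obstacle: one must verify that the sign $w\le0$ makes the extra $\psi^\ast$-term genuinely \emph{cooperate} with, rather than worsen, the problematic $\cos\theta u_1$ contributions in $v$, uniformly over $\theta\in(0,\pi)$, and that a single large $N_\ast$ can be chosen compatibly with the smallness of $C_\theta$ needed in the boundary step and with the requirement that $\psi^\ast$ stay pinched on $\overline{E_{\theta,r}}$ — the delicate bookkeeping referred to in the Remark after Theorem \ref{Thm:Liouville}.
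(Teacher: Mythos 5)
Your overall framework (a cut-off recording the one-sided deficit $u-L$, a Hopf-type boundary step in which $\abs{DL}\le C_\theta$ controls the extra error, then an interior maximum-principle step, and finally $r\to\infty$) is the same as the paper's, and your boundary step is essentially the paper's \textbf{Step 2} of Theorem \ref{Thm:gradient-esti-u<=0}. The gap is in the interior step, which you yourself flag as the main obstacle. You claim that the deficit term ``adds a favourable term to each coefficient $\mathbf{C}_i$'' and that, together with $u_{11}=-\sum_{i\ge2}u_{ii}-u_{nn}/W^{2}$, this absorbs the dangerous $\mathbf{C}_1u_{11}^2$ for every $\theta$. It does not: the coefficients $\mathbf{C}_i$ of \eqref{eq:coefficients of uii^2} come solely from $a^{ij}v_{ij}-\frac{1+\log v}{v\log v}a^{ij}v_iv_j$ and are untouched by the choice of cut-off; and since $u_i(z_0)=0$ for $i\le n-1$, the deficit enters the first-order conditions only in the $n$-th direction, so its sole effect is to enlarge the lower bound for $u_{nn}^2$ (the analogue of \eqref{eq:lower bound og unn2}). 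Eliminating $u_{11}$ via the minimal surface equation then reproduces verbatim the competition $\sum_{i\ge2}\bigl(\mathbf{C}_i-(n-2+\varepsilon_0)\abs{\mathbf{C}_1}\bigr)u_{ii}^2\ge0$ from \textbf{Step 3} of Theorem \ref{Thm:gradient-estimate}, i.e. the quantity $\msB$ in \eqref{defn:msB}, and with it exactly the angle restriction \eqref{condi:msU} you set out to remove; the improved $u_{nn}^2$ bound never enters that competition. The paper's actual mechanism is different: it keeps $u_{11}$, adds and subtracts $\frac{a^{11}v_1^2(1+\log v)}{(v\log v)^2}$, and uses the first-order condition $v_1=-v\log v\,\psi^\ast_1/\psi^\ast$ together with $v_1=\abs{\cos\theta}b_1u_{11}+Au_{n1}$ to inject a positive multiple of $u_{11}^2$, turning its coefficient into $\ge W^{2}\frac{\sin^{2}\theta+(1+\abs{\cos\theta}b_n)^{2}}{2v}$ for every $\theta$ (see \eqref{ineq:P_1-1}, \eqref{ineq:C_11-non-positive}); the prices $-A^2u_{n1}^2$ and $-W^2(1+\log v)(\psi^\ast_1/\psi^\ast)^2$ are then paid using \eqref{eq:u_ni} and, crucially, the deficit-enhanced $u_{nn}^2$ bound. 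That is what the sign $u-L\le0$ buys — it finances the cut-off-gradient error $-W^2(1+\log v)(\psi^\ast_1/\psi^\ast)^2\sim -W^2\log v/(r^2\psi^\ast)$ — it does not improve the $\mathbf{C}_i$'s.

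Relatedly, your requirement ``$N_\ast$ sufficiently large'' points in the wrong direction. The good term produced by the deficit scales like $u_n^2\log v/(N_\ast^2r^2\psi^\ast)$ (with your normalisation, $M$ in place of $r$), so enlarging $N_\ast$ shrinks it below the error $\sim W^2\log v/(r^2\psi^\ast)$ it must dominate. The paper needs the opposite smallness: $N_\ast=\frac1{36}$ together with $C_\theta\le 2N_\ast$, precisely so that the cancellation in \eqref{positive lower bound of P0} goes through; your constraint that $\psi^\ast$ stays pinched on the inner region pushes the same way, since a heavily weighted deficit is what must be compensated by the linear-growth hypothesis. Replacing $r$ by $M$ in the deficit is harmless under linear growth, but as written your interior case is not closed for $\theta\notin\msU$, which is the whole content of the theorem beyond Theorem \ref{Thm:gradient-estimate}.
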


\begin{proof}
Recalling Remark \ref{Rem:theta>pi/2},
in the following we only consider those $\theta\in(0,\pi)\setminus\{\frac\pi2\}$.

\noindent{\bf Step 1. We construct modified cut-off functions and set things up.
}

In contrast to Lemma \ref{Lem:cut-off}, we consider the following modified cut-off function:
For any large $r>0$,
define
\eq{\label{defn:bar-psi}
Q^\ast(x)
=\left(\underbrace{1-\frac{(x_{1}-\abs{\cos\theta}r)^{2}+\sin^{2}\theta\abs{x'}^{2}}{r^{2}}}_{=Q(x)}+\frac{u(x)-L(x)}{2N_\ast r}\right)
}
and 
\eq{\label{defn:psi^ast}
\psi^\ast
=\left(Q^\ast\right)^2
%=\left(1-\frac{(x_{1}-\abs{\cos\theta}r)^{2}+\sin^{2}\theta\abs{x'}^{2}}{r^{2}}+\frac{u(x)}{2N_\ast r}\right)^{2},
}
where we can choose $N_\ast=\frac{1}{36}$, determined at the end of the proof.

Then we fix a ``inner'' region which enlarges as $r$ increases and converges to $\mathbb{R}^n_+$ as $r\ra\infty$, on which $\psi^\ast$ has an absolute lower bound independent on $r$, playing the same role as $E_{\theta,r}$ in the proof of Theorem \ref{Thm:gradient-estimate}.
%After that we fix a ``outer'' region, which strictly contains $W_{r_\ast}$, and playing the same role as $E_r$ in the proof of Theorem \ref{Thm:gradient-estimate}.
In fact, we assert that there exists a $W_{r_\ast}\subset E_{\theta, r}$ such that $\lim_{r\ra\infty}W_{r_\ast}=\mathbb{R}^n_+$ and 
\eq{\label{ineq:lower-bdd-psi^ast}
\inf_{W_{r_\ast}}\psi^\ast
>c^*>0
%\left(\mcD_\theta-\frac{\mcD_\theta}{2}\right)^2
%=\frac{\mcD_\theta^2}4.
}
where the positive constant $c^*$ depends on $\theta,n, C_0$.
Here for example, a possible choice is to take $W_{r_\ast}=B_{\alpha r}^+
\coloneqq\left\{(x_1,x'):x_1>0, \abs{x_1}^2+\abs{x'}^2< \alpha^2r^2\right\}$ for some suitably chosen $\alpha=\alpha(n,\theta,C_0)>0$.
Once the required ``inner'' region $W_{r_\ast}$ is fixed, we can then take the ``outer'' region simply to be the $0$-super level-set of $Q^*$, i.e.,
\eq{\label{defn:W_r}
W_r\coloneqq\{x:x_1>0, Q^\ast>0\},
}
which strictly contains $W_{r_\ast}$, and plays the same role as $E_r$ in the proof of Theorem \ref{Thm:gradient-estimate}.

Clearly $W_r$ is bounded, relatively open in $\mathbb{R}^n_+$, and in fact we have $W_r\subset E_r$ (recall \eqref{Er}) thanks to the fact that $u-L\leq0$.

We then conduct some useful computations needed in the next step.
By \eqref{defn:psi^ast}
we have
\eq{\label{eq:p_1-psi}
\p_1\psi^\ast
%&=2\left(1-\frac{(x_{1}-|\cos\theta|r)^{2}+\sin^{2}\theta|x'|^{2}}{r^{2}}+\frac{u}{2N_\ast r}\right)\left(-\frac{2}{r^{2}}(x_{1}-\abs{\cos\theta} r)+\frac{u_{1}}{2N_\ast r}\right)\\
&=2(\psi^\ast)^{\frac{1}{2}}\left(-\frac{2}{r^{2}}(x_{1}-\abs{\cos\theta} r)+\frac{u_{1}-L_1}{2N_\ast r}\right),
}
and hence on $\p\mathbb{R}^n_+$, 
\eq{
\p_1\psi^\ast
%=2(\psi^\ast)^{\frac{1}{2}}\left(\frac{2}{r}\abs{\cos\theta}+\frac{u_{1}}{2N_\ast r}\right)
=\frac{4}{r}(\psi^\ast)^{\frac{1}{2}}\abs{\cos\theta}+\frac{(\psi^\ast)^{\frac{1}{2}}(u_{1}-L_1)}{N_\ast r}.
}
And for for $i\in\{2,\ldots,n\}$, we have
\eq{\label{eq:p_i-psi}
\p_i\psi^\ast
%&=2\left(1-\frac{(x_{1}-\abs{\cos\theta}r)^{2}+\sin^{2}\theta\abs{x'}^{2}}{r^{2}}+\frac{u}{2N_\ast r}\right)\left(-\frac{2\sin^{2}\theta}{r^{2}}x_{i}+\frac{u_{{i}}}{2Nr}\right)\\
&=2(\psi^\ast)^{\frac12}\left(-\frac{2\sin^{2}\theta}{r^{2}}x_{i}+\frac{u_i-L_i}{2N_\ast r}\right).
}

We finish this step by building up the frameworks for the following discussions.
As in the proof of Theorem \ref{Thm:gradient-estimate} we consider the function 
\eq{
G^\ast(x)
=\varphi(u(x))\psi^\ast(x)\log v(x),
}
where $\varphi(s)=\frac{s}{2M_\ast}+1$ with $M_\ast=\sup_{W_r}\abs{u}+r$.
Assume that
\eq{
\max_{\overline{W_r}}G^\ast
=G^\ast(z_0).
}
Our goal is to show that $\sup_{\overline{W_{r_*}}}\abs{Du}$ is bounded by some constant independent of $r$.
By \eqref{ineq:lower-bdd-psi^ast}, we have
\eq{\label{ineq:lower-bd-varphi-psi-ast}
\frac12<\varphi(u)<\frac32,\quad
c^*\leq\psi^\ast\leq1,\quad\text{on }\overline{W_{r_\ast}}\subset\overline{W_r},
}
so we  assume that $G^\ast(z_0)$ is positive and sufficiently large, otherwise there is nothing to prove.
In this case, $\sup_{\overline{W_{r_*}}}\abs{Du}$, $\sup_{\overline{W_{r_*}}}W$, and $\sup_{\overline{W_{r_*}}}v$ are sufficiently large. 
By construction of $W_r$, we see $z_0\notin\p_{rel}W_r$.

The step is thus finished. We point out that,
in the following we shall refer to the computations carried out in the proof of Theorem \ref{Thm:gradient-estimate} from time to time, and if the cut-off function is involved, readers should replace automatically $\psi$ therein by the modified cut-off functions $\psi^\ast$.

\noindent{\bf Step 2. We deal with the case that $z_0\in\p W_r\setminus\p_{rel}W_r$.
}

First, since $\p_{rel}W_r$ is the $0$-level set of $\psi^\ast$, and that $u-L\leq0$ on $\mathbb{R}^n_+$ by assumption,
we see that (recall Lemma \ref{Lem:cut-off})
$\left(\p W_r\setminus\p_{rel}W_r\right)
\subset\left(\p E_r\setminus\p_{rel}E_r\right)$, from which we infer that
\eq{\label{esti:sin-theta-x'<r}
%\sin\theta\abs{x'}< r,\quad\forall x=(x_1,x')\in W_r;\quad
\abs{x'}<r, \quad\forall x\in\p W_r\setminus\p_{rel}W_r.
}

As in \eqref{eq:a^i1-psi_i},  by \eqref{eq:p_1-psi}, \eqref{eq:p_i-psi}, and \eqref{eq:bar-Du-u_1-bdry}, we compute
\eq{\label{eq:a^i1-psi_i^ast}
%------1
&a^{i1}\frac{\psi^\ast_{i}}{\psi^\ast}
=\frac{W^2}{\psi^\ast}\left(\psi^\ast_1-\frac{\sum_{i=1}^nu_iu_1\psi^\ast_i}{W^2}\right)\\
%------2
&{=}\frac{W^2}{\psi^\ast}\left(\left(\frac{4}{r}\abs{\cos\theta}+\frac{u_1-L_1}{N_\ast r}\right)(\psi^\ast)^{\frac{1}{2}} \frac{1+\abs{\bar Du}^2}{W^2}+\frac{\sum_{i=2}^nu_iu_1\left(\frac{4\sin^2\theta x_i}{r^2}-\frac{u_i-L_i}{N_\ast r}\right)(\psi^\ast)^\frac12}{W^2}\right)\\
%-------3
&=\frac{1}{r(\psi^\ast)^\frac12}\left(4\abs{\cos\theta}(1+\abs{\bar Du}^2)+4\sin^2\theta\sum_{i=2}^nu_iu_1\frac{x_i}{r}+\frac{u_1}{N_\ast}-\frac{L_1(1+\abs{\bar Du}^2)-u_1\sum_{i=2}^nu_iL_i}{N_\ast}\right)\\
%------4
&\geq\frac{1}{r(\psi^\ast)^\frac12}\left(4\abs{\cos\theta}(1+\abs{\bar Du}^2)+4\sin^2\theta\sum_{i=2}^nu_iu_1\frac{x_i}{r}+\frac{u_1}{N_\ast}-(1+\frac{|\cos\theta |}{\sin\theta})\frac{\abs{DL}(1+\abs{\bar Du}^2)}{N_\ast}\right).
}
By \eqref{esti:sin-theta-x'<r}, for the same reason as in \eqref{eq:a^i1-psi_i}, we conclude that
\eq{
\left(4\abs{\cos\theta}(1+\abs{\bar Du}^2)+4\sin^2\theta\sum_{i=2}^nu_iu_1\frac{x_i}{r}\right)
\geq4\abs{\cos\theta}(1-\sin\theta)\sqrt{(1+\abs{\bar Du}^2)}\abs{\bar D u}.
}
Since (recall that $N_\ast=\frac1{36}$) by assumption $\abs{DL}\leq C_+
=N_* \frac {\abs{\cos\theta}(1-\sin\theta)}{(1+\frac{|\cos\theta |}{\sin\theta})}$, we thus obtain
\eq{
a^{i1}\frac{\psi^\ast_{i}}{\psi^\ast}
\geq\frac2{r(\psi^\ast)^\frac12}\abs{\cos\theta}(1-\sin\theta)\sqrt{(1+\abs{\bar Du}^2)}\abs{\bar D u}
>0.
}
Hence at  $z_0$, we find (recall \eqref{eq:na-v-mu=0}, we have $a^{ij}v_{i}(x_{1})_{j}=0$)
\eq{\label{ineq:max-bdry-u<=0-hopf-lemma}
%----1
0&\ge a^{ij}(\log G^\ast)_{i}(x_{1})_{j}
=a^{i1}\left(\frac{\varphi'u_{i}}{\varphi}+\frac{v_{i}}{v\log v}+\frac{\psi^\ast_i}{\psi^\ast}\right)
=a^{i1}\frac{\psi^\ast_i}{\psi^\ast}+a^{i1}\frac{\varphi'}{\varphi}u_{i}\\
%----2
\overset{\eqref{eq:a^i1-u_i}}{\ge}&\frac2{r(\psi^\ast)^\frac12}\abs{\cos\theta}(1-\sin\theta)\sqrt{(1+\abs{\bar Du}^2)}\abs{\bar D u}-\frac{1}{2M_\ast\varphi}(\abs{\cos\theta} W).
}
Recalling \eqref{eq:bar-Du-u_1-bdry}, we get $\abs{Du(z_0)}\leq C(\theta)\frac{r(\psi^\ast)^\frac12}{M_\ast}\leq C(\theta)\frac{r}{r+\sup_{W_r}\abs{u}}\leq C(\theta)$.
Since $z_0$ is the maximum point of $G^\ast$, we thus find
\eq{\label{esti:Thm3.3-bdry-maximum}
C(n,\theta)\log v(x)
\overset{\eqref{ineq:lower-bd-varphi-psi-ast}}{\leq}\varphi(u(x))\psi^\ast(x)\log v(x)
=G^\ast(x)
\leq G^\ast(z_0)
\leq C(n,\theta),\quad\forall x\in W_{r_\ast}.
}
The step is thus completed.
Next we study the case that $z_0\in W_r$.

\noindent{\bf Step 3.
We carry out necessary estimates to exploit $0\geq a^{ij}(\log G^\ast)_{ij}$ at $z_0\in W_r$.
}

As in the proof of Theorem \ref{Thm:gradient-estimate}, we assume that $\abs{Du(z_0)}=u_{n}(z_0)$, and $\{u_{ij}(z_0)\}_{1\leq i,j\leq n-1}$ is a diagonal matrix. Also $v
=W+\abs{\cos\theta}\sum_{k=1}^nu_kb_k.$
%We first rotate the coordinates to obtain the properties presented before \eqref{eq:v-z_0}.
Then we follow the computations in {\bf Step 2} of the proof of Theorem \ref{Thm:gradient-estimate}.

At $z_0$, by $(\log G^\ast)_{i}=0$ we get (see \eqref{eq:logG_i}, \eqref{eq:v_i})
\eq{\label{newauxiliaryfirstderivative}
v_i=\frac{u_{n}u_{ni}}{W}+\abs{\cos\theta} u_{ki}b_{k}
=-v\log v\left(\frac{\varphi_{i}}{\varphi}+\frac{\psi^\ast_{i}}{\psi^\ast}\right).
}

\noindent{\bf Step 3.1. We bound $u_{nn}^2$ from below as in \eqref{eq:lower bound og unn2}.
}

We put $\mfQ(x)\coloneqq Q(x)-\frac{L(x)}{2N_\ast r}$ for simplicity 
(recall \eqref{defn:bar-psi}).
Since $W_r\subset E_r$,
the function $\mfQ$ satisfies 
\eq{\label{ineq:DQ}
\abs{D\mfQ(x)}
\leq\abs{DQ}+\frac{\abs{DL}}{2N_\ast r}
\leq \frac{2}{r}+\frac{C_+}{2N_\ast r},\quad
{\abs{D^2\mfQ(x)}
=\abs{D^2Q(x)}
\leq C(n,\theta)\frac{1}{r^2},}\quad
\forall x\in W_r.
}
Thus, at $z_0$ we have 
\eq{
\psi^\ast_n
%&=2\left(Q+\frac{u}{2Nr}\right))(Q_{n}+\frac{u_{n}}{2Nr})\\
=2(\psi^\ast)^\frac12\left(\mfQ_{n}+\frac{u_{n}}{2N_\ast r}\right),
}
and
\eq{\label{eq:psi_i^ast}
\psi^\ast_{i}
%=2(Q(x)+\frac{u}{2Nr})Q_{i}
=2(\psi^\ast)^\frac12\mfQ_{i},\quad i=1,\ldots,n-1.
}
By virtue of \eqref{ineq:DQ}, 
provided that $u_n(z_0)=\abs{Du}(z_0)$ is sufficiently large, we have at $z_0$
\eq{
\frac{u_{n}}{N_\ast r}
>\mfQ_{n}+\frac{u_{n}}{2N_\ast r}
>\frac{u_{n}}{4N_\ast r},
}
from which we infer (recall that $\varphi'=\frac1{2M_\ast}>0$)
\eq{\label{eq:varphi_n-psi_n}
\left(\frac{\varphi_{n}}{\varphi}+\frac{\psi^\ast_{n}}{\psi^\ast}\right)^{2}
=\left(\frac{\varphi'}{\varphi}u_{n}+\frac{2}{(\psi^\ast)^\frac12}\left(\mfQ_{n}+\frac{u_{n}}{2N_\ast r}\right)\right)^{2}
\ge\left(\frac{\varphi'}{\varphi}+\frac{1}{2N_\ast r(\psi^\ast)^\frac12}\right)^{2}u_{n}^{2}.
}
As in \eqref{eq:u_nn} we have
\eq{\label{unn}
u_{nn}
&=\frac{\cos^2\theta}{A^{2}}\sum_{i=1}^{n-1}b_{i}^{2}u_{ii}-\frac{v\log v}{A}\left(\frac{\varphi_{n}}{\varphi}+\frac{\psi^\ast_{n}}{\psi^\ast}\right)+\frac{\abs{\cos\theta}v\log v\sum_{k=1}^{n-1}b_{k}\psi^\ast_{k}}{A^{2}\psi^\ast},
}
where $A=\frac{u_n}W+\abs{\cos\theta}b_n$ satisfies the estimate \eqref{defn:A} when $G^*(z_0)$ is sufficiently large.
We assume that
%for $\varepsilon_3=\frac18$,
there holds
\eq{\label{extraterm}
%\varepsilon_3
\frac18\left(\frac{\varphi_n}\varphi+\frac{\psi_n^\ast}{\psi^\ast}\right)
%>C(n,\theta)\frac{\abs{D\psi^\ast}}{\psi^\ast}
\geq\frac{\abs{\cos\theta}\sum_{k=1}^{n-1}\abs{b_k\psi_k^\ast}}{A\psi^\ast},
}
otherwise the proof is finished for the same reason as \eqref{ineq:sup-v-ep_3}.
Taking \eqref{eq:varphi_n-psi_n}, \eqref{unn}, and \eqref{extraterm} into account, we thus arrive at (compared to \eqref{eq:lower bound og unn2})
\eq{\label{eq:lower bound og unn2-1}
u_{nn}^{2}
&\ge\frac{3}{4A^{2}}u_{n}^{2}(v\log v)^{2}\left(\frac{\varphi'}{\varphi}+\frac{1}{2N_\ast r(\psi^\ast)^{\frac12}}\right)^{2}-C(\theta)\sum_{i=1}^{n-1}u_{ii}^{2}.
}

\noindent{\bf Step 3.2. We estimate the last two terms appearing in \eqref{ineq:maximun-principle}.}

Observe that from \eqref{eq:a^ij-u_ijk} to \eqref{eq:coefficients of uii^2}, the computations concern only the function $u$ and its derivatives, and have nothing to do with the cut-off function $\psi$ therein, which means these computations are still valid in this case.
Let us set
\eq{
\msP
\coloneqq&a^{ij}\left(\frac{v_{ij}}{v\log v}-\frac{(1+\log v)v_{i}v_{j}}{v^{2}\log v^{2}}\right).
%+\frac1{v\log v}{\left(\frac{u_n}WH_n+\abs{\cos\theta}\sum_{k=1}^nb_kH_k\right)}.
}
Then by \eqref{eq:J expression} and \eqref{eq:J}, with same notations $\mcC_{nn},\mcC_{ni}, \mcC_{ii}$ as in the proof of Theorem \ref{Thm:gradient-estimate},  we obtain (recall that $H\equiv0$)
\eq{\label{ineq:P-1}
\msP
\geq\underbrace{\frac{\mcC_{nn}u_{nn}^{2}+\sum_{i=1}^{n-1}\mcC_{ni}u_{ni}^{2}+\sum_{i=1}^{n-1}\mcC_{ii}u_{ii}^{2}}{v\log v}}_{\coloneqq\msP_0}.
%{+\frac{\left(\frac{u_n}W\mcT_n+\abs{\cos\theta}\sum_{k=1}^nb_k\msH_k\right)}{v\log v}}.
%+\frac1{v\log v}{\left(\frac{u_n}WH_n+\abs{\cos\theta}\sum_{k=1}^nb_kH_k\right)}.
}

Upon relabeling the index of $\{1,\ldots,n-1\}$, we assume that $b_1^2\geq b_2^2\geq\ldots\geq b_{n-1}^2$, where $b_i$ are coefficients appearing in \eqref{eq:v-z_0}, and satisfy $\sum_{i=1}^{n-1}b_i^2=1-b_n^2$.

Now we break $\msP_0$ into two terms (recall that $u_1(z_0)=0$):
\eq{
\msP_0
&=\msP_0+\frac{a^{11}v_{1}^{2}(1+\log v)}{(v\log v)^{2}}-\frac{a^{11}v_{1}^{2}(1+\log v)}{(v\log v)^{2}}\\
&\overset{\eqref{newauxiliaryfirstderivative}}{=}\underbrace{\msP_0+\frac{a^{11}v_{1}^{2}(1+\log v)}{(v\log v)^{2}}}_{\coloneqq\msP_1}-W^{2}(1+\log v)\left(\frac{\psi^\ast_{1}}{\psi^\ast}\right)^{2}.
%& =:P_{1}-W^{2}(1+\log v)(\frac{\psi_{1}}{\psi})^{2}
}

The term $\frac{a^{11}v_{1}^{2}(1+\log v)}{v\log v}$ in $ \msP_1$ can be simply estimated by virtue of \eqref{eq:a^ij-z_0} and \eqref{eq:v_i} as follows:
\eq{
\frac{a^{11}v_{1}^{2}(1+\log v)}{(v\log v)^{2}}
=&\frac{W^{2}(1+\log v)}{(v\log v)^{2}}\left(\abs{\cos\theta} u_{11}b_{1}+Au_{n1}\right)^{2}\\
%= & \frac{W^{2}(1+\log v)}{(v\log v)^{2}}\left(\beta^{2}u_{11}^{2}b_{1}^{2}+a^{2}u_{n1}^{2}+2a\beta u_{11}b_{1}u_{1n}\right)
\ge&\frac{W^{2}(1+\log v)}{(v\log v)^{2}}\left(\frac{1}{2}\abs{\cos\theta}^{2}u_{11}^{2}b_{1}^{2}-A^{2}u_{n1}^{2}\right).
}
%where we have used the elementary inequality $(a+b)^2\geq\frac{a^2}2-b^2$.
%\footnote{The coefficients can also be modified and the range of $|\cos\theta|$ can be improved.}
This in turn gives
\eq{\label{ineq:P_1-1}
%------1
\msP_{1}
\overset{\eqref{ineq:P-1}}{\ge}&\frac{\mcC_{nn}u_{nn}^{2}+\sum_{i=1}^{n-1}\mcC_{ni}u_{ni}^{2}+\sum_{i=1}^{n-1}\mcC_{ii}u_{ii}^{2}}{v\log v}+\frac{W^{2}(1+\log v)}{(v\log v)^{2}}\left(\frac{1}{2}\abs{\cos\theta}^{2}u_{11}^{2}b_{1}^{2}-A^{2}u_{n1}^{2}\right)\\
%-----2
=&\frac{1}{v\log v}\bigg(\mcC_{nn}u_{nn}^{2}+\sum_{i=2}^{n-1}\mcC_{ni}u_{ni}^{2}+\sum_{i=2}^{n-1}\mcC_{ii}u_{ii}^{2}\\
%-----3
&+\left(\mcC_{n1}-A^{2}\frac{W^{2}(1+\log v)}{v\log v}\right)u_{n1}^{2}+\left(\mcC_{11}+\frac{1}{2}\abs{\cos\theta}^{2}b_{1}^{2}\frac{W^{2}(1+\log v)}{v\log v}\right)u_{11}^{2}\bigg).
}

%------
\noindent{\bf Step 3.3. We further estimate the coefficients appearing in \eqref{ineq:P_1-1}.}

By virtue of the {\bf Claim} shown in {\bf Step 3} of the proof of Theorem \ref{Thm:gradient-estimate}, we only need to consider the scenario that $\mcC_{11}\leq0$ and $\mcC_{ii}
%>\mathbf{C}_i
\geq O\left(\frac{W^2}v\right)>0$ for $i=2,\ldots,n-1$, otherwise the proof is finished for a similar reason as the discussion subsequent to the {\bf Claim}.

Now we take a closer look at \eqref{ineq:P_1-1}.
Recall the definition of $\mcC_{11}$, the coefficients of $u_{11}^{2}$ can be further estimated by:
\eq{\label{ineq:C_11-non-positive}
%-------1
&\mcC_{11}+\frac{1}{2}\abs{\cos\theta}^{2}b_{1}^{2}\frac{W^{2}(1+\log v)}{v\log v}\\
%-------2
%\overset{\eqref{eq:coefficients of uii^2}}{\ge} & W-\frac{(1+\log v)}{(v\log v)}\abs{\cos\theta}^{2}W^{2}b_1^{2}-C(n)\frac{W}{\log v}+\frac{1}{2}\abs{\cos\theta}^{2}b_{1}^{2}\frac{W^{2}(1+\log v)}{v\log v}\\
%-------3
%\ge&W-\frac{(1+\log v)}{2(v\log v)}\abs{\cos\theta}^{2}W^{2}b_1^{2}-\frac{W}{\log v}
%=\frac{2Wv\log v-(1+\log v)\abs{\cos\theta}^{2}W^{2}b_1^{2}}{2(v\log v)}-C\frac{W}{\log v}\\
%------4
\overset{\eqref{eq:coefficients of uii^2}}{=}&\frac{2W(W+\abs{\cos\theta}u_nb_n)-\abs{\cos\theta}^{2}W^{2}b_1^{2}}{2v}-O\left(\frac{W}{\log v}\right)\\
%= & W^{2}\frac{2(1+\beta b_{n}\frac{u_{n}}{W})-b_{1}^{2}\beta^{2}}{2v}-C\frac{W}{\log v}\\
\ge\,& W^{2}\frac{1-\abs{\cos\theta}^{2}+(\abs{\cos\theta}b_{n}+1)^{2}}{2v}-O\left(\frac{W}{\log v}\right).
}
For the coefficient of $u_{n1}^{2}$,  recall the definition of $\mcC_{n1}$ in \eqref{eq:J},  we compute
\eq{
%------1
&\mcC_{n1}-A^{2}\frac{W^{2}(1+\log v)}{v\log v}\\
%------2
{=}&\frac{2u_{n}^{2}}{W}+2\abs{\cos\theta}u_{n}b_{n}+\frac{2}{W}-\frac{(1+\log v)}{v\log v}\left(u_{n}+\abs{\cos\theta}Wb_n\right)^{2}-C\frac{W}{\log v}-\frac{A^{2}W^{2}(1+\log v)}{v\log v}\\
%------3
\overset{\eqref{eq:AW-v}}{=}&\frac{2u_{n}^{2}}{W}+2\abs{\cos\theta}u_{n}b_{n}+\frac{2}{W}-\frac{2(1+\log v)}{v\log v}\left(u_{n}+\abs{\cos\theta}Wb_n\right)^{2}-O\left(\frac{W}{\log v}\right)\\
%------4
%=&\frac{2u_{n}^{2}}{W}+2\abs{\cos\theta}u_{n}b_{n}+\frac{2}{W}-\frac{2}{v}\left(u_{n}+\abs{\cos\theta}Wb_n\right)^{2}-O\left(\frac{W}{\log v}\right)\\
%------5
\overset{\eqref{eq:v-z_0}}{=}&2\frac{\left(u_{n}^{2}+\abs{\cos\theta}u_nb_nW\right)(W+\abs{\cos\theta}u_nb_n)
%+\abs{\cos\theta}u_{n}b_{n}W(W+\abs{\cos\theta}u_nb_n)
-W(u_{n}+\abs{\cos\theta}Wb_n)^{2}}{Wv}-O\left(\frac{W}{\log v}\right)\\
%=&\frac{2\abs{\cos\theta}b_nu_n^3+2\abs{\cos\theta}^2u_n^2b_n^2W-2\abs{\cos\theta}^2b_n^2W^3-2\abs{\cos\theta}b_nu_nW^2}{Wv}-O\left(\frac{W}{\log v}\right)\\
%-------6
=&\frac{-2\abs{\cos\theta}u_nb_n-2\abs{\cos\theta}^2Wb_n^2}{Wv}-O\left(\frac{W}{\log v}\right),
}
where we have used $u_n^2-W^2=-1$ in the last equality.
Recall \eqref{eq:u_ni}, by Cauchy inequality we thus find
\eq{\label{ineq:C_n1-u_n1^2-non-positive}
%-----1
&\left(\mcC_{n1}-A^{2}\frac{W^{2}(1+\log v)}{v\log v}\right)\frac{u_{n1}^{2}}{v\log v}\\
%-----2
=&\left(\frac{-2\abs{\cos\theta}u_nb_n-2\abs{\cos\theta}^2Wb_n^2}{Wv^{2}\log v}-C\frac{W}{v(\log v)^{2}}\right)\left(-\frac{\abs{\cos\theta}}{A}b_{1}u_{11}-\frac{\psi^\ast_{1}}{A\psi^\ast}v\log v\right)^{2}\\
%-----3
\ge&-C\frac{1}{(\log v)^{2}}\left(\frac{\abs{\cos\theta}^{2}}{A^{2}}b_{1}^{2}u_{11}^{2}+\frac{1}{A^{2}}\left(\frac{\psi^\ast_{1}}{\psi^\ast}\right)^{2}v^{2}(\log v)^{2}\right).
}
%where we have simply used $(a+b)^2\leq2a^2+2b^2$ in the inequality.
Going back to \eqref{ineq:P_1-1}, we can now use \eqref{ineq:C_11-non-positive} and \eqref{ineq:C_n1-u_n1^2-non-positive} to deduce
\eq{\label{ineq:P_1}
%----1
\msP_{1}
\ge&\frac{1}{v\log v}\left(\mcC_{nn}u_{nn}^{2}+\sum_{i=2}^{n-1}\mcC_{ni}u_{ni}^{2}+\sum_{i=2}^{n-1}\mcC_{ii}u_{ii}^{2}\right)\\
%----2
&+\frac{1}{v\log v}\left(W^{2}\frac{\sin^{2}\theta+(\abs{\cos\theta}b_n+1)^{2}}{2v}-C\frac{W}{\log v}\right)u_{11}^{2}\\
%----3
&-\frac{C}{(\log v)^{2}}\left(\frac{\abs{\cos\theta}^{2}}{A^{2}}b_{1}^{2}u_{11}^{2}+\frac{1}{A^{2}}\left(\frac{\psi^\ast_{1}}{\psi^\ast}\right)^{2}v^{2}(\log v)^{2}\right)\\
%---4
\ge&\frac{1}{v\log v}\left(\mcC_{nn}u_{nn}^{2}+\sum_{i=2}^{n-1}\mcC_{ni}u_{ni}^{2}+\sum_{i=2}^{n-1}\mcC_{ii}u_{ii}^{2}\right)
+\frac{1}{2v^2\log v}W^{2}{\sin^{2}\theta}u_{11}^{2}
-\frac{C}{A^{2}}\left(\frac{\psi^\ast_{1}}{\psi^\ast}\right)^{2}v^{2},
}
where the last inequality holds because for sufficiently large $\abs{Du(z_0)}$, there holds
\eq{
\left(\frac{W^2(\abs{\cos\theta}b_n+1)^2}{2v^2\log v}
-C\frac{W}{v(\log v)^2}-C\frac{\abs{\cos\theta}^2b_1^2}{A^2(\log v)^2}\right)u_{11}^2
\geq0.
}

\noindent{\bf Step 4.
We complete the proof by using the interior maximality \eqref{ineq:maximun-principle}.
}

Let us first collect up the estimates resulting from {\bf Step 3}.
Using \eqref{ineq:P_1}, \eqref{eq:lower bound og unn2-1}  and  the fact that $\mcC_{ni}\geq0$ (recall \eqref{eq:coefficients of uni^2}), we get
\eq{
%-----1
\msP_0
=&\msP_{1}-W^{2}(1+\log v)\left(\frac{\psi^\ast_{1}}{\psi^\ast}\right)^{2}\\
%-----2
\ge&\frac{1}{v\log v}\left(\mcC_{nn}u_{nn}^{2}+\sum_{i=2}^{n-1}\mcC_{ni}u_{ni}^{2}+\sum_{i=2}^{n-1}\mcC_{ii}u_{ii}^{2}\right)
+\frac{\sin^2\theta}{2v^2\log v}W^{2}u_{11}^{2}-2W^{2}(1+\log v)\left(\frac{\psi^\ast_{1}}{\psi^\ast}\right)^{2}\\
%------3
%\ge & \frac{C_{nn}u_{nn}^{2}}{v\log v}-W^{2}(1+\log v)(\frac{\psi_{1}}{\psi})^{2}\\
\overset{\eqref{eq:lower bound og unn2-1}}{\ge}&\frac{\mcC_{nn}}{v\log v}\left(\frac{3}{4A^{2}}u_{n}^{2}(v\log v)^{2}\left(\frac{\varphi'}{\varphi}+\frac{1}{2N_\ast r(\psi^\ast)^{\frac12}}\right)^{2}-C(\theta)\sum_{i=1}^{n-1}u_{ii}^{2}\right)\\
&{+\frac{\sin^2\theta}{2v^2\log v}W^{2}u_{11}^{2}+\frac{1}{v\log v}\sum_{i=2}^{n-1}\mcC_{ii}u_{ii}^{2}}
-2W^{2}(1+\log v)\left(\frac{\psi^\ast_{1}}{\psi^\ast}\right)^{2}.
}
Because $\mcC_{nn}$ resulting from \eqref{eq:coefficients of unn^2} is of order $O\left(\frac1{W}\right)$,
collecting all the terms involving $u_{11}^2$ in the last inequality, we easily see that for sufficiently large $\abs{Du(z_0)}$,
\eq{
\left(\frac{\sin^2\theta}{2v^2\log v}W^{2}-C(\theta)\frac{\mcC_{nn}}{v\log v}\right)u_{11}^2\geq0.
}
On the other hand, by virtue of the {\bf Claim},
$\mcC_{ii}$ are positive and of order $O(\frac{W^2}v)$ for $i\in\{2,\ldots,n-1\}$, it follows that
\eq{
\sum_{i=2}^{n-1}\left(\frac{\mcC_{ii}}{v\log v}-C(\theta)\frac{\mcC_{nn}}{v\log v}\right)u_{ii}^2
\geq0.
}
For sufficiently large $|Du|$, we thus find
\eq{\label{positive lower bound of P0}
%----1
\msP_0
\ge&\frac{\mcC_{nn}}{v\log v}\frac{3}{4A^{2}}u_{n}^{2}(v\log v)^{2}\left(\frac{\varphi'}{\varphi}+\frac{1}{2N_\ast r(\psi^\ast)^{\frac12}}\right)^{2}-2W^{2}(1+\log v)\left(\frac{\psi^\ast_{1}}{\psi^\ast}\right)^{2}\\
%------
%\overset{\eqref{eq:coefficients of unn^2}}{\geq}&\frac{\left(u_{n}+|\cos\theta|Wb_{n}\right)u_{n}}{W^3}\frac3{4A^{2}}u_{n}^{2}(v\log v)\left(\frac{\varphi'}{\varphi}+\frac{1}{2N_\ast r(\psi^\ast)^{\frac12}}\right)^{2}\\
%&{-C(\theta)u_n^2\left(\frac{\varphi'}{\varphi}+\frac{1}{2N_\ast r\psi^{\frac12}}\right)^{2}}-CW^{2}(1+\log v)\left(\frac{\psi^\ast_{1}}{\psi^\ast}\right)^{2}\\
%------2
\overset{\eqref{eq:psi_i^ast}}{\geq}&\frac14u_n^2\log v\left(\frac{\varphi'}{\varphi}+\frac{1}{2N_\ast r(\psi^\ast)^{\frac12}}\right)^{2}-8W^{2}(1+\log v)\frac{\abs{D\mfQ}^2}{\psi^\ast}\\
%------3
\overset{\eqref{ineq:DQ}}{\geq}&\frac14u_n^2\log v\left(\left(\frac{\varphi'}{\varphi}\right)^{2}+\frac{1}{4N_\ast^{2}r^{2}\psi^\ast}+\frac{\varphi'}{\varphi}\frac{1}{N_\ast r(\psi^\ast)^{\frac12}}\right)-9W^{2}\log v\frac{\left( 2+\frac{C_+}{2N_\ast}\right)^2}{r^{2}\psi^\ast}\\
%------4
\geq&\frac14u_n^2\log v\left(\frac{\varphi'}{\varphi}\right)^{2}+\frac14u_n^2\log v\frac{\varphi'}{\varphi}\frac{1}{N_\ast r(\psi^\ast)^{\frac12}},
}
where to derive the last inequality, we have used the trivial fact that $C_+=\frac1{36}\frac {\abs{\cos\theta}(1-\sin\theta)}{(1+\frac{|\cos\theta |}{\sin\theta})}\leq\frac2{36}=2N_\ast$, so that the last term on the third inequality $\geq-W^2\log v\frac{9^2}{r^{2}\psi^\ast}$, canceling with the term $\frac14u^2_n\log v\frac1{4N^2_\ast r^2\psi^\ast}$.

We are now ready to finish the proof.
Back to \eqref{ineq:maximun-principle}, we can now estimate
\eq{
%-----1
0
%\ge& a^{ij}\big(\frac{\varphi_{ij}}{\varphi}-\frac{\varphi_{i}}{\varphi}\frac{\varphi_{j}}{\varphi}+\frac{\psi_{ij}}{\psi}-\frac{\psi_{i}}{\psi}\frac{\psi_{j}}{\psi}+\frac{v_{ij}}{v\log v}-\frac{(1+\log v)v_{i}v_{j}}{v^{2}\log v^{2}}\big)\\
\geq&a^{ij}\left(\frac{\varphi_{ij}}{\varphi}-\frac{\varphi_{i}}{\varphi}\frac{\varphi_{j}}{\varphi}+\frac{\psi^\ast_{ij}}{\psi^\ast}-\frac{\psi^\ast_{i}}{\psi^\ast}\frac{\psi^\ast_{j}}{\psi^\ast}\right)+\msP\\
%------2
\overset{\eqref{eq:a^ij-varphi^ij}}{\geq}&{
%\frac{\varphi'}{\varphi}HW^3
-u_n^2\left(\frac{\varphi'}{\varphi}\right)^2}+a^{ij}\left(\frac{\psi_{ij}^\ast}{\psi^\ast}-\frac{\psi^\ast_{i}}{\psi^\ast}\frac{\psi^\ast_{j}}{\psi^\ast}\right)
%C\frac{1}{\psi^\ast r^{2}}
%-CW^{2}\frac{1}{\psi r^{2}}
+\msP,
}
where by direct computation
\eq{
\psi_i^\ast
=&2(\psi^\ast)^\frac12\left(\mfQ_i+\frac{u_i}{2N_\ast r}\right),\\
\psi^\ast_{ij}
=&(\psi^\ast)^{-\frac12}\psi^\ast_j\left(\mfQ_i+\frac{u_i}{2N_\ast r}\right)+2(\psi^\ast)^\frac12\left(\mfQ_{ij}+\frac{u_{ij}}{2N_\ast r}\right)
=\frac{\psi^\ast_i\psi^\ast_j}{2\psi^\ast}+2(\psi^\ast)^\frac12\mfQ_{ij}+2(\psi^\ast)^\frac12\frac{u_{ij}}{2N_\ast r}.
%=&2\left(Q_i+\frac{u_i}{2N_\ast r}\right)\left(Q_j+\frac{u_j}{2N_\ast r}\right)+2(\psi^\ast)^\frac12\left(Q_{ij}+\frac{u_{ij}}{2N_\ast r}\right)
}
Recalling \eqref{eq:a^ij-z_0}, we have at $z_0$,
\eq{
a^{ij}\frac{\psi_i^\ast\psi_j^\ast}{(\psi^\ast)^2}
=W^2\sum_{i=1}^{n-1}\left(\frac{\psi^\ast_i}{\psi^\ast}\right)^2+\left(\frac{\psi^\ast_n}{\psi^\ast}\right)^2
=4W^2\frac{\mfQ_i^2}{\psi^\ast}
+4\frac{\mfQ_n^2}{\psi^\ast}+4\frac{\mfQ_nu_n}{N_\ast\psi^\ast r}+u_n^2\frac1{N_\ast^2\psi^\ast r^2},
}
and hence by \eqref{eq:MSE} and \eqref{ineq:DQ}, together with the fact that $-\frac1{(\psi^\ast(z_0))^\frac12}\geq -\frac1{\psi^\ast(z_0)}$ for $
0< \psi^*\le 1$, we find
\eq{
a^{ij}\left(\frac{\psi^\ast_{ij}}{\psi^\ast}-\frac{\psi_i^\ast\psi_j^\ast}{(\psi^\ast)^2}\right)
\geq&-CW^2\frac1{\psi^\ast r^2}
%-C\frac{\mathbf{C}_H}{(\psi^\ast)^\frac12r}
\geq&-CW^2\frac1{\psi^\ast r^2}.
}
Combing with \eqref{ineq:P-1} and \eqref{positive lower bound of P0}, this  shows that
\eq{\label{ineq:psi^ast-logv-conclusion}
%------1
0
\geq&%{\frac{\varphi'}{\varphi}HW^3}
-u^2_n\left(\frac{\varphi'}{\varphi}\right)^2
-CW^2\frac1{\psi^\ast r^2}+\msP\\
%------2
{\geq}&\frac14u_n^2\log v\left(\frac{\varphi'}{\varphi}\right)^{2}+\underbrace{\frac14u_n^2\log v\frac{\varphi'}{\varphi}\frac{1}{N_\ast r(\psi^\ast)^{\frac12}}}_{\geq0}
%+{\frac{\varphi'}{\varphi}HW^3}{+\frac{\left(\frac{u_n}W\mcT_n+\abs{\cos\theta}\sum_{k=1}^nb_k\msH_k\right)}{v\log v}}\\
-u^2_n\left(\frac{\varphi'}{\varphi}\right)^2
-CW^2\frac1{\psi^\ast r^2}\\
%-----
%\overset{\eqref{ineq:W^3-mcT_n-msH_k}}{\geq}&\frac14u_n^2\log v\left(\frac{\varphi'}{\varphi}\right)^{2}-C\left(\frac{\varphi'}\varphi\right)^2-u^2_n\left(\frac{\varphi'}{\varphi}\right)^2-CW^2-CW^2\frac1{\psi^\ast r^2}-C\frac{1}{\psi^\ast r^2}\\
%-----4
\geq&\frac15u_n^2\log v\left(\frac{\varphi'}{\varphi}\right)^{2}-CW^2\frac1{\psi^\ast r^2},
}
which recovers an estimate of the form \eqref{ineq:final-esti-step3}, so that a similar argument as the end of {\bf Step 2} in the proof of Theorem \ref{Thm:gradient-estimate} will lead to the following estimate:
\eq{
\sup_{\overline{W_{r_\ast}}}\abs{Du}
\leq \frac{1}{1-|\cos\theta|}\exp\left(C_1+C_2\frac{M}{r}+C_3\frac{M^2}{r^2}\right),
}
where $C_1,C_2,C_3$ are positive constants depending only on $n,\theta$.
Combining this estimate with \eqref{esti:Thm3.3-bdry-maximum},
then letting $r\ra\infty$ (recall that $\lim_{r\ra\infty}W_{r_\ast}=\mathbb{R}^n_+$), we finally deduce \eqref{conclu:gradient-estimate-bounded-above-L}.
The proof is thus completed.
\end{proof}

%-----
\iffalse
\begin{remark}\label{Rem:sub-linear-growth}
\normalfont
We point out that the conclusion of Theorem \ref{Thm:gradient-esti-u<=0} still holds, if we replace the assumption $u\leq0$ on $\p\mathbb{R}^n_+$ by the assumption that $u\mid_{\p\mathbb{R}^n_+}$ has sub-linear growth, namely, for any $\ep>0$, there exists some $R=R(\ep)>0$ such that $\frac{\abs{u(x)}}{\abs{x'}}<\ep$ whenever $x=(x_1,x')\in\p\mathbb{R}^n_+$ with $\abs{x}>R$.

The proof is essentially the same as that of Theorem \ref{Thm:gradient-esti-u<=0}.
The only difference is in {\bf Step 2}, where we show that $z_0$ cannot lie on $\p W_r$.
Since $u$ satisfies the sub-linear growth on $\p\mathbb{R}^n_+$ instead of non-positivity,
a direct computation (of $\{Q^\ast>0\}\cap\p\mathbb{R}^n_+$, where $Q^\ast$ is defined in \eqref{defn:bar-psi}) then infers that we should now replace \eqref{esti:sin-theta-x'<r} by

\eq{
\frac{\abs{x'}}r<1+\ep'<\frac{1}{\sin \theta},\quad\forall x\in\p W_r\setminus\p_{rel}W_r,
}
for any fixed small $\ep'>0$, with sufficiently large $r$.
Nevertheless, in view of \eqref{eq:a^i1-psi_i^ast} (see also \eqref{eq:a^i1-psi_i}), this is still sufficient for us to show that $a^{i1}\frac{\psi_i^\ast}{\psi^\ast}>0$ on $\p W_r\setminus\p_{rel}W_r$, by which we can prove $z_0\notin\p W_r$. 
\end{remark}

\fi
%------

The case that $u$ is bounded from below by some linear function on $\mathbb{R}^n_+$ follows as a corollary of the above theorem.
\begin{corollary}\label{Thm:gradient-esti-u>=0}
Let
$\theta\in(0,\pi)$,
let $u$ be a $C^2$-solution of the mean curvature equation \eqref{eq:MSE}, such that its graph $\S$ is a  capillary minimal graph in the sense of Definition \ref{Defn:capillary-graph}.
Assume that %$\abs{H}+\abs{DH}\leq\mathbf{C}_H$ on $\mathbb{R}^n_+$ for some positive constant $\mathbf{C}_H$, and that
$u$ has linear growth on $\overline{\mathbb{R}^n_+}$, namely, $\abs{u(x)}\leq C_{0}(1+\abs{x})$ for some constant $C_0>0$.

There exists a positive constant $\widehat\Lambda=\widehat\Lambda(n,\theta,C_0)$ with the following property:
If $u$ is bounded from below by a linear function $L$ on $\mathbb{R}^n_+$, with $\abs{DL}\leq\frac1{36}\frac {\abs{\cos\theta}(1-\sin\theta)}{(1+\frac{|\cos\theta |}{\sin\theta})}\eqqcolon C_\theta$,
then
\eq{
\sup_{\overline{\mathbb{R}^n_+}}\abs{Du}
\leq\widehat\Lambda.
}
\end{corollary}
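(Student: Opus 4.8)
The plan is to deduce this from Theorem~\ref{Thm:gradient-esti-u<=0} via the reflection $u\mapsto -u$, which swaps the capillary angle $\theta$ with its supplement $\pi-\theta$ and converts a lower linear barrier into an upper one.

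First, by Remark~\ref{Rem:theta>pi/2} I would restrict to $\theta\in(0,\pi)\setminus\{\tfrac\pi2\}$. Set $\tilde u\coloneqq -u$ with corresponding graph $\widetilde\S$. Since the operator $a^{ij}u_{ij}=(W^2\delta_{ij}-u_iu_j)u_{ij}$ is odd under $u\mapsto-u$ (the coefficients $a^{ij}$ are even in $Du$, while $u_{ij}$ changes sign) and $H\equiv0$, the function $\tilde u$ again solves the minimal surface equation on $\mathbb{R}^n_+$. On $\p\mathbb{R}^n_+$ one has $|D\tilde u|=|Du|$, hence the same $W$, and
\eq{
\tilde u_1=-u_1=\cos\theta\,\sqrt{1+|D\tilde u|^2}=-\cos(\pi-\theta)\sqrt{1+|D\tilde u|^2},
}
so $\widetilde\S$ is a capillary minimal graph in the sense of Definition~\ref{Defn:capillary-graph} with capillary angle $\pi-\theta$.

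Next I would check the remaining hypotheses: $\tilde u$ has the same linear growth $|\tilde u(x)|=|u(x)|\le C_0(1+|x|)$; the assumption $u\ge L$ becomes $\tilde u\le -L$ on $\mathbb{R}^n_+$ with $-L$ linear and $|D(-L)|=|DL|\le C_\theta$; and since $C_\theta=\frac1{36}\frac{|\cos\theta|(1-\sin\theta)}{1+\frac{|\cos\theta|}{\sin\theta}}$ depends on $\theta$ only through $|\cos\theta|$ and $\sin\theta$, which are invariant under $\theta\mapsto\pi-\theta$, we also have $|D(-L)|\le C_{\pi-\theta}$. Applying Theorem~\ref{Thm:gradient-esti-u<=0} to $\tilde u$ with capillary angle $\pi-\theta$ then yields $\sup_{\overline{\mathbb{R}^n_+}}|D\tilde u|\le\widehat\Lambda(n,\pi-\theta,C_0)$; since $|Du|=|D\tilde u|$ pointwise and, inspecting the proof of Theorem~\ref{Thm:gradient-esti-u<=0}, the constant $\widehat\Lambda$ depends on $\theta$ only through $|\cos\theta|$ and $\sin\theta$, this gives the claimed bound $\sup_{\overline{\mathbb{R}^n_+}}|Du|\le\widehat\Lambda(n,\theta,C_0)$. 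There is no real analytic obstacle here; the only point to keep track of is the $\theta\leftrightarrow\pi-\theta$ symmetry of all constants produced by Theorem~\ref{Thm:gradient-esti-u<=0}, which is transparent from their explicit dependence on $|\cos\theta|$ and $\sin\theta$.
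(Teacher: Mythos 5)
Your proposal is correct and coincides with the paper's own argument: the corollary is proved there exactly by applying Theorem \ref{Thm:gradient-esti-u<=0} to $-u$, whose graph is a capillary minimal graph with angle $\pi-\theta$ and which is bounded above by the linear function $-L$. Your additional verification that $C_\theta$ (and the resulting constants) are invariant under $\theta\mapsto\pi-\theta$, being functions of $\abs{\cos\theta}$ and $\sin\theta$ only, is a correct and welcome detail that the paper leaves implicit.
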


\begin{proof}
Consider the function $-u$, which is by assumption bounded from above by a linear function $-L$ on $\mathbb{R}^n_+$, with $\abs{D(-L)}\leq C_\theta$.
Moreover, the graph of $-u$ is a capillary minimal graph in the sense of Definition \ref{Defn:capillary-graph} with capillary angle $\pi-\theta$.

Applying Theorem \ref{Thm:gradient-esti-u<=0} we then obtain the required estimate.
\end{proof}

\section{Global gradient estimates for one-sided bounded solution}\label{Sec-4}
Our goal of this section is to obtain Theorem \ref{Thm:intro-gradient-sign},
we start with the following key lemma.
\begin{lemma}\label{Lem:gradient-estimate-bdry-u>0}
Let
$\theta\in(0,\pi)$,
let $u$ be a $C^2$-solution of the mean curvature equation \eqref{eq:MSE}, such that its graph $\S$ is a  capillary minimal graph in the sense of Definition \ref{Defn:capillary-graph}.
There exists a positive constant $\tilde\Lambda=\tilde\Lambda(n,\theta)$ with the following property:
If $u$ is a negative function on $\mathbb{R}^n_+$, then
\eq{
\sup_{\p\mathbb{R}^n_+}\abs{Du}
\leq\tilde\Lambda.
}
\end{lemma}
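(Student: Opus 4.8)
The plan is to run the maximum-principle scheme of Theorem~\ref{Thm:gradient-esti-u<=0}, but \emph{localized at a boundary point}. Fix $\theta\in(0,\pi)\setminus\{\tfrac\pi2\}$ and $p=(0,p')\in\partial\mathbb{R}^n_+$; we may assume $\abs{Du(p)}>0$, otherwise there is nothing to prove. Set $N_\ast=\tfrac1{36}$ and, for $r$ large (depending on $\theta$ and $u(p)$),
\eq{
Q_p(x)=1-\frac{(x_1-\abs{\cos\theta}r)^2+\sin^2\theta\abs{x'-p'}^2}{r^2},\quad
\psi^\ast=\left(Q_p+\frac{u}{2N_\ast r}\right)^2,\quad
\varphi(s)=\frac{s}{u(p)+r}+1,
}
and consider $G^\ast=\varphi(u)\psi^\ast\log v$ on $\overline{W_r}$, where $W_r=\{x_1>0:Q_p+\tfrac{u}{2N_\ast r}>0\}$. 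Since $u<0$ on $\mathbb{R}^n_+$, $W_r$ is a bounded subset of the (shifted) ellipsoid $E_{p,r}$ of Lemma~\ref{Lem:cut-off}, $\psi^\ast$ vanishes on $\partial_{rel}W_r$, and on $W_r$ one has $\abs{u}<2N_\ast r$, so that $\varphi(u)$ and $\psi^\ast$ stay in $(c,1]$ for a universal $c>0$. Taking $r$ large enough one also has $\psi^\ast(p)\geq\tfrac14\sin^4\theta$ and $\varphi(u(p))\geq\tfrac12$, hence $G^\ast(p)\geq c(\theta)\log v(p)$; so at the maximizer $z_0$ of $G^\ast$ we may assume $v(z_0)$ and $\abs{Du(z_0)}$ are as large as the computations require (otherwise $G^\ast(z_0)$ is bounded and we conclude as below), which in particular gives $z_0\notin\partial_{rel}W_r$.

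It then remains to treat $z_0\in\partial\mathbb{R}^n_+$ and $z_0\in W_r$. In the first case I would carry out the Hopf-type computation of \textbf{Step 2} of the proof of Theorem~\ref{Thm:gradient-esti-u<=0} with $L\equiv0$: the conormal inequality $0\geq a^{i1}(\log G^\ast)_i$ at $z_0$, combined with $\langle\nabla v,\mu\rangle=0$ on $\partial\mathbb{R}^n_+$ (Lemma~\ref{good_property}) which kills the $v$-term, the boundary identity $\abs{\cos\theta}\sqrt{1+\abs{\bar Du}^2}=\sin\theta\abs{u_1}$, and the fact that $Q_p$ is built to make $a^{i1}(Q_p)_i$ favourably signed as in \eqref{eq:a^i1-psi_i} (the additional term $\tfrac{u}{2N_\ast r}$ being absorbed since $\abs{DL}=0<C_\theta$), yields $\abs{Du(z_0)}\leq C(\theta)$, hence $G^\ast(z_0)\leq C(n,\theta)$. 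In the second case I would reproduce the interior analysis of \textbf{Step 3}--\textbf{Step 4} of the proof of Theorem~\ref{Thm:gradient-esti-u<=0} verbatim, with $\psi$ replaced by $\psi^\ast$: the \textbf{Claim} of \textbf{Step 3} of the proof of Theorem~\ref{Thm:gradient-estimate} disposes of all $u_{ii}^2$-terms with $i\geq2$, the term $u_{11}^2$ is split off via $u_{11}^2\lesssim\sum_{i\geq2}u_{ii}^2+(\log v)^2(\varphi'/\varphi)^2$ and controlled because differentiating $\tfrac{u}{2N_\ast r}$ in $\psi^\ast$ produces, at $z_0$ where $u_n=\abs{Du}>0$ is large, a strictly positive term of size $\sim\frac{1}{N_\ast r(\psi^\ast)^{1/2}}u_n^2\log v\,\tfrac{\varphi'}{\varphi}$, which also absorbs the error terms involving $D^2\psi^\ast$. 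One arrives at $0\geq\tfrac15 u_n^2\log v\,(\varphi'/\varphi)^2-CW^2\tfrac{1}{\psi^\ast r^2}$. Here is the only genuinely new input relative to Theorem~\ref{Thm:gradient-esti-u<=0}: $\varphi$ now carries the denominator $u(p)+r$ instead of $2(\sup_{W_r}\abs{u}+r)$, and since $u(p)<0$ we have $\varphi'/\varphi>\tfrac1r$, so dividing through gives $\psi^\ast(z_0)\log v(z_0)\leq\frac{C(n,\theta)}{r^2(\varphi'/\varphi)^2}\leq C(n,\theta)$, i.e.\ $G^\ast(z_0)\leq C(n,\theta)$ with no linear-growth assumption on $u$.

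In either case $G^\ast(z_0)\leq C(n,\theta)$, so $c(\theta)\log v(p)\leq G^\ast(p)\leq G^\ast(z_0)\leq C(n,\theta)$, whence $v(p)\leq C(n,\theta)$ and $\abs{Du(p)}\leq\frac{v(p)}{1-\abs{\cos\theta}}\leq\tilde\Lambda(n,\theta)$, independently of $p$ and $r$; taking the supremum over $p\in\partial\mathbb{R}^n_+$ proves the lemma. I expect the main obstacle to be the interior case: one must check that the delicate coefficient bookkeeping of Section~\ref{Sec-3} (the $\mathbf{C}_i$ and $\mcB_i$ estimates, together with all the absorptions valid only for $\abs{Du(z_0)}$ large) survives intact with $\psi^\ast$ in place of $\psi$, and --- crucially --- that the positivity generated by the $\tfrac{u}{2N_\ast r}$ term really does dominate the bad $u_{11}^2$-contribution coming from $\cos\theta u_1$ inside $v$ for \emph{every} $\theta\in(0,\pi)$, so that no angle restriction is reintroduced. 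A secondary check is that the Hopf argument on $\partial\mathbb{R}^n_+$ still closes with the modified cut-off; since most of these computations already appear in Section~\ref{Sec-3}, only the adjustments indicated above need to be spelled out.
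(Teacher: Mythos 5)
Your proposal is correct and follows essentially the same approach as the paper's own proof of Lemma \ref{Lem:gradient-estimate-bdry-u>0}: localize the auxiliary function $G^\ast=\varphi(u)\psi^\ast\log v$ at the boundary point $p$ with $M_\ast=u(p)+r$ in the definition of $\varphi$ (this is the change that removes the linear-growth hypothesis), run the Hopf argument at $\partial\mathbb{R}^n_+$ and the interior maximum-principle computation of Theorem \ref{Thm:gradient-esti-u<=0} with $L\equiv 0$, and read off $G^\ast(p)\le G^\ast(z_0)\le C(n,\theta)$ before sending $r\to\infty$. (Minor points: the paper actually uses $\varphi(s)=\tfrac{s}{2M_\ast}+1$ rather than $\tfrac{s}{M_\ast}+1$ in Section~4, and your claim that $\psi^\ast$ stays in $(c,1]$ on all of $W_r$ is not literally true since $\psi^\ast$ vanishes on $\partial_{rel}W_r$ — only $\psi^\ast(p)\ge c$ is needed and is what you in fact use — but neither affects the argument.)
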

\begin{proof}
Recalling Remark \ref{Rem:theta>pi/2},
in the following we only consider those $\theta\in(0,\pi)\setminus\{\frac\pi2\}$.

We modify the proof of Theorem \ref{Thm:gradient-esti-u<=0} and prove gradient estimate at any fixed $p=(0,p')\in\p\mathbb{R}^n$.
For any $r>0$ sufficiently large, we consider the function
\eq{
G^\ast(x)
=\varphi(u(x))\psi^\ast(x)\log v(x),
}
where $\psi^\ast(x)=\left(1-\frac{(x_{1}-\abs{\cos\theta}r)^{2}+\sin^{2}\theta\abs{x'-p'}^{2}}{r^{2}}+\frac{u(x)}{2N_\ast r}\right)^{2}$; $\varphi(s)=\frac{s}{2M_\ast}+1$, and $M_\ast=u(p)+r$ (which is positive whenever $r>-2u(p)$).

Without loss of generality, we  assume that $p=0$, so that $M_\ast=u(0)+r$, and $\psi^\ast$ agrees with \eqref{defn:psi^ast} ($L$ therein chosen as $0$). We can now follow the proof of Theorem \ref{Thm:gradient-esti-u<=0} almost line by line. 

As in the proof of Theorem \ref{Thm:gradient-esti-u<=0},
define $W_r$ by \eqref{defn:W_r} (in this case we do not have to define the set $W_{r_\ast}$), since $u<0$ we have $W_r\subset E_r$.
Obviously, $0\in W_r$ for large $r$, and we have 
\eq{
G^\ast(0)
=\left(1+\frac{u(0)}{2(u(0)+r)}\right)\left(\sin^2\theta+\frac{u(0)}{2N_\ast r}\right)^2\log v(0)>\frac{\sin^2\theta}8\log v(0)
}
for sufficiently large $r>\max\{-2u(0), \frac{-u(0)}{N_\ast\sin^2\theta}\}$.

Our goal is to prove the following estimate
\eq{\label{esti:Lem4.1-interior+bdry}
\left(1+\frac{u(0)}{2(u(0)+r)}\right)\left(\sin^2\theta+\frac{u(0)}{2N_\ast r}\right)^2\log v(0)
\leq C(n,\theta)\left(\frac{u(0)+r}r\right)^2+C(n,\theta).
}
To this aim, we assume
\eq{
\max_{\overline{W_r}}G^\ast
=G^\ast(z_0)>0.
}
Clearly $z_0\in\overline{W_r}\setminus\p_{rel}W_r$,
and we consider the following two cases:

{\bf Case 1. }$z_0\in\p W_r\setminus\p_{rel}W_r$.

Following the computations in Theorem \ref{Thm:gradient-esti-u<=0} {\bf Step 2}, we find at $z_0$ (recalling \eqref{ineq:max-bdry-u<=0-hopf-lemma})
\eq{
0
\geq\frac1{r(\psi^\ast)^\frac12}\left(2\abs{\cos\theta}(1-\abs{\sin\theta})\sqrt{1+\abs{\bar Du}^2}\abs{\bar Du}%-\frac{\abs{\cos\theta}W}{N_\star}
\right)-\frac1{2M_\ast\varphi}(\abs{\cos\theta}W).
}
Recalling \eqref{eq:bar-Du-u_1-bdry}, this implies that $\varphi(u(z_0))\abs{Du(z_0)}\leq C(\theta)\frac{r(\psi^\ast)^\frac12}{M_\ast}\leq C(\theta)\frac{r}{r+u(0)}\leq C(\theta)$ by our choice of $M_\ast$ and the fact that $0\leq\psi^\ast\leq1$ on $W_r$.
Therefore,
since $z_0$ is the maximum point, we have
\eq{
\left(1+\frac{u(0)}{2(u(0)+r)}\right)\left(\sin^2\theta+\frac{u(0)}{2N_\ast r}\right)^2\log v(0)
=G^\ast(0)
\leq G^\ast(z_0)
=\varphi(u(z_0))\psi^\ast(z_0)\log v(z_0)
\leq C(\theta).
}

{\bf Case 2. }$z_0\in W_r$.

Following the computations in Theorem \ref{Thm:gradient-esti-u<=0} {\bf Steps 3,4}, we arrive at $z_0$ (recalling \eqref{ineq:psi^ast-logv-conclusion})
\eq{
0\geq
\frac15u_n^2\log v\left(\frac{\varphi'}{\varphi}\right)^{2}
-C(n,\theta)W^2\frac1{\psi^\ast r^2}.
}
Therefore
\eq{
\psi^\ast(z_0)\log v(z_0)
\leq C(n,\theta)\left(\varphi\frac{M_\ast}r\right)^2
\leq C(n,\theta)\left(\frac{u(0)+r}r\right)^2.
}
This in turn implies
\eq{
\left(1+\frac{u(0)}{2(u(0)+r)}\right)\left(\sin^2\theta+\frac{u(0)}{2N_\ast r}\right)^2\log v(0)
=&G(0)
\leq G(z_0)\\
=&\left(1+\frac{u(z_0)}{2(u(0)+r)}\right)\psi^\ast(z_0)\log v(z_0)
\leq C(n,\theta)\left(\frac{u(0)+r}r\right)^2,
}
where we have used $u<0$ to derive the last inequality.

Combining {\bf Cases 1}, {\bf 2}, we obtain \eqref{esti:Lem4.1-interior+bdry}.
Letting $r\ra\infty$, we deduce as required that $\abs{Du(0)}\leq C(n,\theta)$.
This completes the proof.

\end{proof}

\begin{theorem}\label{Thm:u>0}
Let
$\theta\in(0,\pi)$,
let $u$ be a $C^2$-solution of the mean curvature equation \eqref{eq:MSE}, such that its graph $\S$ is a  capillary minimal graph in the sense of Definition \ref{Defn:capillary-graph}.
There exists a positive constant $\widetilde\Lambda=\widetilde\Lambda(n,\theta)$ with the following property:
If $u$ is a negative function on $\mathbb{R}^n_+$, then for any $p\in{\mathbb{R}^n_+}$, there holds
\eq{
\sup_{\overline{\mathbb{R}^n_+}}\abs{Du}
\leq\widetilde\Lambda.
}
\end{theorem}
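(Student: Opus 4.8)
The plan is to upgrade the boundary pointwise gradient bound of Lemma~\ref{Lem:gradient-estimate-bdry-u>0} to an \emph{interior} pointwise bound by re‑centering, underneath an arbitrary interior point, the ellipsoidal cut‑off construction used in Theorem~\ref{Thm:gradient-esti-u<=0}. Fix $x_0=(t,x_0')\in\mathbb{R}^n_+$ at which we wish to estimate $\abs{Du}$; by Lemma~\ref{Lem:gradient-estimate-bdry-u>0} we may assume $t>0$. For every large $r>0$ I would consider
\eq{
G^\ast(x)=\varphi(u(x))\psi^\ast(x)\log v(x),\quad
\psi^\ast(x)=\left(Q(x)+\frac{u(x)}{2N_\ast r}\right)^2,\quad
\varphi(s)=\frac{s}{2M_\ast}+1,
}
where $Q(x)=1-\frac{(x_1-\abs{\cos\theta}r)^2+\sin^2\theta\abs{x'-x_0'}^2}{r^2}$, $N_\ast=\frac1{36}$, and $M_\ast=-u(x_0)+r>0$. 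This is precisely the auxiliary function of Lemma~\ref{Lem:gradient-estimate-bdry-u>0} (equivalently of Theorem~\ref{Thm:gradient-esti-u<=0} with $L\equiv0$), but with the generating ellipsoid translated so that its flat face on $\p\mathbb{R}^n_+$ lies over $(0,x_0')$, and with $M_\ast$ keyed to $x_0$ instead of to $\sup\abs{u}$. Set $W_r=\{x_1>0:\psi^\ast>0\}$; since $u<0$ and $Q\leq1$ on $W_r$, the set $W_r$ is contained in a translated copy of $E_r$ (in particular in $\mathbb{R}^n_+$) and $\sup_{W_r}\abs{u}\leq2N_\ast r<M_\ast$, so $\varphi(u)\in[\frac12,\frac32]$ and $\varphi'=\frac1{2M_\ast}>0$ on $W_r$, exactly as demanded by that proof.

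Next I would check that $x_0$ is an admissible base point and that $G^\ast(x_0)$ does not degenerate as $r\to\infty$: a direct computation gives $Q(x_0)=1-(\tfrac{t}{r}-\abs{\cos\theta})^2\to\sin^2\theta$ and $\frac{u(x_0)}{2N_\ast r}\to0$, hence $x_0\in W_r$ and $\psi^\ast(x_0)>\frac12\sin^4\theta$ for $r$ large, while $\varphi(u(x_0))=1+\frac{u(x_0)}{2(r-u(x_0))}\to1$; thus $G^\ast(x_0)\geq\frac14\sin^4\theta\,\log v(x_0)$ for $r$ large (and if $\log v(x_0)\leq0$ there is nothing to prove). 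Let $z_0\in\overline{W_r}$ be a maximum point of $G^\ast$; we may assume $G^\ast(z_0)>0$, so $z_0\notin\p_{rel}W_r$. I would then run the two cases $z_0\in\p W_r\setminus\p_{rel}W_r$ and $z_0\in W_r$ \emph{verbatim} as in Lemma~\ref{Lem:gradient-estimate-bdry-u>0}: the Hopf‑type boundary computation (\textbf{Step~2} of Theorem~\ref{Thm:gradient-esti-u<=0}) and the interior computation (\textbf{Steps~3--4}) only involve $u$, $v$ and the shape of $\psi^\ast$, none of which has changed — indeed $L\equiv0$ makes the constant $C_+$ there equal to $0$ and only simplifies matters. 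These yield
\eq{
G^\ast(x_0)\leq G^\ast(z_0)\leq C(n,\theta)\left(\frac{M_\ast}{r}\right)^2+C(n,\theta)=C(n,\theta)\left(\frac{r-u(x_0)}{r}\right)^2+C(n,\theta).
}
Combining with the lower bound $G^\ast(x_0)\geq\frac14\sin^4\theta\log v(x_0)$ and letting $r\to\infty$ (so $(r-u(x_0))/r\to1$) gives $\frac14\sin^4\theta\,\log v(x_0)\leq2C(n,\theta)$, hence $v(x_0)\leq\exp\!\big(8C(n,\theta)\sin^{-4}\theta\big)$. Since $W\leq v/(1-\abs{\cos\theta})$ pointwise, $\abs{Du(x_0)}\leq\widetilde\Lambda(n,\theta)$; as $x_0$ was arbitrary and the boundary case is Lemma~\ref{Lem:gradient-estimate-bdry-u>0}, the theorem follows (the case $\theta=\frac\pi2$ reduces to the classical interior estimate, cf. Remark~\ref{Rem:theta>pi/2}).

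The main obstacle, and the only genuinely new point relative to Theorem~\ref{Thm:gradient-esti-u<=0}, is the passage $r\to\infty$ with \emph{no} linear‑growth hypothesis. In Theorem~\ref{Thm:gradient-esti-u<=0} linear growth was used precisely to keep $M_\ast/r$ bounded (there $M_\ast=\sup_{W_r}\abs{u}+r$, which could blow up) and to make the inner region exhaust $\mathbb{R}^n_+$. Here the hypothesis $u<0$ does both for free: it forces $M_\ast=r-u(x_0)>r\geq2N_\ast r\geq\sup_{W_r}\abs{u}$ (the last bound following from $\psi^\ast>0$ together with $Q\leq1$ and $u<0$), so $M_\ast/r\to1$ automatically, and it guarantees the translated ellipsoids lie in $\mathbb{R}^n_+$. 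One must only re‑verify that every estimate of Theorem~\ref{Thm:gradient-esti-u<=0} that invoked $M_\ast\geq\sup_{W_r}\abs{u}+r$ — chiefly $\varphi(u)\in[\frac12,\frac32]$ on $W_r$ and the final reabsorption producing $\psi^\ast(z_0)\log v(z_0)\lesssim(\varphi M_\ast/r)^2$ — survives the substitution $M_\ast=r-u(x_0)$, which is routine. Note there is no angle or dimension restriction, since the good term generated by $\frac{u}{2N_\ast r}$ in $\psi^\ast$ already absorbed the ``bad term'' of Theorem~\ref{Thm:gradient-estimate} inside the proof of Theorem~\ref{Thm:gradient-esti-u<=0}.
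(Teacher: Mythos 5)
Your argument is correct, but it follows a genuinely different route from the paper's. The paper's proof of Theorem~\ref{Thm:u>0} also centers a cut-off at the interior point $p$ and splits into the cases $z_0\in\p\mathbb{R}^n_+$ and $z_0$ interior, but it then makes a decisive simplification that you do not: for the interior case it \emph{abandons} the capillary area element $v$ and the ellipsoidal cut-off entirely, working instead with $G=\varphi(u)\psi\log W$ where $\psi=(1-\abs{x-p}^2/r^2)^2$ is the round-ball cut-off, equivalently running {\bf Step 2} of Theorem~\ref{Thm:gradient-estimate} with ``$\theta=\frac\pi2$'' (so $\cos\theta=0$, which automatically makes \eqref{ineq:C_i} positive, hence no angle or dimension restriction). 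When $z_0$ lands on $\p\mathbb{R}^n_+$ it simply invokes Lemma~\ref{Lem:gradient-estimate-bdry-u>0} as a black box to bound $\log W(z_0)$; when $z_0$ is interior it is the classical Korevaar-type interior estimate for the minimal surface equation, with no capillary term in sight. Your version instead keeps the full capillary apparatus — $\log v$, the ellipsoidal $\psi^\ast$, the $\frac{u}{2N_\ast r}$ correction — translated so the flat face lies under $(0,x_0')$, and re-runs the boundary Hopf step and the interior {\bf Steps 3--4} of Theorem~\ref{Thm:gradient-esti-u<=0} for that shifted cut-off. This works: the interior computations in {\bf Steps 3--4} are local and do not use that the base point sits on the boundary, and your verification that $\psi^\ast(x_0)\to\sin^4\theta$, $\varphi(u(x_0))\to1$, $\sup_{W_r}\abs u\leq 2N_\ast r$, and $M_\ast/r\to1$ (your $M_\ast=r-u(x_0)$ is a slightly different normalization than the paper's $M_\ast=u(p)+r$ in Lemma~\ref{Lem:gradient-estimate-bdry-u>0}, but either works) is exactly what is needed to pass $r\to\infty$ without linear growth. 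What the paper's route buys is that once the boundary gradient is controlled, the interior estimate is completely classical and requires no re-verification of the capillary-specific cancellations; what yours buys is a single uniform auxiliary function for both cases. One small misstatement: $L\equiv0$ does not ``make $C_+$ equal to $0$'' — $C_+$ is a fixed threshold depending only on $\theta$; what is true is that $\abs{DL}=0\leq C_+$ holds trivially. This does not affect the argument.
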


\begin{proof}[Proof of Theorem \ref{Thm:u>0}]
When we have the boundary estimate, Lemma \ref{Lem:gradient-estimate-bdry-u>0}, the Theorem follows from \cite[Theorem 1.4]{CMMR22}, which is a more general result. Since the proof therein is completely different from our context, for the convenience of the reader we 
provide a proof, which is a modification of the one of Theorem \ref{Thm:gradient-estimate}.
Fix an arbitrary $p\in\mathbb{R}^n_+$,
for any $r>-2u(p)>0$ sufficiently large,
we consider the function
\eq{
G(x)
=\varphi(u(x))\psi(x)\log W(x),
}
where $\varphi(s)=\frac{s}{2M}+1$ with $M=u(p)+r$; $\psi(x)$ is defined as in Lemma \ref{Lem:cut-off} but with $\theta$ therein chosen as $\frac\pi2$ and center chosen as $p$, namely, $\psi(x)=\left(1-\frac{\abs{x-p}^2}{r^2}\right)^2$; also recall that $W(x)=\sqrt{1+\abs{Du(x)}^2}$.

Put $D_r\coloneqq B_r(p)\cap\mathbb{R}^n_+$.
Note that $G(p)=\left(1+\frac{u(p)}{2(u(p)+r)}\right)\log W(p)$,
hence we assume
\eq{
\max_{\overline{D_r}}G
=G(z_0)>0.
}
Clearly, $z_0\in\overline{D_r}\setminus\p_{rel}D_r$, and we consider the following two cases:

{\bf Case 1. }$z_0\in\p\mathbb{R}^n_+$.

In this case, by virtue of Lemma \ref{Lem:gradient-estimate-bdry-u>0} we have
\eq{
%\left(1-\frac{u(x_0)}{2(u(x_0)+r)}\right)
\left(1+\frac{u(p)}{2(u(p)+r)}\right)\log W(p)
=G(p)\leq G(z_0)
=&\left(1+\frac{u(z_0)}{2(u(p)+r)}\right)\psi(z_0)\log W(z_0)
\leq C(n,\theta).
}

{\bf Case 2. }$z_0\in D_r$.

Following the computations in Theorem \ref{Thm:gradient-estimate} {\bf Step 2} (with $\theta$ therein chosen as $\frac\pi2$ thanks to our choice of $\psi$. In particular, the crucial estimate \eqref{ineq:C_i} holds), we arrive at (recalling \eqref{ineq:final-esti-step2}, \eqref{ineq:C_i})
\eq{
\varphi(z_1)\psi(z_1)\log W(z_1)
\leq C(n)\left(\frac{M}r\right)^2+C(n,\theta)
=C(n)\left(\frac{u(p)+r}r\right)^2+C(n,\theta).
}
Therefore, since $z_1$ is the maximum point, we have
\eq{
%-----1
\left(1+\frac{u(p)}{2(u(p)+r)}\right)\log W(p)
=G(p)
\leq&G(z_1)
=\left(1+\frac{u(z_1)}{2(u(p)+r)}\right)\psi(z_1)\log W(z_1)\\
%-----2
\leq& C(n)\left(\frac{u(p)+r}r\right)^2+C(n,\theta).
}
Combining {\bf Cases 1,2}, we obtain 
\eq{
\left(1+\frac{u(p)}{2(u(p)+r)}\right)\log W(p)\le C(n)\left(\frac{u(p)+r}r\right)^2+C(n,\theta).
}
Letting $r\ra\infty$, we obtain as required that $\abs{Du(p)}\leq C(n,\theta)$, and we complete the proof.
\end{proof}

\begin{proof}[Proof of Theorem \ref{Thm:intro-gradient-sign}]
To prove the theorem, note that up to plus or minus a constant from $u$, it suffices to prove gradient estimates for those $u$ which are either negative or positive on $\mathbb{R}^n_+$.

For the former case, we directly apply Theorem \ref{Thm:u>0}; for the later case, we apply Theorem \ref{Thm:u>0} to the function $-u$ (which is a negative function and its graph is a capillary minimal graph in the sense of Definition \ref{Defn:capillary-graph} with capillary angle $\pi-\theta$).
This finishes the proof.

\end{proof}

%=======
\section{Liouville-type theorems}\label{Sec-5}

\begin{proof}[Proof of Theorem \ref{Thm:Liouville}]

We first prove ($i$) and ($ii$).
Denote $u_R(x)=u(Rx)/R$, then $D u_R(x)=D u(Rx)$ with $\abs{u_R}(x)\le C_0$  and by Theorem \ref{Thm:gradient-estimate} $\abs{Du_R(x)}\le\Lambda$ in $B_1^+=B_1(0)\cap\{x_1>0\}$.
Moreover, $u_R(x)$ satisfies
\eq{
\div\left(\frac{D u_R}{\sqrt{1+\abs{D u_R}^2}}\right)
=0\quad\text{in}\quad B_1^+,
}
with $(u_R)_1=\cos\theta \sqrt{1+\abs{D u_R}^2}$ on $\p B_1^+\cap\p\mathbb{R}^n_+$.
Therefore, by standard estimate (see e.g., \cite[Section 10.2]{LU68}) we have $\abs{u_R}_{C^{1,\alpha}(B_{\frac12}^+)}\le C$, where $C$ is a positive constant independent of $R$.
In particular, this yields $\Abs{D u_R(x)-D u_R(0)}\le C\abs{x}^{\alpha}$ for any $x\in B_{\frac12}^+$ and thus, for any $y\in B_{R/2}^+$,
\eq{
\Abs{D u(y)-D u(0)}
\le C\frac{\abs{y}^{\alpha}}{R^{\alpha}}.
}
For any fixed $y$, letting $R\rightarrow \infty$ we obtain $\Abs{D u(y)-D u(0)}=0$, and thus $u$ is affine.

To prove ($iii$), note that by Theorem \ref{Thm:gradient-esti-u<=0} and Corollary \ref{Thm:gradient-esti-u>=0}, we again have $\abs{Du_R(x)}\leq\Lambda$ in $B_1^+$, so we conclude as above that $u$ is affine, which completes the proof.
\end{proof}
The proof of Theorem \ref{Thm:Liouville-positive} is essentially the same, thanks to Theorem \ref{Thm:intro-gradient-sign}.

%========

\appendix
%====
\section{A calibration argument for capillary minimal graphs %with capillary boundary in the half-space
}\label{App-1}

Let ${\rm vol}=\rd x_1\wedge\ldots\wedge \rd x_{n+1}$ be the canonical volume form of $\mathbb{R}^{n+1}$.
Given $\theta\in(0,\pi)$,
let $u$ be a smooth function on $\mathbb{R}^n_+$ and $\S$ be its corresponding graph, such that $\S$ is a capillary minimal graph in the sense of Definition \ref{Defn:capillary-graph}.

\begin{definition}[Capillary calibration]
\normalfont
Let $\nu_\theta\coloneqq\nu-\cos\theta e_1$ be a vector field defined on $\overline{\mathbb{R}^n_+}$, where $\nu$ is the upwards-pointing unit normal of $\S\subset\mathbb{R}^{n+1}$, defined by \eqref{eq:nu}.
We call $\nu_\theta$ the \textit{capillary normal} of $\S$ (with respect to the capillary angle $\theta$).
Extending $\nu, \nu_\theta$ to be defined on $\overline{\mathbb{R}^{n+1}_+}$ by simply letting
$\nu(x,x_{n+1})=\nu(x)$ and
$\nu_\theta(x,x_{n+1})=\nu_\theta(x)$ for any $x\in\overline{\mathbb{R}^n_+}$.
The $n$-form $\om_\theta$, defined by
\eq{
\om_\theta
=\iota_{\nu_\theta}{\rm vol},
}
is called \textit{capillary calibration}.
\end{definition}

\begin{lemma}\label{Lem:capillary-calibration}
The capillary calibration
$\om_\theta$ satisfies
\begin{enumerate}
%------i
    \item [($i$)] $\rd\om_\theta=0$;
%------ii
    \item [($ii$)] $\om_\theta\mid_{\p\mathbb{R}^{n+1}_+}=0$;
%------iii
    \item [($iii$)]
    For any positively oriented orthonormal basis $\{\tilde\tau_1,\ldots,\tilde\tau_n\}$ of a hyperplane $\mbP$ in $\mathbb{R}^{n+1}$
    (i.e., $\{\nu_\mbP,\tilde\tau_1,\ldots,\tilde\tau_n\}$ agrees with the orientation ${\rm vol}$ of $\mathbb{R}^{n+1}$), there holds
    \eq{
    \om_\theta\mid_{(x,x_{n+1})}(\tilde\tau_1,\tilde\tau_2,\ldots,\tilde\tau_n)\leq1-\cos\theta\left<\nu_\mbP,e_{n+1}\right>,
    }
    Moreover, equality holds if and only if $\mbP$ is a tangent space of $T_{(x,u(x))}\S$.
\end{enumerate}
\end{lemma}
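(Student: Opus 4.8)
The plan is to check the three items by direct computation, using essentially only the identity $d(\iota_X{\rm vol}) = ({\rm div}\,X)\,{\rm vol}$ (valid since ${\rm vol}$ is a top-degree form, hence closed) together with the fact that, by construction, $\nu$, and therefore $\nu_\theta = \nu - \cos\theta\,e_1$, is independent of the last coordinate $x_{n+1}$. For $(i)$, by that identity it is enough to show ${\rm div}\,\nu_\theta \equiv 0$ on $\overline{\mathbb{R}^{n+1}_+}$: the constant field $\cos\theta\,e_1$ is divergence free, and since the extension $\nu(x,x_{n+1}) = \nu(x)$ has vanishing $\p_{x_{n+1}}$-derivative, ${\rm div}_{\mathbb{R}^{n+1}}\nu = \sum_{i=1}^{n}\p_{x_i}\!\big(\tfrac{-u_i}{\sqrt{1+\abs{Du}^2}}\big) = -{\rm div}_{\mathbb{R}^{n}}\!\big(\tfrac{Du}{\sqrt{1+\abs{Du}^2}}\big)$, which vanishes because $u$ solves the minimal surface equation \eqref{eq:MSE-intro}; hence $d\om_\theta = 0$.

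For $(ii)$, the pullback of $\iota_X{\rm vol}$ to the hyperplane $\p\mathbb{R}^{n+1}_+ = \{x_1 = 0\}$, whose unit normal is $e_1$, is $\langle X, e_1\rangle\,\rd\mcH^n$; taking $X = \nu_\theta$ this equals $\big(\langle\nu,e_1\rangle - \cos\theta\big)\,\rd\mcH^n$. Now for every $(0,x',x_{n+1})\in\p\mathbb{R}^{n+1}_+$ we have $\nu(0,x',x_{n+1}) = \nu(0,x')$ with $(0,x')\in\p\mathbb{R}^{n}_+$, so $\langle\nu,e_1\rangle = \cos\theta$ by the capillary boundary condition \eqref{condi:capillary-bdry-1}, and the restriction vanishes identically.

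For $(iii)$, decompose $\nu_\theta = \langle\nu_\theta,\nu_\mbP\rangle\,\nu_\mbP + \nu_\theta^{\top}$ with $\nu_\theta^{\top}\in\mbP$. Since $\{\tilde\tau_1,\ldots,\tilde\tau_n\}$ spans $\mbP$, contracting ${\rm vol}$ against $\nu_\theta^{\top},\tilde\tau_1,\ldots,\tilde\tau_n$ produces a determinant with linearly dependent columns, hence $0$; therefore, using that $\{\nu_\mbP,\tilde\tau_1,\ldots,\tilde\tau_n\}$ is positively oriented, $\om_\theta\mid_{(x,x_{n+1})}(\tilde\tau_1,\ldots,\tilde\tau_n) = {\rm vol}(\nu_\theta,\tilde\tau_1,\ldots,\tilde\tau_n) = \langle\nu_\theta,\nu_\mbP\rangle = \langle\nu,\nu_\mbP\rangle - \cos\theta\,\langle e_1,\nu_\mbP\rangle$. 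By Cauchy--Schwarz $\langle\nu,\nu_\mbP\rangle\le\abs{\nu}\,\abs{\nu_\mbP} = 1$, which gives the asserted inequality, and equality holds precisely when $\nu_\mbP = \nu(x)$, i.e.\ when $\mbP$ is the tangent plane $T_{(x,u(x))}\S$, whose upward unit normal $\nu(x)$ is oriented compatibly with the $\tilde\tau_i$.

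The computation is routine; the two points that require a little care are recording that the $x_{n+1}$-independent extension of $\nu$ is exactly what makes the divergence in $(i)$ collapse onto the minimal surface operator, and keeping the orientation conventions straight in $(iii)$ so that the inner product $\langle\nu_\theta,\nu_\mbP\rangle$, hence both the direction of the inequality and its equality case, come out with the right sign. There is no genuine analytic difficulty.
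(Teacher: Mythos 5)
Your proof is correct and follows essentially the same route as the paper: for $(i)$ you decompose $\om_\theta=\om-\cos\theta\,\iota_{e_1}{\rm vol}$ and reduce to the minimal surface equation, for $(ii)$ you note that the $e_1$-component of $\nu_\theta$ vanishes on $\p\mathbb{R}^{n+1}_+$ by the capillary condition, and for $(iii)$ you identify $\om_\theta(\tilde\tau_1,\ldots,\tilde\tau_n)=\langle\nu_\theta,\nu_\mbP\rangle$ via the normal--tangential decomposition and apply Cauchy--Schwarz to $\langle\nu,\nu_\mbP\rangle$, which is exactly the paper's two ``facts'' made explicit. Your computation also (implicitly) corrects a typo in the statement of $(iii)$: the right-hand side should be $1-\cos\theta\langle\nu_\mbP,e_1\rangle$ (as both your argument and the paper's proof actually yield, and as consistency with $F_\theta$ requires), not $1-\cos\theta\langle\nu_\mbP,e_{n+1}\rangle$.
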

\begin{proof}
Note that since $\nu_\theta=\nu-\cos\theta e_1$, we can write
\eq{
\om_\theta
=\om-\cos\theta\iota_{e_1}{\rm vol},
}
where $\om=\iota_\nu{\rm vol}$ is the classical calibration, and satisfies $\rd\om=0$ since $\S$ is a minimal graph.
On the other hand, it is easy to see that $\rd(\iota_{e_1}{\rm vol})=0$, which proves ($i$).

Conclusion ($ii$) simply follows from the fact that $\nu_\theta(x)\in\p\mathbb{R}^{n+1}_+$ for any $x\in\p\mathbb{R}^n_+$, since
\eq{
\nu_\theta(x)
=\frac{(-Du(x),1)}{\sqrt{1+\abs{Du(x)}^2}}-\cos\theta e_1
\overset{\eqref{condi:capillary-bdry-2}}{=}\frac{(0,-\bar Du(x),1)}{\sqrt{1+\abs{Du(x)}^2}},\quad\forall x\in\p\mathbb{R}^n_+.
}

Conclusion ($iii$) follows from the following two facts:
For any positively oriented orthonormal basis $\{\tilde\tau_1,\ldots,\tilde\tau_n\}$ of a hyperplane $\mbP$ in $\mathbb{R}^{n+1}$,
\begin{enumerate}
    \item The classical calibration $\om$ satisfies:
    \eq{
    \om\mid_{(x,x_{n+1})}(\tilde\tau_1,\ldots,\tilde\tau_n)
    \leq1,
    }
    and equality holds if and only if $\mbP=T_{(x,u(x))}\S$.
    \item $-\cos\theta\left(\iota_{e_1}{\rm vol}\right)(\tilde\tau_1,\ldots,\tilde\tau_n)=\left<-\cos\theta e_1,\nu_\mbP\right>$.
\end{enumerate}
This completes the proof.
\end{proof}

\begin{proposition}\label{Prop:capillary-calibration-minimizer}
Let $\theta\in(0,\pi)$, $u$ be a smooth function on $\mathbb{R}^n_+$ and $\S$ be its corresponding graph, such that $\S$ is a capillary minimal graph in the sense of Definition \ref{Defn:capillary-graph}.
Then $\S$ is a minimizer of the capillary area functional \eqref{defn:capillary-area-functional} in the following sense:

Let $E\subset\mathbb{R}^n_+$, denote the truncated hypersurface $\S\cap(E\times\mathbb{R})$ by $\tilde\S$, and $\S'\subset E\times\mathbb{R}$ is any other hypersurface with
\eq{\label{condi:competitor}
\p\S'\cap\mathbb{R}^{n+1}_+
=\p\tilde\S\cap\mathbb{R}^{n+1}_+,
}
serving as a competitor.
Then
\eq{
\int_{\S'}F_\theta(\nu(p))\rd\mcH^n(p)
\geq\int_{\tilde\S}
%_{\S\cap(E\times\mathbb{R})}
F_\theta(\nu(p))\rd\mcH^n(p).
}
\end{proposition}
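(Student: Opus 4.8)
The plan is to run the standard calibration argument, now in the capillary setting, using the three properties of $\om_\theta$ established in Lemma \ref{Lem:capillary-calibration}. First I would observe that by the area formula and the computation already recorded in Section \ref{Sec:2-2-1}, the capillary area functional $\int_{\tilde\S}F_\theta(\nu)\,\rd\mcH^n$ equals $\int_{\tilde\S}\om_\theta$, where the integral of the $n$-form is taken with respect to the orientation induced by the upward normal $\nu$; this is precisely the equality case of Lemma \ref{Lem:capillary-calibration}($iii$), since each tangent plane of $\tilde\S$ is a tangent space of $\S$. So the left-hand inequality reduces to showing $\int_{\S'}F_\theta(\nu')\,\rd\mcH^n\geq\int_{\tilde\S}\om_\theta$.

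Next I would bound $\int_{\tilde\S}\om_\theta$ from above by comparing it with $\int_{\S'}\om_\theta$. Consider the region $U\subset E\times\mathbb{R}$ enclosed between $\tilde\S$ and $\S'$; because of the boundary matching condition \eqref{condi:competitor}, together with Lemma \ref{Lem:capillary-calibration}($ii$) (which kills the contribution of any part of $\p U$ lying in $\p\mathbb{R}^{n+1}_+$), the oriented boundary $\p U$ consists of $\tilde\S$ and $\S'$ with opposite orientations, possibly plus a piece of $\p\mathbb{R}^{n+1}_+$. Applying Stokes' theorem and Lemma \ref{Lem:capillary-calibration}($i$), $\rd\om_\theta=0$, we get $\int_{\tilde\S}\om_\theta=\int_{\S'}\om_\theta$ (with matching orientations). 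Finally, Lemma \ref{Lem:capillary-calibration}($iii$) gives the pointwise bound $\om_\theta|_{p}(\tilde\tau_1,\dots,\tilde\tau_n)\leq 1-\cos\theta\langle\nu_\mbP,e_{n+1}\rangle$ on $\S'$; but $1-\cos\theta\langle\nu_\mbP,e_{n+1}\rangle$ is not quite $F_\theta(\nu')$, so I would instead invoke the sharper statement that for a unit normal $\nu'$ to $\S'$ one has $\om_\theta|_p(\tilde\tau_1,\dots,\tilde\tau_n)\le |\langle\nu_\theta,\nu'\rangle|\le |\nu_\theta|=F_\theta(\nu')$, which is just Cauchy--Schwarz applied to $\iota_{\nu_\theta}{\rm vol}$ evaluated on an orthonormal frame; integrating gives $\int_{\S'}\om_\theta\le\int_{\S'}F_\theta(\nu')\,\rd\mcH^n$, and chaining the three relations completes the proof.

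The main obstacle, and the point that needs care, is the boundary bookkeeping in the Stokes step: one must be precise about what the competitor $\S'$ is allowed to be (e.g.\ that $\tilde\S$ and $\S'$ together with a portion of $\p\mathbb{R}^{n+1}_+$ bound a region of finite perimeter, or more simply that $\S'$ is a Lipschitz graph-like hypersurface with the stated boundary), so that the integration by parts is legitimate and the boundary term on $\p\mathbb{R}^{n+1}_+$ genuinely vanishes by property ($ii$). A second minor point is that the quantity being estimated in ($iii$) of the lemma uses $\nu_\mbP$, the normal to the competitor plane rather than $\nu_\theta$ itself; replacing the bound $1-\cos\theta\langle\nu_\mbP,e_{n+1}\rangle$ by the cleaner $F_\theta(\nu')=|\nu_\theta|$ requires the observation above that $\iota_{\nu_\theta}{\rm vol}$ evaluated on any oriented orthonormal $n$-frame equals $\langle\nu_\theta,\nu_\mbP\rangle$, whose absolute value is at most $|\nu_\theta|$ with equality exactly when $\nu_\mbP=\pm\nu_\theta/|\nu_\theta|$. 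Everything else is routine once the current/divergence-theorem framework is fixed.
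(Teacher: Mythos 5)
Your overall route is exactly the paper's: identify $\int_{\tilde\S}F_\theta(\nu)\,\rd\mcH^n$ with $\int_{\tilde\S}\om_\theta$ via the equality case of Lemma \ref{Lem:capillary-calibration}($iii$), apply Stokes to $\rd\om_\theta=0$ over the region $\mathscr D$ bounded by $\tilde\S$, $\S'$ and the wetting piece of $\p\mathbb{R}^{n+1}_+$ (killed by property ($ii$)), and then bound $\om_\theta$ pointwise on $\S'$ by the capillary density. Up to the last step this matches the paper's proof verbatim.

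The final step, however, contains a genuine error. You are right to be suspicious of the expression $1-\cos\theta\langle\nu_\mbP,e_{n+1}\rangle$ in the statement of Lemma \ref{Lem:capillary-calibration}($iii$): $e_{n+1}$ there is a typo for $e_1$, as the lemma's own proof makes clear (it computes $-\cos\theta(\iota_{e_1}{\rm vol})(\tilde\tau_1,\ldots,\tilde\tau_n)=\langle -\cos\theta e_1,\nu_\mbP\rangle$), so the intended bound is precisely $1-\cos\theta\langle\nu_\mbP,e_1\rangle=F_\theta(\nu_\mbP)$, which is what you need. Instead of using this, you replace it by ``$\om_\theta|_p(\tilde\tau_1,\ldots,\tilde\tau_n)\le|\langle\nu_\theta,\nu'\rangle|\le|\nu_\theta|=F_\theta(\nu')$.'' This chain is wrong in two places. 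First, $|\nu_\theta|$ depends only on $\nu$, the normal of the \emph{fixed} graph $\S$ at the underlying point $x$, and is independent of $\nu'$; so it cannot equal $F_\theta(\nu')$ for an arbitrary competitor. Second, even for $\nu$ itself the identity fails: $|\nu_\theta|^2=1-2\cos\theta\langle\nu,e_1\rangle+\cos^2\theta$, whereas $F_\theta(\nu)^2=(1-\cos\theta\langle\nu,e_1\rangle)^2$, and these agree only when $\cos\theta=0$ or $\langle\nu,e_1\rangle=\pm1$. The Cauchy--Schwarz step must be applied only to the $\nu$-part of $\nu_\theta$, not to $\nu_\theta$ as a whole:
\begin{equation}
\om_\theta(\tilde\tau_1,\ldots,\tilde\tau_n)
=\langle\nu_\theta,\nu'\rangle
=\langle\nu,\nu'\rangle-\cos\theta\langle e_1,\nu'\rangle
\le 1-\cos\theta\langle e_1,\nu'\rangle
=F_\theta(\nu'),
\end{equation}
with equality iff $\nu'=\nu$. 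This is exactly the content of Lemma \ref{Lem:capillary-calibration}($iii$) once the typo is corrected, and no further modification is needed.

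Your remarks on the bookkeeping in the Stokes step (what class of competitors $\S'$ is admissible, the existence of the enclosed region $\mathscr D$) are reasonable technical caveats, and the paper handles them at the same informal level; but they are not where the argument goes astray.
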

\begin{proof}

By \eqref{condi:competitor} there exists a domain $\mathscr{D}\subset\mathbb{R}^{n+1}_+$ such that $\p\mathscr{D}=\tilde\S\cup\S'\cup\left(\p\mathscr{D}\cap\p\mathbb{R}^{n+1}_+\right)$, where the set $\p\mathscr{D}\cap\p\mathbb{R}^{n+1}_+$ is known as the wetting region associated with $\mathscr{D}$.
Stokes' theorem, in conjunction with Lemma \ref{Lem:capillary-calibration} ($i$) ($ii$), gives
\eq{
0=\int_{\mathscr{D}}\rd\om_\theta
=\int_{\tilde\S}\om_\theta-\int_{\S'}\om_\theta.
}
Combining with \ref{Lem:capillary-calibration} ($iii$) we obtain as required that
\eq{
\int_{\tilde\S}F_\theta(\nu(p))\rd\mcH^n(p)
=\int_{\tilde\S}\om_\theta
=\int_{\S'}\om_\theta
\leq\int_{\S'}F_\theta(\nu(p))\rd\mcH^n(p),
}
which completes the proof.
\end{proof}

\begin{remark}
\normalfont
In view of \eqref{eq:F_theta-energy-capillary-energy}, we deduce by virtue of Proposition \ref{Prop:capillary-calibration-minimizer} that any capillary minimal graph over $\mathbb{R}^n_+$, is stable for the capillary functional.
Moreover, a standard argument in conjunction with Proposition \ref{Prop:capillary-calibration-minimizer} shows that any such $\S$ automatically has Euclidean area growth.
With these two ingredients, the Bernstein theorems for capillary hypersurface in the half-space \cite{HS23,MP21,LZZ24} apply and yield a half-space Bernstein theorem for minimal graphs with capillary boundary.
\end{remark}

%=======
\section{A Bernstein-type theorem for minimal graphs with free boundary}\label{App-2}

In this section we consider the following minimal surface equation:
Let $\Om\subset\mathbb{R}^n$ be an unbounded domain (open, connected) with $C^2$-boundary $\p\Om$,
and $\bar N$ be the outer unit normal of $\Om$ along $\p\Om$.
Let $u$ be a smooth function defined on $\Om$ and denote its graph by $\S=\left\{(x,u(x)):x\in\Om\right\}$, which is a hypersurface in the cylinder $\Om\times\mathbb{R}$.
We say that $u$ solves the \textit{free boundary minimal surface equation on $\Om$}, if $u$ satisfies
\eq{\label{defn:MSE-free-bdry}
\begin{cases}
    {\rm div}\left(\frac{Du}{\sqrt{1+\abs{Du}^2}}\right)=0,\quad&\text{on }\Om,\\
    \left<Du(x),\bar N(x)\right>=0,
    %\left<\frac{(-Du(x),1)}{\sqrt{1+\abs{Du(x)}^2}},\bar N(x)\right>=0,
    \quad&\forall x\in\p\Om.
\end{cases}
}
Equivalently, the graph $\S$ is a free boundary minimal hypersurface in $\Om\times\mathbb{R}$ (note that, with a slight abuse of notation, if we denote by $\bar N$ the outer unit normal of $\Om\times\mathbb{R}$ along $\p\Om\times\mathbb{R}$, then $\bar N(x,x_{n+1})=\bar N(x)$, for any $x\in\p\Om$, and $x_{n+1}\in\mathbb{R}$).

In this case, the Bernstein-type theorem reads as follows.
\begin{theorem}\label{Thm:Bernstein-free-bdry}
Let $\Om\subset\mathbb{R}^n$ be an unbounded domain with $C^2$-boundary $\p\Om$.
Let $u$ be a smooth solution to the free boundary minimal surface equation on $\Om$.
If $2\leq n\leq 6$, then $u$ is affine.
\end{theorem}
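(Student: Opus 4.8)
The plan is to reduce Theorem \ref{Thm:Bernstein-free-bdry} to the well-established interior Bernstein theorem on $\mathbb{R}^n$ for $2\le n\le 6$ by a reflection argument, combined with the calibration/stability input developed in Appendix \ref{App-1} for the capillary case (here specialized to $\theta=\pi/2$, i.e. the free boundary case). First I would note that, exactly as in Remark \ref{Rem:theta>pi/2}, the free boundary condition $\langle Du,\bar N\rangle=0$ on $\partial\Om$ means the graph $\S$ meets the cylindrical boundary $\partial\Om\times\mathbb{R}$ orthogonally, so one can reflect $\S$ across $\partial\Om\times\mathbb{R}$. The difficulty, compared to the half-space case, is that $\partial\Om$ is merely a $C^2$ hypersurface, not a hyperplane, so there is no global reflection producing a smooth minimal hypersurface in a flat space. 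Hence the argument must be run through minimal surface regularity and a blow-down/contradiction scheme rather than a naive reflection.

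The key steps I would carry out are as follows. Step 1: Set up the free boundary calibration. As in Appendix \ref{App-1} with $\theta=\pi/2$, the form $\omega_{\pi/2}=\iota_\nu{\rm vol}$ is closed, vanishes on $\partial\Om\times\mathbb{R}$ (since the free boundary condition forces $\nu\in T(\partial\Om\times\mathbb{R})$ along $\partial\S$), and calibrates $\S$. By Stokes' theorem exactly as in Proposition \ref{Prop:capillary-calibration-minimizer}, $\S$ is area-minimizing among competitors in $\Om\times\mathbb{R}$ with the same relative boundary; in particular $\S$ is a stable free boundary minimal hypersurface and has Euclidean area growth (the density ratios $\mathcal{H}^n(\S\cap B_R)/\omega_n R^n$ are bounded). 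Step 2: Perform a blow-down. Consider the rescalings $\S_\lambda=\lambda^{-1}\S$ in the domains $\lambda^{-1}(\Om\times\mathbb{R})$. Because $\partial\Om$ is $C^2$, the rescaled domains converge locally to a half-space $H\times\mathbb{R}$ (if $0\in\partial\Om$) or to all of $\mathbb{R}^{n+1}$ (if $\dist(0,\partial\Om)>0$, but by translating the base point we may assume $0\in\overline\Om$); using the uniform area bounds from Step 1 and compactness for area-minimizing boundaries (Federer–Fleming, plus boundary regularity for minimizers with free boundary, e.g. Grüter–Jost), a subsequence converges to an area-minimizing cone $\mathcal{C}$ in $\mathbb{R}^{n+1}$ which, after the orthogonal reflection across the limiting hyperplane $\partial H\times\mathbb{R}$, is an area-minimizing cone in $\mathbb{R}^{n+1}$. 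Step 3: Apply the classical Bernstein/Simons result. Since $n+1\le 7$, every area-minimizing hypercone in $\mathbb{R}^{n+1}$ is a hyperplane (Simons \cite{Simons68}, with the dimension count as in \cite{BDEGG69}); hence the blow-down of $\S$ is flat, which forces $\S$ itself to have vanishing second fundamental form, i.e. $u$ is affine. Alternatively, since $\S$ is a complete stable minimal hypersurface with Euclidean volume growth, one can invoke the curvature estimates of \cite{SSY75} (interior) together with boundary curvature estimates in the free boundary setting (cf. \cite{GLZ20}) to conclude directly that the second fundamental form vanishes when $2\le n\le 6$.

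Concretely, I would structure the written proof around the contradiction version of Step 2: assume $u$ is not affine, so $\sup_\S|A|>0$; rescale to a sequence of free boundary minimal graphs with a definite amount of curvature at a fixed-scale point; use the calibration-derived uniform area bounds and minimality to extract a limit that is (after reflection) a nontrivial area-minimizing hypersurface in $\mathbb{R}^{n+1}$ with $n+1\le 7$, singular or non-flat; this contradicts the classical Bernstein theorem \cite{Simons68,DeGiorgi65,Almgren66}. The main obstacle I anticipate is Step 2: making the reflection rigorous at the level of currents/minimizers when $\partial\Om$ is only $C^2$ — one must ensure the reflected object across the (non-flat, but asymptotically flat at the blow-down scale) boundary is genuinely minimizing, which requires care with the boundary regularity theory for free boundary minimizers and with the fact that the two reflected pieces glue $C^1$ across $\partial\Om\times\mathbb{R}$ precisely because of the orthogonality condition. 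Once the blow-down cone is produced, invoking \cite{Simons68} (valid through dimension $7$ of the ambient, i.e. $n\le 6$) is immediate, and the sharpness matches the interior Bernstein threshold.
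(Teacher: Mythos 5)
Your Step 1 is exactly the paper's Appendix \ref{App-2} input (the free boundary calibration, Proposition \ref{Prop:area-minimizing-free-bdry}: minimizing, stable, Euclidean area growth), but the main route you build on it has a genuine gap. You assert that, because $\p\Om$ is $C^2$, the blow-downs $\lambda^{-1}(\Om\times\mathbb{R})$, $\lambda\to\infty$, converge locally to a half-space $H\times\mathbb{R}$ or to all of $\mathbb{R}^{n+1}$. This is false: $C^2$ regularity is a small-scale property, and zooming \emph{out} magnifies the boundary curvature (the second fundamental form of $\lambda^{-1}\p\Om$ at $\lambda^{-1}x$ is $\lambda$ times that of $\p\Om$ at $x$). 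The blow-down of an unbounded $C^2$ domain can be a cone with an edge, a slab collapsing onto a hyperplane, or a completely degenerate set (e.g.\ $\Om=\{x_1>x_2^2\}$ blows down to a half-hyperplane, $\Om=\{|x_1|<1\}\times\mathbb{R}^{n-1}$ to a hyperplane). Hence in general there is no limiting hyperplane across which to reflect, and the blow-down limit of $\S$ need not be an area-minimizing cone in $\mathbb{R}^{n+1}$, so the appeal to Simons' classification \cite{Simons68} is not available. The same defect carries over to your ``contradiction version'': assuming only that $u$ is not affine gives $\sup_\S\abs{A}>0$, but it does not make the rescaling factors of the point-picking diverge, and only divergent factors (a genuine blow-\emph{up} at points where the curvature blows up) flatten the $C^2$ boundary so that the reflection is legitimate in the limit. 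This is precisely how the paper argues: it proves the scale-invariant curvature estimate \eqref{esti:curvature-free-bdry} by contradiction, rescaling with $\lambda_i=\abs{A^{\S_i}}(y_i)\to\infty$ as in \cite{GLZ20}, so the limits are either entire or free boundary in a true half-space, and then contradicts the Bernstein theorem for complete \emph{stable} minimal hypersurfaces with Euclidean area growth \cite{SSY75,SS81,Bellettini25}; Theorem \ref{Thm:Bernstein-free-bdry} then follows by letting $R\to\infty$ in the estimate.

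A second, smaller gap: even when a half-space blow-down limit does exist, the inference ``the blow-down is flat, hence $\S$ has vanishing second fundamental form'' is not automatic. It requires a monotonicity formula up to the free boundary in the curved cylinder $\p\Om\times\mathbb{R}$ (which carries error terms) together with a constant-density rigidity argument; none of this is addressed. The paper avoids it entirely, since flatness is read off pointwise from the curvature estimate. Your parenthetical ``alternative'' (interior estimates of \cite{SSY75} plus free boundary curvature estimates in the spirit of \cite{GLZ20}) is in fact the paper's actual proof; if you develop that branch, with the blow-up performed where the curvature is large rather than at infinity, you recover the paper's argument, but the blow-down-to-a-minimizing-cone route as written does not go through for a general unbounded $C^2$ domain.
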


The proof relies on establishing curvature estimate for free boundary minimal graph, which follows essentially from the curvature estimate of immersed/embedded stable free boundary minimal hypersurface by Guang-Li-Zhou \cite{GLZ20}.
Before we proceed to that step, let us first state the following facts on calibration (we continue to use the notations $\nu,\om$ as in Appendix \ref{App-1}).
\begin{lemma}
Under the above notations, the calibration $\om$ satisfies
\begin{enumerate}
    \item [($i$)] $\rd\om=0$;
    \item [($ii$)] $\om\mid_{\p(\Om\times\mathbb{R})}=0$;
    \item [($iii$)] For any positively oriented orthonormal basis $\{\tilde\tau_1,\ldots,\tilde\tau_n\}$ of a hyperplane $\mbP$ in $\mathbb{R}^{n+1}$
    (i.e., $\{\nu_\mbP,\tilde\tau_1,\ldots,\tilde\tau_n\}$ agrees with the orientation ${\rm vol}$ of $\mathbb{R}^{n+1}$), there holds
    \eq{
    \om\mid_{(x,x_{n+1})}(\tilde\tau_1,\ldots,\tilde\tau_n)
    \leq1,
    }
    and equality holds if and only if $\mbP=T_{(x,u(x))}\S$.
\end{enumerate}
\end{lemma}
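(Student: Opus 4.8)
The three assertions are the free-boundary counterparts of Lemma~\ref{Lem:capillary-calibration}: they correspond to taking $\cos\theta=0$ there, with the half-space $\mathbb{R}^{n+1}_+$ replaced by the cylinder $\Om\times\mathbb{R}$, and the plan is to verify each of them by the same short computation, now invoking the system \eqref{defn:MSE-free-bdry}.

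For ($i$), I would note that $\nu(x,x_{n+1})=\nu(x)$ has no dependence on $x_{n+1}$, so that $\rd\om=\rd(\iota_\nu{\rm vol})=(\div_{\mathbb{R}^{n+1}}\nu)\,{\rm vol}$, and
\eq{
\div_{\mathbb{R}^{n+1}}\nu
=\sum_{i=1}^n\p_{x_i}\left(\frac{-u_i(x)}{\sqrt{1+\abs{Du(x)}^2}}\right)
=-\div\left(\frac{Du}{\sqrt{1+\abs{Du}^2}}\right)=0
}
by the first equation in \eqref{defn:MSE-free-bdry}. For ($ii$), the relative boundary $\p\Om\times\mathbb{R}$ has unit normal $(\bar N(x),0)$ in $\mathbb{R}^{n+1}$; since the pullback of $\iota_X{\rm vol}$ to a hypersurface with unit normal $N$ equals $\langle X,N\rangle$ times the induced area form, one gets $\om\mid_{\p\Om\times\mathbb{R}}=\langle\nu,(\bar N,0)\rangle\,\rd A$, which vanishes because
\eq{
\langle\nu(x),(\bar N(x),0)\rangle
=\frac{-\langle Du(x),\bar N(x)\rangle}{\sqrt{1+\abs{Du(x)}^2}}
=0,\quad x\in\p\Om,
}
by the second equation in \eqref{defn:MSE-free-bdry}.

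For ($iii$), I would decompose $\nu=\langle\nu,\nu_\mbP\rangle\,\nu_\mbP+\nu^\top$ with $\nu^\top\in\mbP$; since $\{\nu_\mbP,\tilde\tau_1,\ldots,\tilde\tau_n\}$ is a positively oriented orthonormal basis, $\nu^\top$ contributes nothing to ${\rm vol}(\,\cdot\,,\tilde\tau_1,\ldots,\tilde\tau_n)$, and therefore
\eq{
\om\mid_{(x,x_{n+1})}(\tilde\tau_1,\ldots,\tilde\tau_n)
={\rm vol}(\nu,\tilde\tau_1,\ldots,\tilde\tau_n)
=\langle\nu,\nu_\mbP\rangle
\leq\abs{\nu}\,\abs{\nu_\mbP}=1
}
by Cauchy--Schwarz, with equality precisely when $\nu=\nu_\mbP$, i.e.\ when $\mbP=\nu^\perp=T_{(x,u(x))}\S$.

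None of these steps poses a genuine obstacle; the only substantive point — as already in Appendix~\ref{App-1} — is that the minimal surface equation is exactly the closedness of $\om$, while the free boundary condition is exactly the vanishing of $\om$ on the cylinder wall, which is what makes $\om$ a calibration adapted to the free boundary problem. The hard part of Theorem~\ref{Thm:Bernstein-free-bdry} lies downstream of this lemma, namely in the curvature estimate for stable free boundary minimal graphs, obtained by a contradiction/blow-up argument in the spirit of Guang--Li--Zhou~\cite{GLZ20}.
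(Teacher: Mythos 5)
Your proof is correct and follows essentially the same route as the paper's: the paper defers ($i$) and ($iii$) to the standard facts already used in Lemma~\ref{Lem:capillary-calibration} (minimality gives $\rd(\iota_\nu{\rm vol})=({\rm div}_{\mathbb{R}^{n+1}}\nu)\,{\rm vol}=0$, and the orthogonal-decomposition/Cauchy--Schwarz argument for the comass), and for ($ii$) it observes that the free boundary condition forces $\nu$ to be tangent to $\p\Om\times\mathbb{R}$, i.e.\ $\langle\nu,(\bar N,0)\rangle=0$, which is exactly your computation. You simply spell these steps out in more detail; there is nothing to flag.
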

\begin{proof}
Conclusions ($i$) and ($iii$) are contained in the proof of Lemma \ref{Lem:capillary-calibration}.
To prove ($ii$), it suffices to observe that thanks to the free boundary condition, we have $\nu(x,x_{n+1})\in T_{(x,x_{n+1})}\p(\Om\times\mathbb{R})$ for any $x\in\p\Om$ and $x_{n+1}\in\mathbb{R}$, so that $\om\mid_{\p\Om\times\mathbb{R}}=(\iota_\nu{\rm vol})\mid_{\p\Om\times\mathbb{R}}=0$.
\end{proof}
A standard argument then shows the following area minimizing property.
\begin{proposition}[Area minimizing]\label{Prop:area-minimizing-free-bdry}
Let $\Om\subset\mathbb{R}^n$ be an unbounded domain with $C^2$-boundary $\p\Om$, 
and $u$ be smooth solution to the free boundary minimal surface equation on $\Om$, and let $\S$ be the graph of $u$.
Then $\S$ is a minimizer of the area functional in the following sense:

Let $E\subset\Om$, denote the truncated hypersurface $\S\cap(E\times\mathbb{R})$ by $\tilde\S$, and $\S'\subset E\times\mathbb{R}$ is any other hypersurface with
\eq{
\p\S'\cap\left(\Om\times\mathbb{R}\right)
=\p\tilde\S\cap\left(\Om\times\mathbb{R}\right),
}
serving as a competitor.
Then
\eq{
\mcH^n(\S')
\geq\mcH^n(\tilde\S).
}
In particular, one has
\begin{enumerate}
    \item [($i$)] $\S$ is a stable free boundary minimal hypersurface (graph) in $\Om\times\mathbb{R}$.
    \item [($ii$)] $\S$ satisfies the following Euclidean area growth condition: for any $p\in\mathbb{R}^{n+1}$ and any $r>0$,
    \eq{\label{ineq:Euclidean-growth}
    \mcH^n(\S\cap B_r(p))
    \leq \mcH^n(\mbS^n)r^n
    =(n+1)\om_{n+1}r^n.
    }
\end{enumerate}
\end{proposition}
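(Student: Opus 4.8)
The plan is to run exactly the calibration argument from the proof of Proposition~\ref{Prop:capillary-calibration-minimizer}, now using the \emph{classical} calibration $\om=\iota_\nu{\rm vol}$ in place of the capillary calibration $\om_\theta$, and invoking the three properties of $\om$ recorded in the lemma preceding this proposition: $\rd\om=0$, $\om|_{\p(\Om\times\mathbb{R})}=0$ (this is where the free boundary condition $\langle Du,\bar N\rangle=0$ enters, forcing $\nu\in T\p(\Om\times\mathbb{R})$ along the boundary), and the comass bound $\om|_{(x,x_{n+1})}(\tilde\tau_1,\ldots,\tilde\tau_n)\le 1$ with equality precisely on $T_{(x,u(x))}\S$. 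Given a competitor $\S'$ as in the statement, the hypothesis $\p\S'\cap(\Om\times\mathbb{R})=\p\tilde\S\cap(\Om\times\mathbb{R})$ means that $\tilde\S$, $\S'$, and a wetting region contained in $\p\Om\times\mathbb{R}$ together bound a region $\mathscr{D}\subset\Om\times\mathbb{R}$; since $\S$ is a graph over $\Om$, this $\mathscr{D}$ is constructed explicitly, exactly as in Appendix~\ref{App-1}. Stokes' theorem applied to $\rd\om=0$ on $\mathscr{D}$, combined with $\om|_{\p(\Om\times\mathbb{R})}=0$ to kill the wetting contribution, gives $\int_{\tilde\S}\om=\int_{\S'}\om$, and then the comass bound yields $\mcH^n(\tilde\S)=\int_{\tilde\S}\om=\int_{\S'}\om\le\mcH^n(\S')$, which is the asserted minimality.

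For the two ``in particular'' statements: item ($i$) is immediate, since a minimizer in the free boundary class (competitors whose trace is allowed to slide along $\p\Om\times\mathbb{R}$) has nonnegative second variation of area, i.e. is a stable free boundary minimal hypersurface. For item ($ii$), the plan is to test the minimality just proved against the spherical cap competitor: fix $p\in\mathbb{R}^{n+1}$ and $r>0$, let $\mathscr{D}\subset\Om\times\mathbb{R}$ be the region bounded by $\S$, and replace $\S\cap B_r(p)$ by the cap $\p B_r(p)\cap\mathscr{D}$. This is an admissible competitor, since $\p B_r(p)\cap\mathscr{D}\subset\Om\times\mathbb{R}$ (because $\mathscr{D}$ is), it agrees with $\S$ outside $B_r(p)$, and it shares the trace $\S\cap\p B_r(p)$ inside $\Om\times\mathbb{R}$; hence
\eq{
\mcH^n(\S\cap B_r(p))
\le\mcH^n(\p B_r(p)\cap\mathscr{D})
\le\mcH^n(\p B_r(p))
=\mcH^n(\mbS^n)\,r^n
=(n+1)\om_{n+1}r^n,
}
which is precisely \eqref{ineq:Euclidean-growth}.

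The points requiring care are not in the geometry but in the bookkeeping: one must check that the calibration lemma's three properties genuinely hold in the free boundary setting (this is exactly the content of the preceding lemma), and, more importantly, that the comparison regions $\mathscr{D}$ are well defined — the same mild topological issue already addressed in Appendix~\ref{App-1}, resolved here by the graphical structure over $\Om$, which lets one write the region between $\tilde\S$ and any competitor with the prescribed boundary trace directly. The only ingredient not already present in Proposition~\ref{Prop:capillary-calibration-minimizer} is the spherical cap comparison used for ($ii$), and that step is entirely standard for area minimizers; I do not expect a serious obstacle.
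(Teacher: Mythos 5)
Your proposal is correct and follows precisely the standard calibration argument that the paper invokes (the paper only says ``a standard argument then shows'' and leaves the details to the reader, so your write-up is actually more complete than the text). Part~($i$) is immediate from minimality, and part~($ii$) is the usual spherical-cap comparison. The only place that deserves slightly more care is the bookkeeping in ($ii$): the admissible competitor in the Proposition must be of the form $\S'\subset E\times\mathbb{R}$ with $\tilde\S=\S\cap(E\times\mathbb{R})$, so one should take $E$ to be (say) the projection of $B_r(p)\cap(\Om\times\mathbb{R})$ onto $\Om$, set $\tilde\S=\S\cap(E\times\mathbb{R})$, and use the competitor $\S'=(\tilde\S\setminus B_r(p))\cup(\p B_r(p)\cap\mathscr{D})$ (with $\mathscr{D}$ the subgraph of $u$), after which cancelling the common piece $\tilde\S\setminus B_r(p)$ yields the inequality $\mcH^n(\S\cap B_r(p))\le\mcH^n(\p B_r(p)\cap\mathscr{D})\le(n+1)\om_{n+1}r^n$ exactly as you state. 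This is a cosmetic point; the geometry and the reasoning are right.
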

\begin{remark}[Rescaling]
\normalfont
Let $\lambda_i>0$ and $y_i\in\mathbb{R}^{n+1}$,
let $\eta_i(z)=\lambda_i(z-y_i)$ be a (blow-up) map on $\mathbb{R}^{n+1}$.
Let $\S$ be as in Proposition \ref{Prop:area-minimizing-free-bdry}, then the Euclidean area growth condition also holds for $\eta_i(\S)$.
In fact, we have
\eq{\label{ineq:Euclidean-growth-rescaled}
\mcH^n(\eta_i(\S)\cap B_r(0))
=\lambda^n_i\mcH^n(\S\cap B_{\lambda^{-1}_ir}(y_i))
\overset{\eqref{ineq:Euclidean-growth}}{\leq}(n+1)\om_{n+1}r^n.
}

We also note that the rescaled $\eta_i(\S)$ is a free boundary minimal graph in the rescaled cylinder $\eta_i\left(\Om\times\mathbb{R}\right)$.
\end{remark}

\begin{proposition}[Curvature estimate for free boundary minimal graph]
Let $\Om\subset\mathbb{R}^n$ be an unbounded domain with $C^2$-boundary $\p\Om$, assume WLOG that $0\in\p\Om$.
Let $u$ be smooth and solve the free boundary minimal surface equation on $\Om$, and denote by $\S$ its graph.
If $2\leq n\leq 6$, then for any $R>0$, the curvature estimate holds:
\eq{\label{esti:curvature-free-bdry}
\sup_{p\in\S\cap B_\frac{R}2(0)}\abs{A^\S}(p)
\leq\frac{C_1}R,
}
where $C_1>0$ is a constant depending on $\Om$ and $n$.
\end{proposition}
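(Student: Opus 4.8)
The plan is to deduce this from the curvature estimates of Guang--Li--Zhou \cite{GLZ20} for stable free boundary minimal hypersurfaces, specialized to the graphical setting; the extra inputs are supplied by Proposition \ref{Prop:area-minimizing-free-bdry}. That proposition shows that $\S$ is a \emph{stable} free boundary minimal hypersurface in the manifold-with-boundary $\overline{\Om}\times\mathbb{R}$, and that it satisfies the \emph{uniform Euclidean area bound} \eqref{ineq:Euclidean-growth}; both properties are scale invariant and are inherited by every rescaling and by subsequential limits. With these in hand one may invoke the local curvature estimate of \cite{GLZ20} on $B_R(0)$, valid precisely for $2\leq n\leq 6$, whose hypotheses --- stability, the area bound, and a bound on the geometry of the boundary portion $\p\Om\times\mathbb{R}$ of the ambient space --- all hold; tracking the scaling then gives \eqref{esti:curvature-free-bdry} with $C_1=C_1(\Om,n)$. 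For completeness I sketch below the blow-up argument underlying this.

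First I would argue by contradiction. If \eqref{esti:curvature-free-bdry} failed, there would be solutions $u_j$ of \eqref{defn:MSE-free-bdry} on $\Om$, with graphs $\S_j$, and radii $R_j>0$ with
\eq{
\sup_{p\in\S_j\cap B_{R_j/2}(0)}\abs{A^{\S_j}}(p)>\frac{j}{R_j}.
}
After rescaling $B_{R_j/2}(0)$ to $B_{1/2}(0)$, I would apply Schoen's point-selection lemma (in the free boundary form of \cite{GLZ20}) to produce base points $p_j\in\overline{\S_j\cap B_{1/2}(0)}$ and scales $\sigma_j=\abs{A^{\S_j}}(p_j)\to\infty$ with radii $\rho_j>0$ satisfying $\sigma_j\rho_j\to\infty$ and $\sup_{\S_j\cap B_{\rho_j}(p_j)}\abs{A^{\S_j}}\leq 2\sigma_j$. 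The rescaled hypersurfaces $\widehat\S_j\coloneqq\sigma_j(\S_j-p_j)$ are then free boundary minimal graphs over the rescaled domains $\widehat\Om_j$, with $\abs{A^{\widehat\S_j}}(0)=1$, with $\abs{A^{\widehat\S_j}}\leq 2$ on $\widehat\S_j\cap B_{\sigma_j\rho_j}(0)$, and with the same Euclidean area bound. Using $\sigma_j\to\infty$ together with the $C^2$-regularity of $\p\Om$ --- which forces $\abs{A_{\p\widehat\Om_j}}\to 0$ on compact sets --- I would extract a subsequential $C^2_{\mathrm{loc}}$-limit $\widehat\S_\infty$ that is either (a) a complete, embedded, two-sided minimal hypersurface in $\mathbb{R}^{n+1}$, when the rescaled boundaries escape to infinity, or (b) a complete free boundary minimal hypersurface in a half-space $\mathbb{R}^{n+1}_+$ with flat boundary, when they stay at bounded distance. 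In either case $\widehat\S_\infty$ is a locally smooth limit of minimizers with no loss of mass, hence it is area minimizing, stable, satisfies the Euclidean area bound, and has $\abs{A^{\widehat\S_\infty}}(0)=1$.

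The contradiction is then obtained as follows. In case (a), $\widehat\S_\infty$ is a smooth area-minimizing (hence stable) hypersurface in $\mathbb{R}^{n+1}$ with Euclidean volume growth, so it is a hyperplane --- by \cite{SSY75} for $n\leq 5$, and by the classification of minimizing hypercones (\cite{Simons68}) together with the area bound for $n=6$ --- contradicting $\abs{A^{\widehat\S_\infty}}(0)=1$. In case (b), $\widehat\S_\infty$ is a complete stable free boundary minimal hypersurface in $\mathbb{R}^{n+1}_+$ with Euclidean area growth; reflecting it evenly across $\p\mathbb{R}^{n+1}_+$ gives a complete stable minimal hypersurface in $\mathbb{R}^{n+1}$ with Euclidean area growth (the graph function extends with vanishing normal derivative, hence smoothly, and stability passes to the double by splitting test functions into their symmetric and antisymmetric parts), so case (a) forces $\abs{A^{\widehat\S_\infty}}\equiv 0$, again a contradiction; alternatively one applies the curvature estimate of \cite{GLZ20} on $B_\rho(0)\subset\mathbb{R}^{n+1}_+$ and lets $\rho\to\infty$.

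I expect the main obstacle to be the boundary analysis in the blow-up: justifying the point-selection and the $C^2_{\mathrm{loc}}$-compactness \emph{up to the free boundary}, checking that the rescaled ambient domains $\widehat\Om_j\times\mathbb{R}$ converge to $\mathbb{R}^{n+1}$ or to a half-space with flat boundary using only the $C^2$-regularity of $\p\Om$, and verifying that the area-minimizing and stability properties survive in the limit. These are precisely the technical ingredients developed in \cite{GLZ20}, so in the write-up they would only be sketched, with the constant $C_1$ depending on $n$ and on a bound for the second fundamental form of $\p\Om$ near $0$ (hence on $\Om$).
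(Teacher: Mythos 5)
Your proposal is correct and follows essentially the same route as the paper: a contradiction/blow-up argument following the point selection and compactness scheme of \cite{GLZ20}, using the stability and Euclidean area growth supplied by Proposition \ref{Prop:area-minimizing-free-bdry}, the dichotomy between an interior limit in $\mathbb{R}^{n+1}$ and a free boundary limit in a half-space handled by reflection, and finally a Bernstein-type theorem to contradict $\abs{A}(0)=1$. The only (minor) difference is the endgame for $n=6$: you exploit the inherited area-minimizing property and the classification of minimizing cones \cite{Simons68}, whereas the paper invokes the Bernstein theorems for stable hypersurfaces \cite{SSY75,SS81} (and \cite{Bellettini25} for $n=6$); both are valid.
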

As said, the proof is essentially given by \cite{GLZ20}.
Here we just sketch it.
\begin{proof}[Sketch of proof]
By the rescaling property it suffices to prove the curvature estimate in $B_\frac12(0)$.
Assume by contradiction the curvature estimate fails, then there exists a sequence $\{\S_i\}_{i\in\mbN}$ of free boundary minimal graphs on $\Om$ such that as $i\ra\infty$,
\eq{
\sup_{p\in\S_i\cap B_\frac12(0)}\abs{A^{\S_i}}(p)\ra\infty.
}
Following \cite[Theorem 4.1, Step 1]{GLZ20}, we obtain a sequence of blow-up maps $\eta_i:\mathbb{R}^{n+1}\ra\mathbb{R}^{n+1}$, given precisely by $\eta_i(z)\coloneqq\lambda_i(z-y_i)$, $z\in\mathbb{R}^{n+1}$, where $\{y_i\}_{i\in\mbN}$ is a sequence of points on $\S_i$ satisfying certain property, and $\lambda_i\coloneqq\abs{A^{\S_i}}(y_i)\ra\infty$.
We then get a blow-up sequence of free boundary minimal graphs $\S'_i\coloneqq\eta_i(\S_i)$ in $\eta_i(\Om\times\mathbb{R})$.
Note that
\begin{itemize}
    \item $\abs{A^{\S'_i}}(0)=\lambda^{-1}_i\abs{A^{\S_i}}(y_i)=1$ for each $i\in\mbN$;
    \item For the blow-up sequence of minimal graphs, the uniform Euclidean area growth condition still holds, thanks to \eqref{ineq:Euclidean-growth-rescaled};
    \item Furthermore, for any fixed $r>0$, the curvatures of $\S'_i$ in the fixed ball $B_r(0)$ are uniformly bounded, provided that $i$ is sufficiently large,
    see \cite[eqn. (4.4)]{GLZ20}.
    This is in fact done by the construction of the sequence of points $\{y_i\in\S_i\}_{i\in\mbN}$.
\end{itemize}
With these properties, we can then use the compactness results for minimal submanifolds (without boundary or with free boundary) with bounded curvature and uniform Euclidean area growth to conclude as in \cite[Step 2]{GLZ20} that, after passing to a subsequence, $\S'_i$ converge smoothly and locally uniformly to
\begin{itemize}
    \item either a complete, embedded stable minimal hypersurface $\S^1_\infty$ in $\mathbb{R}^{n+1}$;
    \item or a embedded, stable free boundary minimal hypersurface $\S^2_\infty$ in the Euclidean half-space $\mathbb{R}^{n+1}_+$, such that $\S^2_\infty$ has non-empty free boundary $\p\S^2_\infty$ with $\p\mathbb{R}^{n+1}_+$.
    Reflecting it across the hyperplane $\p\mathbb{R}^{n+1}_+$ we obtain a complete, embedded stable minimal hypersurface in $\mathbb{R}^{n+1}$.
\end{itemize}
In both cases, the same Euclidean area growth as in \eqref{ineq:Euclidean-growth} is satisfied for all $r>0$, with $\S$ replaced by $\S^1_\infty$ or $\S^2_\infty$.
Also by construction, $\abs{A^{\S^1_\infty}}(0)=1$ or $\abs{A^{\S^2_\infty}}(0)=1$, which contradicts to the classical Bernstein theorem \cite{SSY75,SS81}, that $\S^1_\infty$ or $\S^2_\infty$ has to be flat (see also the recent advance by Bellettini \cite{Bellettini25}, which extends the classical Bernstein theorem for stable minimal immersed hypersurface by Schoen-Simon-Yau \cite{SSY75} to $n=6$; and also the work on $\de$-stable minimal hypersurface by Hong-Li-Wang \cite{HLW24}).
This completes the proof.
\end{proof}

\begin{proof}[Proof of Theorem \ref{Thm:Bernstein-free-bdry}]
Letting $R\ra\infty$ in the curvature estimate \eqref{esti:curvature-free-bdry}, we complete the proof.
\end{proof}

%========

\bibliography{BibTemplate.bib}
\bibliographystyle{amsplain}

\end{document}